\newtheorem{theorem}{Theorem}[section]
\newtheorem{lemma}[theorem]{Lemma}
\newtheorem{proposition}[theorem]{Proposition}
\newtheorem{remark}[theorem]{Remark}
\definecolor{OliveGreen}{rgb}{0.33, 0.42, 0.18}
\definecolor{Plum}{rgb}{0.8, 0.6, 0.8}
\newcommandx{\unsure}[2][1=]{\todo[linecolor=red,backgroundcolor=red!25,bordercolor=red,#1]{#2}}
\newcommandx{\change}[2][1=]{\todo[linecolor=blue,backgroundcolor=blue!25,bordercolor=blue,#1]{#2}}
\newcommandx{\info}[2][1=]{\todo[linecolor=OliveGreen,backgroundcolor=OliveGreen!25,bordercolor=OliveGreen,#1]{#2}}
\newcommandx{\improvement}[2][1=]{\todo[linecolor=Plum,backgroundcolor=Plum!25,bordercolor=Plum,#1]{#2}}
\newcommandx{\thiswillnotshow}[2][1=]{\todo[disable,#1]{#2}}
\newcommand{\dpar}[2]{\dfrac{\partial #1}{\partial #2}}
 \newcommand{\R}{\mathbb R}
 \newcommand{\Z}{\mathbb Z}
 \newcommand{\N}{\mathbb N}
\renewcommand{\SS}{\mathcal{\mathbb S}}
\newcommand{\bbf}{{\mathbf {f}}}
\newcommand{\bu}{\mathbf{u}}
\newcommand{\bv}{\mathbf{v}}
\newcommand{\hf}{\hat{\mathbf{f}}}
\newcommand{\bx}{\mathbf{x}}
\newcommand{\remi}[1]{{#1}}
\newcommand{\remI}[1]{{#1}}
\newcommand{\remiIII}[1]{\textcolor{green}{#1}}
\begin{document}
\title{A combination of Residual Distribution   and the Active Flux formulations or a new class of schemes that can combine several writings of the same hyperbolic problem: application to the 1D Euler equations}
\author{R. Abgrall\\
Institute of Mathematics, University of Z\"urich\\
Winterhurerstrasse 190, CH 8057 Z\"urich \\
email: remi.abgrall@math.uzh.ch
}
\date{\today}
\maketitle
\begin{abstract}
\remi{We show how to combine in a natural way (i.e. without any test nor switch) the conservative and non conservative formulations of an hyperbolic system that has a conservative form. This is inspired from two different class of schemes: the Residual Distribution one \cite{MR4090481}, and the Active Flux formulations \cite{AF1, AF3, AF4,AF5,RoeAF}. The solution is globally continuous, and as in the active flux method, described by a combination of point values and average values.
Unlike the "classical" active flux methods, the meaning of the point-wise and cell average degrees of freedom is different, and hence follow different form of PDEs: it is a conservative version of the cell average, and a possibly non conservative one for the points.
This new class of scheme is proved to satisfy a Lax-Wendroff like theorem. We also develop a method to perform non linear stability. We illustrate the behaviour on several benchmarks, some quite challenging.  }
\end{abstract}


\section{Introduction}
The notion of conservation is essential in the numerical approximation of hyperbolic systems of conservation: if it is violated, there is no chance, in practice, to compute the right weak solution in the limit of mesh refinement. This statement is known since the celebrated work of Lax and Wendroff \cite{laxwendroff}, and what happens when conservation is violated has been discussed by Hou and Le Floch \cite{hou}. This conservation requirement imposes the use of the conservation form of the system. However, in many practical situations, this is not really the one one would like to deal with, since in addition to conservation constraints, one also seeks for the preservation of additional features, like contacts for fluid mechanics, or entropy decrease for shocks.

In this paper, we are interested in compressible fluid dynamics. Several authors have already considered the problem of the correct discretisation of the non conservative form of the system. In the purely Lagrangian framework, when the system is described by the momentum equation and the Gibbs equality, this has been done since decades: one can consider the seminal work of Wilkins, to begin with, and the problem is still of interest: one can consider \cite{Rieben,svetlana1,svetlana2} where high order is sought for. In the case of the Eulerian formulation, there are less work. One can mention \cite{herbin,despre,ksenya} where staggered meshes are used, the thermodynamic variables are localised in the cells, while the kinetic ones are localised at the grid points, or \cite{paola} where a non conservative formulation with correction is used from scratch. The first two references show how to construct at most second order scheme, while the last one shows this for any order. All constructions are quite involved in term of algebra, because one has to transfert information from the original grid and the staggered one.

 In this paper, we aim at showing how the notion of conservation introduced in the residual distribution framework \cite{conservationRD} is flexible enough to allow to deal directly with the non conservative form of the system, while the correct solutions are obtained in the limit of mesh refinement. More precisely, we show how to to deal both with the conservative and non conservative form of the PDE, without any switch, as it was the case in \cite{karni}. We illustrate our strategy on several versions of the non conservative form, and provide first, second order and third order accurate version of the scheme.  More than a particular example, we describe  a general strategy which is quite simple.
 The systems on which we will work are descriptions of the Euler equations for fluid mechanics:
 \begin{itemize}
 \item The conservation one:
 \begin{equation}
 \label{eq:conservative}\dpar{}{t}\begin{pmatrix}\rho \\ \rho u \\ E\end{pmatrix}+\dpar{}{x}\begin{pmatrix} \rho u\\ \rho u^2+p\\ u(E+p)\end{pmatrix}=0\end{equation}
 \item the primitive formulation:
 \begin{equation}\label{eq:primitive}\dpar{}{t}\begin{pmatrix}\rho \\ u \\ p\end{pmatrix}+\begin{pmatrix}\dpar{\rho u}{x}\\
 u\dpar{u}{x}+\frac{1}{\rho}\dpar{p}{x}\\
 u\dpar{p}{x}+(e+p)\dpar{u}{x}
 \end{pmatrix}=0
 \end{equation}
 \item The "entropy" formulation:
 \begin{equation}\label{eq:entropy}
\dpar{}{t}\begin{pmatrix}p \\ u \\ s\end{pmatrix}+\begin{pmatrix}u\dpar{p}{x}+(e+p)\dpar{u}{x}\\
 u\dpar{u}{x}+\frac{1}{\rho}\dpar{p}{x}\\
 u\dpar{s}{x}
 \end{pmatrix}=0
 \end{equation}
 \end{itemize}
 where as usual $\rho$ is the density, $u$ the velocity, $p$ the pressure, $E=e+\tfrac{1}{2}\rho u^2$ is the total energy, $e=(\gamma-1)p$ and $s=\log (p)-\gamma \log(\rho)$ is the entropy. The ratio of specific heats, $\gamma$ is supposed to be constant here, mostly for simplicity.
 
 \remi{This paper has several source of inspirations. The first one is the residual distribution (RD) framework, and in particular \cite{conservationRD}. The second one is the family of active flux \cite{AF1,AF2,AF3,AF4,AF5}, where the solution is represented by a cell average and point values. The conservation is recovered from how the average is updated. Here the difference comes from the fact that in addition several forms of the same system can be conserved, as \eqref{eq:conservative}, \eqref{eq:primitive},  \eqref{eq:entropy} for the point value update while a Lax Wendroff like result can still be shown. If the same system were used, both for the cell average and the point values, this would easily fit into the RD framework, using the structure of the polynomial reconstruction. The difference with Active Flux is that we use only the representation of the solution within one cell, and not a fancy flux evaluation. Another difference is about the way the solution is evolved in time: the AF method uses the method of characteristics to evolve the point value, while here we rely on more standard Runge-Kutta methods.}

The format of the paper is \remiIII{ as follows}. In a first part, we explain the general principles of our method, and justify why, under the assumptions made on the numerical sequence for  the Lax-Wendroff theorem  (boundedness in $L^\infty$ and strong convergence in a $L^p$, $p\geq 1$,  of a subsequence toward a $v\in L^p$, then this $v$ is a weak solution of the problem), \remi{we can also show the convergence of a subsequence to a weak solution of the problem, under the same assumptions.}
{ In the second part, we describe several discretisations of the method, and in  a third part we provide several simulations to illustrate the method.}

In this paper, the letter $C$ denotes a constant, and we uses the standard "algebra", for example $C\times C=C$,  $C+C=C$, or $\alpha C=C$ for any constant $\alpha\in \R$.
\section{The method}
\subsection{Principle}
We consider the problem
\begin{subequations}
\label{eq:1}
\begin{equation}
\label{eq:1:1}
\dpar{\bu}{t}+\dpar{\bbf(\bu)}{x}=0, \qquad x\in \R
\end{equation}
with the initial condition 
\begin{equation}
\label{eq:1:2}
\bu(x,0)=\bu_0(x), \qquad x\in \R
\end{equation}
Here $\bu\in \mathcal{D}_\bu\subset \R^p$. For smooth solutions, we also consider an equivalent formulation in the form
\begin{equation}
\label{eq:1:3}
\dpar{\bv}{t}+J\dpar{\bv}{x}=0
\end{equation}
\end{subequations}
where $\bv=\Psi(\bu)\in \mathcal{D}_\bv$ and $\Psi:\mathcal{D}_\bu\rightarrow \mathcal{D}_\bv$ is assumed to be one-to-one \remI{and $C^1$ (as well as the inverse function).}
For example, if \eqref{eq:1} corresponds to \eqref{eq:conservative}, then
$$\mathcal{D}_\bu=\{ (\rho, \rho u, E)\in \R^3 \text{ such that }\rho>0 \text{ and } E-\frac{1}{2}\rho u^2>0\}.$$
If \eqref{eq:1:3} corresponds to \eqref{eq:primitive}, then
$$\mathcal{D}_\bv=\{(\rho, u, p)\text{ such that }\rho>0 \text{ and } p>0\}$$ and (for a perfect gas) the mapping $\Psi$ corresponds to $(\rho, \rho u, E)\mapsto \big(\rho, u, p=(\gamma-1)\big ( E-\tfrac{1}{2}\rho u^2\big ) \big ),$
while
$$J=\begin{pmatrix}
u & \rho &0\\
0& u & \frac{1}{\rho}\\
0& (e+p) & u
\end{pmatrix}
.$$ For \eqref{eq:entropy}, 
$$\mathcal{D}_\bu=\{(p,u,s)\in \R^3, p>0 \}.$$ 
More generally, we have $J=\big [\nabla_\bu\big (\Psi^{-1}\big )\big ]\nabla_\bu \bbf$.

The idea is to discretise \emph{simultaneously} \eqref{eq:1:1} and \eqref{eq:1:3}.
Forgetting the possible boundary conditions, $\R$ is divided into non overlapping intervals $K_{j+1/2}=[x_{j}, x_{j+1}]$ where $x_j<x_{j+1}$ for all $j\in \Z$. 
We set $\Delta_{j+1/2}=x_{j+1}-x_j$ and $\Delta =\max_j\Delta_{j+1/2}$. At the grid points, we will estimate $\bv_j$ in time, while in the cells we will estimate the average value $$\bar \bu_{j+1/2}=\frac{1}{\Delta_{j+1/2}}\int_{x_j}^{x_{j+1}} \bu(x) \; dx$$
 When needed, we have $\bu_j=\Psi^{-1}(\bv_j)$, however $\bar \bv_{j+1/2}=\Psi(\bar \bu_{j+1/2})$ is meaningless since the $\Psi$ does not commute with the average.
 
In $K_{j+1/2}$ any continuous function can be represented by $\bu_j=\bu(x_j)$, $\bu_{j+1}=\bu(x_{j+1})$ and $\bar \bu_{j+1/2}$: one can consider the polynomial $R_\bu$ defined on $K_{j+1/2}$ by
$$\big (R_\bu\big )_{|K_{j+1/2}}(x)=\bu_j L_{j+1/2}^0+\bu_{j+1}L_{j+1/2}^1+\bar \bu_{j+1/2} L_{j+1/2}^{1/2},$$
with
$$L_{j+1/2}^\xi(x)=\ell_\xi\big (\frac{x-x_{j}}{x_{j+1}-x_j}\big )$$
and
$$\ell_0(s)=(1-s)(1-3s), \quad \ell_1(s)=s(3s-2), \qquad \ell_{1/2}(x)=6s(1-s).$$
We see that 
\begin{equation*}
\begin{split}
\ell_0(0)=1, & \quad \ell_0(1)=0, \quad \int_0^1 \ell_0(s) ds=0\\
\ell_1(1)=1, &  \quad \ell_1(0)=0, \quad \int_0^1\ell_1(s) ds=0\\
  \ell_{1/2}(0)=0,& \quad \ell_{1/2}(1)=0, \quad \int_0^1\ell_{1/2}(s)\; ds=1.
\end{split}
\end{equation*}

\bigskip
How to  evolve $\bar \bu_{j+1/2}$ following \eqref{eq:1:1} and $v_{j}$ following \eqref{eq:1:3} in time?
The solution is simple for the average value: since
$$
\Delta_{j+1/2}  \; \dfrac{d\bar \bu_{j+1/2}}{dt}+  \bbf(\bu_{j+1}(t))-\bbf(\bu_{j}(t))  =0,$$
we simply take
\begin{subequations}
\label{scheme}
\begin{equation}
\label{scheme:1}
\Delta_{j+1/2}\dfrac{d\bar \bu_{j+1/2}}{dt}+\big ( \hf_{j+1/2}-\hf_{j-1/2}\big )=0
\end{equation}
where $\hf_{j+1/2}$ is a \remi{consistent} numerical flux that depends continuously of its arguments. In practice, \remi{since the approximation is continuous}, we take
\begin{equation}
\label{scheme:2}
\hf_{j+1/2}=\bbf(\bu_j)\remi{=\bbf\big (\Psi^{-1}(\bv_j)\big ).}
\end{equation}
For $\bv$, we assume a semi-discrete scheme of the following form:
{\begin{equation}
\label{scheme:3}
 \dfrac{d\bv_j}{dt}+ \overleftarrow{\Phi}^\bv_{j+1/2}+\overrightarrow{\Phi}^\bv_{j-1/2}=0
\end{equation} }
such that\remi{ $\overleftarrow{\Phi}^\bv_{j+1/2}+\overrightarrow{\Phi}^\bv_{j+1/2}$} is a consistent approximation of \remI{$J\dpar{\bv}{x}$ in $K_{j+1/2}$}. We will give examples later, for now we only describe the principles. In general the residuals $\overleftarrow{\Phi}^\bv_{j+1/2}$ and $\overrightarrow{\Phi}^\bv_{j-1/2}$ need to depend on some $\bv_l$ and $\bv_{l+1/2}\approx \bv(x_{l+1/2})$. We can recover the missing informations at the half points in two steps:
\begin{enumerate}
\item From $\bv_j$, we can get $\bu_j=\Psi(\bv_j)$,
\item Then in $[x_{j}, x_{j+1}]$ we approximate $\bu$ by
$$R_\bu(x)=\bu_j\ell_0\big ( \frac{x-x_j}{\Delta_{j+1/2}}\big ) +\bar \bu_{j+1/2}\ell_{1/2}\big ( \frac{x-x_j}{\Delta_{j+1/2}}\big )+\bu_{j+1}
\ell_1\big ( \frac{x-x_j}{\Delta_{j+1/2}}\big ),$$
which enable to provide $\bu_{j+1/2}:=R_\bu(x_{j+1/2})$, i.e
\begin{equation}
\label{scheme:4} \bu_{j+1/2}=\frac{3}{2} \bar \bu_{j+1/2}-\frac{\bu_j+\bu_{j+1}}{4}.
\end{equation}
Note that this relation is simply
$\bar \bu_{j+1/2}=\tfrac{1}{6}\big ( \bu_j+\bu_{j+1}+4 \bu_{j+1/2}\big ),$
i.e. Simpson's formula.
\item Finally, we state
$$\bv_{j+1/2}=\Psi^{-1}(R_\bu(x_{j+1/2}))$$
\end{enumerate}
In some situations, described later, we will also make the approximation:
$$\bv_{j+1/2}=\Psi^{-1}(\bar \bu_{j+1/2})$$
which is nevertheless consistent (but only first order accurate).
\end{subequations}
As written above, the fluctuations $\overleftarrow{\Phi}^\bv_{j+1/2}$ and \remiIII{$\overrightarrow{\Phi}^\bv_{j+1/2}$} are functionals of the form $\Phi\big ( \{\bv_l, \bv_{l+1/2}\}, j-p\leq l\leq j+p)$ for some fixed value of $p$. We will make the following assumptions:
\begin{subequations}\label{consistency}
\begin{enumerate}
\item Lipschiz continuity:
There exists $C$ that depends only on $\bu^0$ and $T$ such that for any $j\in \Z$
\begin{equation}\label{consistency:lipschitz}
\Vert \Phi\big ( \{\bv_l, \bv_{l+1/2}\}, j-p\leq l\leq j+p)\Vert \leq \frac{C}{\Delta_{j+1/2}}\bigg ( \sum_{l=-p}^p \Vert \bv_l-\bv_{l+1/2}\Vert \bigg ),\end{equation}
\item Consistency. Setting $\bv^h=R_\bu$, 
\begin{equation}\label{consistency:consistency}\sum_{j\in \Z}\int_{K_{j+1/2}} \Vert \overleftarrow{\Phi}^\bv_{j+1/2}+\overrightarrow{\Phi}^\bv_{j+1/2}-J\dpar{\bv^h}{x}\Vert \; d\bx \leq C \; \Delta .\end{equation}
\item Regular mesh: the meshes are regular in the \remiIII{finite element sense}.
\end{enumerate}
\end{subequations}
The ODE systems \eqref{scheme} are integrated by a standard ODE solver. We will choose the Euler forward method, and the second order and third order SSP Runge-Kutta scheme.

\subsection{Analysis of the method}
In order to explain why the method can work, we will choose the simplest ODE integrator, namely the Euler forward method. The general case can be done in the same way, with more technical details.
So we integrate \eqref{scheme} by:
\begin{equation}
\label{schemedisc:1}
\bar \bu_{j+1/2}^{n+1}=\bar \bu_{j+1/2}^n-\frac{\Delta t_n}{\Delta_{j+1/2}} \big (\underbrace{ \bbf(\bu_{j+1}^n)-\bbf(\bu_j^n)}_{:=\delta_{j+1/2} \bbf}\big ),
\end{equation}
and
\begin{equation}
\label{schemedisc:2}
\bv_j^{n+1}=\bv_j^n -{\Delta t_n }\big ( \overleftarrow{\Phi}^\bv_{j+1/2}+\overrightarrow{\Phi}^\bv_{j-1/2}\big )
\end{equation}
Setting $\Delta_j$ as the average of $\Delta_{j+1/2}$ and $\Delta_{j-1/2}$, we rewrite \eqref{schemedisc:2} as 
\begin{equation}
\label{schemedisc:3:0}
\bv_j^{n+1}=\bv_j^n -\frac{\Delta t_n }{\Delta_j}\delta_x\bv_j
\end{equation}
and we note that, using the assumption \eqref{consistency:lipschitz} as well as the fact that the mesh is shape regular, that there exists $C>0$ depending only on $\bu^0$ and $T$ such that
$$\remiIII{\Vert \delta_x \bv_j}\Vert \leq C \sum_{j=p-1}^{p+1} \Vert \bv_j-\bv_{j+1/2}\Vert .$$
{
Using the transformation \eqref{scheme:4}, from \eqref{schemedisc:2}, we can evaluate 
$\bu_j^{n+1}=\Psi(\bv_j^{n+1})$, \remi{and then write the update of $\bu$ as}
\begin{equation}\label{schemedisc:3}\remi{\Delta_j \big ( \bu_j^{n+1}-\bu_j^n\big ) +\Delta t_n  \delta_x\bu_{j}=0}\end{equation}
\remi{where }
$$\delta_x\bu_{j}=\dfrac{\Delta_j}{\Delta t_n}\bigg ( \Psi(\bv_j^n-\frac{\Delta t_n}{\Delta_j} \delta \bv_j)-\Psi(\bv_j^n) \bigg ), $$
which, thanks to the assumptions we have made on $\Psi$ satisfies
$$\Vert \delta_x \bu_{j+1/2}\Vert \leq C \Vert \delta_x \bv_j\Vert\leq C\sum_{j=-p}^{p}\Vert \bv_{j+l}-\bv_{j+l+1}\Vert \leq C \sum_{l=-p}^{p}\Vert \bu_{j+l}-\bu_{j+1+l}\Vert$$ for some constants that depends on the gradient of $\Psi$ and the  maximum of the $\bv_i^n$ for $i\in \Z$.}

{ To explain the validity of the approximation,  we start by the Simpson formula, which is exact for quadratic polynomials:
$$
\int_{x_{j}}^{x_{j+1}} f(x) \; dx\approx \dfrac{\Delta_{j+1/2}}{6}\big ( f(x_j)+4 f(x_{j+1/2})+f(x_{j+1})\big ).$$
From the point values $\bu_j$, $\bu_{j+1}$ and $\bu_{j+1/2}$ at times $t_n$ and $t_{n+1}$, we define the quadratic Lagrange interpolant $R_{\bu^n}$ and $R_{\bu^{n+1}}$
and then write
$$\int_{x_{j}}^{x_{j+1}} \remiIII{ \varphi(x,t)} \big ( R_{\bu^{n+1}}-R_{\bu^{n}}\big ) \; dx\approx
\dfrac{\Delta_{j+1/2}}{6}\big (\varphi_{j+1}(\bu_{j+1}^{n+1}-\bu_{j+1}^n)+4\varphi_{j+1/2} (\bu_{j+1/2}^{n+1}-\bu_{j+1/2}^n)+ \varphi_{j}(\bu_{j}^{n+1}-\bu_j^n)\bigg ).$$
Accuracy is not an issue here.
Using \eqref{schemedisc:2} and \eqref{schemedisc:3}, setting $\delta_j^{n+1/2}\bu=\bu_j^{n+1}-\bu_j^n$, we get
\begin{equation*}
\begin{split}
\Sigma:=\sum\limits _{[x_j, x_{j+1}], j\in \Z}\dfrac{\Delta_{j+1/2}}{6}\bigg (& \varphi_{j+1}^n \delta_j^{n+1/2} \bu+4\varphi_{j+1/2}^n \delta_{j+1/2}^{n+1/2}\bu+
 \varphi_{j}^n\delta_j^{n+1/2}\bu\bigg )\\ &=
\sum\limits _{[x_j, x_{j+1}], j\in \Z}\dfrac{\Delta_{j+1/2}}{6}\bigg (\varphi_{j+1}^n \delta_{j+1}^{n+1/2}\bu\\
&\qquad \qquad \qquad \qquad + 4\varphi_{j+1/2}^n\big ( \frac{3}{2} \delta_{j+1/2}^{n+1/2} \bar\bu-\frac{\delta_{j+1}^{n+1/2} \bu+\delta_j^{n+1/2} \bu}{4} \big )+ \varphi_{j}^n \delta_{j}^{n+1/2}\bu\bigg )\\
&=\sum\limits _{[x_j, x_{j+1}], j\in \Z}\Delta_{j+1/2} \varphi_{j+1/2}^n\delta_{j+1/2}^{n+1/2}\bar\bu \\
&\qquad \qquad \qquad \qquad+ \underbrace{\sum\limits_{j\in \Z} \frac{\delta_{j}^{n+1/2} \bu}{6}
\bigg \{\Delta_{j+1/2}\big ( \varphi_j-\varphi_{j+1/2}\big ) +\Delta_{j-1/2}\big (\varphi_j-\varphi_{j-1/2}\big )\bigg \}}_{S_n}.
\end{split}
\end{equation*}
so that we get, using \eqref{schemedisc:3}
\begin{equation}
\label{Master}
\begin{split}
\sum\limits_{n\in \N}\sum\limits _{[x_j, x_{j+1}], j\in \Z}\dfrac{\Delta_{j+1/2}}{6}\bigg (& \varphi_{j+1}^n \delta_j^{n+1/2} \bu+4\varphi_{j+1/2}^n \delta_{j+1/2}^{n+1/2}\bu+
 \varphi_{j}^n\delta_j^{n+1/2}\bu\bigg )\remiIII{-}\sum\limits_{n\in \N} \Delta t_n\sum\limits _{[x_j, x_{j+1}], j\in \Z}
 \varphi_{j+1/2}^n\delta_{j+1/2}\bbf\\
 &-\sum\limits_{n\in \N} S_n=0
 \end{split}
 \end{equation}
Then we  use again \eqref{schemedisc:3}, use the  fact that the mesh is regular, and observe that
$$S_n= \Delta t_n \sum_j \Delta_j \; \delta_x\bu_j+O(\Delta^3).$$  In appendix \ref{appendix}, we will show that in the limit, the contribution of the $S_n$ term  will converges towards  $0$, while the first term of \eqref{Master} will converge to 
$$\int_0^{+\infty}\int_\R \dpar{\varphi}{t}\bu \; dx dt-\int_\R \bu_0 \; dx$$ while the second term will converge towards
$$\int_0^{+\infty}\int_{\R} \dpar{\varphi}{x} \bbf(\bu)\; d\bx.$$ 

This will be shown, 
 using classical arguments, in  the appendix \ref{appendix}, so that  we have   }
\begin{proposition}\label{LxW} We assume that the mesh is regular: there exists $\alpha$ and $\beta$ such that $\alpha\leq \Delta_{j+1/2}/\Delta_{j-1/2}\leq \beta$.
If $\max\limits_{j\in \Z} \Vert \bu_j^n\Vert_\infty$ and $\max\limits_{j\in \Z} \Vert v_{j+1/2}^n\Vert_\infty$ are bounded, and if a subsequence of $\bu_\Delta$ converges in $L^1$ towards $\bu$, then $\bu$ is a weak solution of the \remiIII{problem}.
\end{proposition}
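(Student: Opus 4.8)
The plan is to pass to the limit in the exact algebraic identity \eqref{Master}, which holds for every $\varphi\in C^1_c(\R\times[0,+\infty))$ and every admissible mesh and which already encodes a discrete space--time integration by parts. Write it as $T_1-T_2-T_3=0$, where $T_1$ is the first (discrete $\partial_t\bu$) sum, $T_2=\sum_n\Delta t_n\sum_j\varphi^n_{j+1/2}\,\delta_{j+1/2}\bbf$ is the conservative flux sum, and $T_3=\sum_n S_n$ is the remainder. The first thing I would record are the uniform bounds that make the three sums well behaved: by hypothesis $\max_j\|\bu_j^n\|_\infty$ and $\max_j\|v_{j+1/2}^n\|_\infty$ are bounded, hence so are $\max_j\|\bu_{j+1/2}^n\|_\infty$ and $\max_j\|\bar\bu_{j+1/2}^n\|_\infty$ (through the $C^1$ change of variables and Simpson's identity \eqref{scheme:4}), so $\|R_{\bu^n}\|_{L^\infty}$ is bounded uniformly in $n$ and $\Delta$. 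Moreover the piecewise constant function $\bu_\Delta$ carrying the cell averages and the piecewise quadratic reconstruction share the same cell averages and are uniformly bounded, so they differ by $O(\Delta)$ in $L^1_{\mathrm{loc}}$; the $L^1$-convergent subsequence may therefore be used interchangeably with the reconstruction.

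For $T_1$ I would apply Abel summation in $n$ to move the time increment onto $\varphi$. Compact support in time kills the far--future boundary term, leaving $-\sum_n\sum_j\int_{K_{j+1/2}}(\varphi^n-\varphi^{n-1})R_{\bu^n}\,dx-\sum_j\int_{K_{j+1/2}}\varphi^0 R_{\bu^0}\,dx$, up to the $O(\Delta^3)$ per cell error between the Simpson weights and $\int_{K_{j+1/2}}\varphi(R_{\bu^{n+1}}-R_{\bu^n})\,dx$. On each cell, since $R_{\bu^n}-\bar\bu^n_{j+1/2}$ has zero average one may subtract the cell mean of the test factor, so the discrepancy between using $R_{\bu^n}$ and $\bar\bu^n_{j+1/2}$ is $O(\Delta^2)$ per cell (smoothness of $\varphi$, boundedness of $R_{\bu^n}$) and hence $O(\Delta)$ after summation over the compact support. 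What remains is a Riemann sum for $-\int_0^{+\infty}\!\!\int_\R\partial_t\varphi\,\bu_\Delta\,dx\,dt-\int_\R\varphi(\cdot,0)\,\bu_\Delta^0\,dx$, and along the $L^1$ subsequence (with the consistent initialisation $\bar\bu^0_{j+1/2}=\tfrac1{\Delta_{j+1/2}}\int_{K_{j+1/2}}\bu_0$) this converges to $-\int_0^{+\infty}\!\!\int_\R\partial_t\varphi\,\bu\,dx\,dt-\int_\R\varphi(\cdot,0)\,\bu_0\,dx$.

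For $T_2$ this is exactly the classical Lax--Wendroff computation: summation by parts in $j$ gives $T_2=-\sum_n\Delta t_n\sum_j\bbf(\bu_j^n)\,(\varphi^n_{j+1/2}-\varphi^n_{j-1/2})$, where the consistent flux choice \eqref{scheme:2}, $\hf_{j+1/2}=\bbf(\bu_j)$, is used. Since $\varphi^n_{j+1/2}-\varphi^n_{j-1/2}=\Delta_j\,\partial_x\varphi(x_j,t_n)+O(\Delta^2)$ and $\bbf(\bu_j^n)$ is bounded, this is a Riemann sum for $-\int_0^{+\infty}\!\!\int_\R\bbf(\bu_\Delta)\,\partial_x\varphi$; $L^1$ convergence of $\bu_\Delta$ yields a.e.\ convergence of a further subsequence, so continuity of $\bbf$ together with the $L^\infty$ bound and dominated convergence give $\bbf(\bu_\Delta)\to\bbf(\bu)$ in $L^1_{\mathrm{loc}}$, whence $T_2\to-\int_0^{+\infty}\!\!\int_\R\bbf(\bu)\,\partial_x\varphi$. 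Combined with the limit of $T_1$ and with $T_3\to0$, the identity $T_1-T_2-T_3=0$ becomes the weak formulation $\int_0^{+\infty}\!\!\int_\R(\bu\,\partial_t\varphi+\bbf(\bu)\,\partial_x\varphi)\,dx\,dt+\int_\R\bu_0\,\varphi(\cdot,0)\,dx=0$.

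The main obstacle is $T_3\to0$, and it is the only place where more than consistency of the flux is needed. From \eqref{schemedisc:3} one has $\delta_j^{n+1/2}\bu=-\tfrac{\Delta t_n}{\Delta_j}\delta_x\bu_j$ with $\|\delta_x\bu_j\|\le C\sum_{|l|\le p}\|\bu^n_{j+l}-\bu^n_{j+l+1}\|$ by the Lipschitz assumption \eqref{consistency:lipschitz} and mesh regularity, while $S_n=\sum_j\tfrac{\delta_j^{n+1/2}\bu}{6}\,b_j$ with $b_j=\Delta_{j+1/2}(\varphi_j-\varphi_{j+1/2})+\Delta_{j-1/2}(\varphi_j-\varphi_{j-1/2})$. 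The naive estimate $|b_j|\le C\Delta_j^2$ is too weak: it only yields $\sum_n|S_n|\le C\,T\,\Delta\sup_n\mathrm{TV}$, which would require a uniform bounded--variation bound not present in the hypotheses. The key point, which I would isolate as a lemma (and which is what the appendix carries out), is that for a smooth $\varphi$ and a mesh that is regular in the finite--element sense one actually has $|b_j|\le C\Delta_j^3$: on a uniform mesh $b_j=-\tfrac14\Delta_j^3\partial_{xx}\varphi(x_j)+O(\Delta_j^5)$ by Taylor expansion, and the mesh--variation correction $\tfrac12(\Delta_{j-1/2}^2-\Delta_{j+1/2}^2)\partial_x\varphi$ is itself $O(\Delta^3)$ when the spacing varies smoothly. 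With this, $|S_n|\le C\Delta t_n\sum_j\|\delta_x\bu_j\|\,\Delta_j^2\le C\,\Delta t_n\,\|\bu\|_\infty\,\Delta\sum_j\Delta_j\le C\,\Delta t_n\,\Delta$ over the compact support of $\varphi$, using only the $L^\infty$ bound, so $\sum_n|S_n|\le C\,T\,\Delta\to0$. Everything else is routine Riemann--sum bookkeeping, deferred to the appendix as the authors indicate.
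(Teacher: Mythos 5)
Your decomposition of \eqref{Master} into $T_1-T_2-T_3$ and your handling of $T_1$ and $T_2$ follow the same route as the paper's appendix, and those parts are fine. The genuine gap is in your treatment of $T_3=\sum_n S_n$. Your key claim $|b_j|\le C\Delta_j^3$ does not follow from the hypotheses of the proposition: Taylor expansion gives
\begin{equation*}
b_j=\Delta_{j+1/2}\big(\varphi_j-\varphi_{j+1/2}\big)+\Delta_{j-1/2}\big(\varphi_j-\varphi_{j-1/2}\big)
=\tfrac12\big(\Delta_{j-1/2}^2-\Delta_{j+1/2}^2\big)\,\partial_x\varphi(x_j,t_n)+O(\Delta^3),
\end{equation*}
and the proposition only assumes $\alpha\le\Delta_{j+1/2}/\Delta_{j-1/2}\le\beta$, under which the leading term is genuinely $O(\Delta^2)$. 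Your phrase ``when the spacing varies smoothly'' smuggles in an extra hypothesis ($\Delta_{j+1/2}-\Delta_{j-1/2}=O(\Delta^2)$) that is not available and is in fact violated by the paper's own test in Appendix \ref{sec:irreg}, where adjacent cell sizes alternate with ratio $3$, so $|\Delta_{j-1/2}^2-\Delta_{j+1/2}^2|\sim\Delta^2$ on every cell. With $|b_j|=O(\Delta^2)$ and only the $L^\infty$ bound on $\delta_x\bu_j$, your estimate yields $|S_n|\le C\Delta t_n$ and $\sum_n|S_n|=O(1)$, i.e.\ exactly the ``too weak'' situation you describe; so your proof only covers uniform (or smoothly graded) meshes, not the statement as given.

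The paper closes this step by a different mechanism, which is also what the appendix actually proves (not the $O(\Delta_j^3)$ expansion you attribute to it): Lemma \ref{weakBV}, a weak-BV-type compactness estimate. Using the $L^\infty$ bound and the strong $L^1$ (hence, with boundedness, $L^2_{loc}$) convergence of the subsequence, it identifies the weak-$\star$ limits of $u_\Delta$, $\bar u_\Delta$ and of their squares and concludes
\begin{equation*}
\sum_{n}\Delta t_n\sum_{j}\Delta_{j+1/2}\Big(\vert u_j^n-\bar u_{j+1/2}^n\vert+\vert u_{j+1}^n-\bar u_{j+1/2}^n\vert+\vert u_j^n-u_{j+1}^n\vert\Big)\longrightarrow 0 .
\end{equation*}
Then the crude bound $|b_j|\le C\,\Delta\,\Delta_j$ (first derivative of $\varphi$ only), together with $\delta_j^{n+1/2}\bu=-\tfrac{\Delta t_n}{\Delta_j}\delta_x\bu_j$ and the Lipschitz assumption \eqref{consistency:lipschitz} --- which bounds $\Vert\delta_x\bu_j\Vert$ by the local differences $\Vert\bu_{j+l}-\bar\bu_{j+l+1/2}\Vert$ rather than merely by $\Vert\bu\Vert_\infty$ --- turns $\sum_n|S_n|$ into a constant times precisely this weak-BV quantity, which vanishes in the limit. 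So what is needed is neither a uniform BV bound (as you suggest) nor a finer Taylor expansion of $b_j$, but this weak-BV estimate extracted from the convergence hypothesis itself; to repair your proof on general regular meshes you should replace the $O(\Delta_j^3)$ claim by Lemma \ref{weakBV} (or prove an equivalent statement) and use the Lipschitz structure of the fluctuations at that point.
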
 
\begin{remark}
\remI{Indeed, the definition of a precise $\Delta_j$ is not really needed, and we come back to this in the next section.} \remI{What is needed is a spatial scale that relates the updates in $\bv$ and $\bu$ in an incremental form of the finite difference type. This is why the asssumption of mesh regularity is fundemental.}
\end{remark}

\section{Some examples of discretisation}
%
%
We list  possible choices: for $\dpar{\bv}{t}+J\dpar{\bv}{x}=0,$ where $J$ is the Jacobian of $\bbf$ with respect to $\bu$; they have been used in the numerical tests.  The question here is to define
$\overleftarrow{\Phi}_{j+1/2}$ and $\overrightarrow{\Phi}_{j+1/2}$ that are the contributions of \remi{$K_{j\pm 1/2}$} to $J\dpar{\bv}{x}$ so that
$$J\dpar{\bv}{x}(x_i)\approx \overleftarrow{\Phi}^\bv_{j+1/2}+\remi{\overrightarrow{\Phi}^\bv_{j-1/2}}.$$ 
We follow the work of Iserle \cite{iserle} who gives all the possible schemes that guaranty a stable (in $L^2$) semi-discretisation of the convection equation, for a regular grid which we assume. The only difference in his notations and ours is that the grid on which are defined the approximation of the derivative is made of  the mesh points $x_j$ and the half points $x_{j+1/2}$.

{The first list of examples have an upwind flavour:
\begin{equation}\label{fluctuation}
\overleftarrow{\Phi}^\bv_{j+1/2}=\big (\remi{ J(\bv_j)}\big )^- \frac{\delta^-_{j}\bv}{\Delta_{j+1/2}/2} \text{ and } 
\overrightarrow{\Phi}^\bv_{j+1/2}=\big ( J(\bv_{j+1})\big )^+\frac{\delta_{j+1}^+\bv}{\Delta_{j+1/2}/2}
\end{equation}}
where $\delta^\pm_j $ is an approximation of $\Delta_{j+1/2}  \dpar{v}{x}$ obtained from \cite{iserle}\footnote{\remi{The author works on $\dpar{u}{t}=-\dpar{u}{x}$ which is a bit confusing w.r.t. to "modern" habits. It is true that British drive left.}}:
\begin{itemize}
\item First order approximation: we take 
\begin{equation}\label{S1}\delta_j^+\bv=\bv_{j}-\bv_{j-1/2}, \qquad \delta_j^-\bv=\bv_{j+1/2}-\bv_j.\end{equation}
\item Second order: we take
\begin{equation}\label{S2bis}
\begin{split}
\delta_j^-\bv&=-\frac{3}{2}\bv_j+2\bv_{j+1/2}-\frac{\bv_{j+1}}{2}\\
\delta_j^+\bv&=\frac{\bv_{j-1}}{2}-2\bv_{j-1/2}+\frac{3}{2}\bv_{j}
\end{split}
\end{equation}
\item \remi{Third order}:  We take
\begin{equation}\label{S2}
\begin{split}
\remiIII{\delta_j^-=-\frac{v_{i+1}}{6}+v_{i+1/2}-\frac{v_i}{2}-\frac{v_{i-1/2}}{3}},\\
\remiIII{ \delta_j^+=\frac{v_{i-1}}{6}-v_{i-1/2}+\frac{v_i}{2}+\frac{v_{i+1/2}}{3}}
\end{split}
\end{equation}
\item \remi{Fourth order}: The fully centered scheme would be
$$\delta_j^\pm \bv=\remi{\dfrac{\bv_{j+1}-\bv_{j-1}}{12}+2\dfrac{\bv_{j+1/2}-\bv_{j-1/2}}{3}}$$
but we prefer
\begin{equation}\label{S3}
\begin{split}
\delta_j^- \bv&=\frac{\bv_{j-1/2}}{4}+\frac{5}{6}\bv_j-\frac{3}{2}\bv_{j+1/2}+\frac{\bv_{j+1}}{2}-\frac{\bv_{j+3/2}}{12}\\
\delta_j^+\bv&=\frac{\bv_{j+1/2}}{4}+\frac{5}{6}\bv_j-\frac{3}{2}\bv_{j-1/2}+\frac{1}{2}\bv_{j-1}-\frac{\bv_{j-3/2}}{12}
\end{split}
\end{equation}
\item Etc\ldots
\end{itemize}
It can be useful to have more dissipative versions of  a first order scheme. We take:
$$
\bigg (J\dpar{v}{x}\bigg )_j=\overleftarrow{\Phi}_{j+1/2}+\overrightarrow{\Phi}_{j-1/2}$$ with
\begin{equation*}
\begin{split}
\frac{\Delta_{j+1/2}}{2}\overleftarrow{\Phi}_{j+1/2}&=\frac{1}{2} \widehat {J\dpar{\bv}{x}}_j+\alpha \big (\bv_j-\frac{\bv_j+\bv_{j+1/2}}{2}\big )\\
\frac{\Delta_{j+1/2}}{2}\overrightarrow{\Phi}_{j+1/2}&=\frac{1}{2} \widehat {J\dpar{\bv}{x}}_{j+1}+\alpha \big (\bv_{j+1}-\frac{\bv_{j+1}+\bv_{j+1/2}}{2}\big )
\end{split}
\end{equation*}
where 
$\widehat {J\dpar{v}{x}}_l$ is a consistent approximation of $J\dpar{u}{x}$ at $x_l$ and $\alpha$ is an upper-bound of the spectral radius of $J(\bv_j)$, $J(\bv_{j+1/2})$ and $J(\bv_{j+1})$. We take, for simplicity,  $\bv_{j+1/2}=\Psi^{-1}(\bar \bu_{j+1/2})$. For the model \eqref{eq:primitive}, we take
$$\frac{\Delta_{j+1/2}}{2}\widehat {J\dpar{v}{x}}_j=\begin{pmatrix}
(\rho u)_{j+1/2}-(\rho u)_j\\
\frac{1}{2} \big ( u_{j+1/2}^2-u_j^2) + \frac{1}{\tilde{\rho}_{j+1/2}}\big ( p_{j+1/2}-p_j\big )\\
\tilde{u}_{j+1/2}\big (p_{j+1/2}-p_j)+\tilde{\rho c^2}\big (u_{j+1/2}-u_j\big )
\end{pmatrix}$$ where $\tilde{\rho}_{j+1/2}$ is the geometric average of $\rho_j$ and $\rho_{j+1/2}$, $\tilde{u}_{j+1/2}$ is the arithmetic average of $u_j$ and $u_{j+1/2}$, while $\widetilde{\rho c^2}_{j+1/2}=\gamma \frac{p_j+p_{j+1/2}}{2}$.
For the model \eqref{eq:entropy}, we take:
$$\frac{\Delta_{j+1/2}}{2}\widehat {J\dpar{v}{x}}_j=\begin{pmatrix}
\tilde{u}_{j+1/2} \big ( s_{j+1/2}-s_j\big )\\
\frac{1}{2} \big ( u_{j+1/2}^2-u_j^2) + \frac{1}{\tilde{\rho}_{j+1/2}}\big ( p_{j+1/2}-p_j\big )\\
\tilde{u}_{j+1/2}\big (p_{j+1/2}-p_j)+\widetilde{\rho c^2}\big (u_{j+1/2}-u_j\big )
\end{pmatrix}
.$$
All this has a Local Lax-Friedrichs' flavour, and seems to be positivity preserving for the velocity and the pressure.

Using this, the method is :
\begin{subequations}\label{method}
\begin{equation}
\label{methode:1}
\begin{split}
 \dfrac{d\bv_j}{dt}+\overleftarrow{\Phi}_{j+1/2}^\bv+\overrightarrow{\Phi}_{j-1/2}^\bv=0
\end{split}
\end{equation}
combined with 
\begin{equation}
\label{methode:2}
\Delta x\dfrac{d\bar \bu_{j+1/2}}{dt}+{\bbf(\bu_{j+1})-\bbf(\bu_j)}=0.
\end{equation}
\end{subequations}
\remi{We see in \eqref{methode:1} that the time derivative of $\bv$ is obtained by adding two fluctuations, one computed for the interval $K_{j+1/2}=[x_j,x_{j+1}]$ and one for the interval $K_{j-1/2}=[x_{j-1},x_j]$. These fluctuations are obtained from \eqref{fluctuation} with the increments in $\bv$ defined by \eqref{S1}, \eqref{S2}, \eqref{S3}, etc. In the sequel, we denote the  scheme applied on the interval $K_{j+1/2}$  by $S_{j+1/2}(k)$ where the average are integrated by \eqref{methode:2} and $\bv$ by \eqref{methode:1} with the fluctuations \eqref{S1} for $k=1$, \eqref{S2bis} for $k=2$ and \eqref{S2} for $k=3$, etc. To make sure that the first order scheme is positivity preserving (at least experimentaly), we may also consider the case denoted by $k=0$ where $S_{j+1/2}(0)$ is the local Lax Friedrichs scheme defined above. Both fluctuation \eqref{S1} and the local Lax Friedrichs scheme are first order accurate, but the second one is quite dissipative but positivity preserving while the scheme \eqref{S1} is not (experimentaly) positivity preserving.}
\remi{The system \eqref{method} is integrated in time by a Runge-Kutta solver: RK1, RK SSP2 and RK SSP3.  }

\subsection{Error analysis in the scalar case}
Here, the mesh is uniform, so that $\Delta_{j+1/2}=\Delta $ for any $j\in \Z$. 
It is easy to check the consistency, and on figure \ref{fig:error:linear} we show the $L^1$ error on $u$ and $\bar u$ for \eqref{method} with SSPKR2 and SSPRK3 (CFL=$0.4$) for a convection problem 
$$\dpar{u}{t}+\dpar{u}{x}=0$$ with periodic boundary conditions and the initial condition $u_0=\cos(2\pi x)$.
\begin{figure}[h]
\begin{center}
\includegraphics[width=0.7\textwidth]{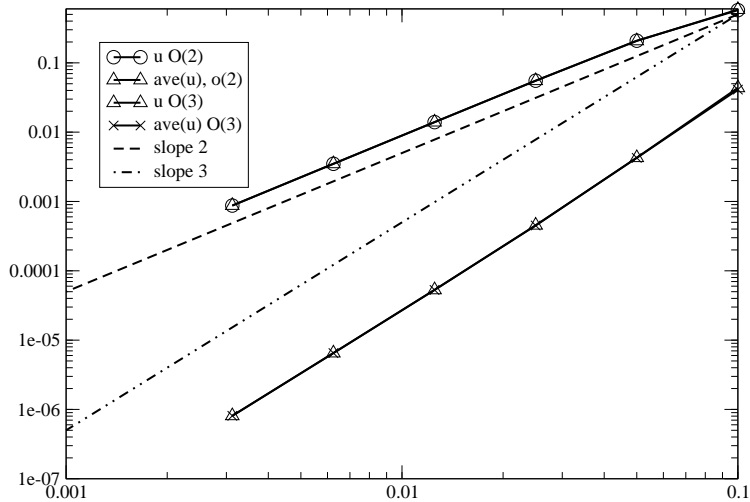}
\end{center}
\caption{\label{fig:error:linear} Error plot for $u$ and $\bar u$ for \eqref{method} with SSPKR2 and SSPRK3 (cfl=$0.4$). Here $f(u)=u$. The second order results are obtained with SSPRK2 with \eqref{methode:1}-\eqref{methode:2}, the third order results is obtained by  \eqref{methode:1}-\eqref{methode:2}.}
\end{figure}
{\begin{remark}[Linear stability]
In the appendix \ref{appendix:linearstability}, we perform the $L^2$ linear stability and we get, with $\lambda=\tfrac{\Delta t}{\Delta }$,
\begin{itemize}
\item First order scheme, $\vert \lambda\vert\leq 0.92$,
\item Second order scheme, $\vert\lambda\vert\leq 0.6$,
\item Third order scheme, $\vert\lambda\vert\leq 0.5$.
\end{itemize}
\end{remark} }

We also have run this scheme for the Burgers equation, and compared it with a standard finite volume (with local Lax-Friedrichs).  The conservative form of  the PDE is used for the average, and the non conservative one for the point values: $J=u$ and $\psi(u)=u$. This is an experimental check of conservation.
The initial condition is $$\bu_0(x)=\sin(2\pi x)+\frac{1}{2}$$ on $[0,1]$, so that there is a moving shock.
\begin{figure}[h]
\begin{center}
\subfigure[]{\includegraphics[width=0.45\textwidth]{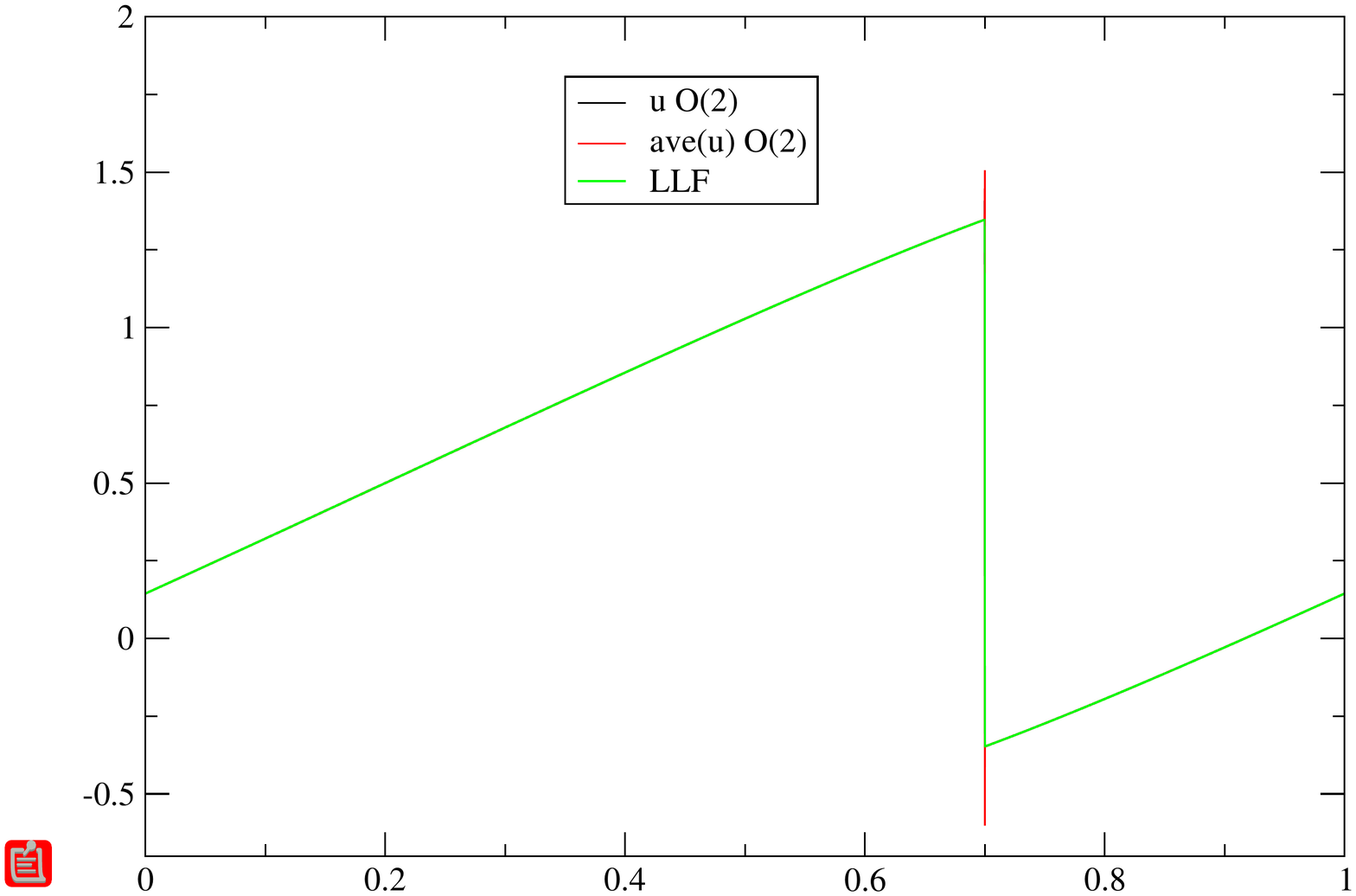}}
\subfigure[]{\includegraphics[width=0.45\textwidth]{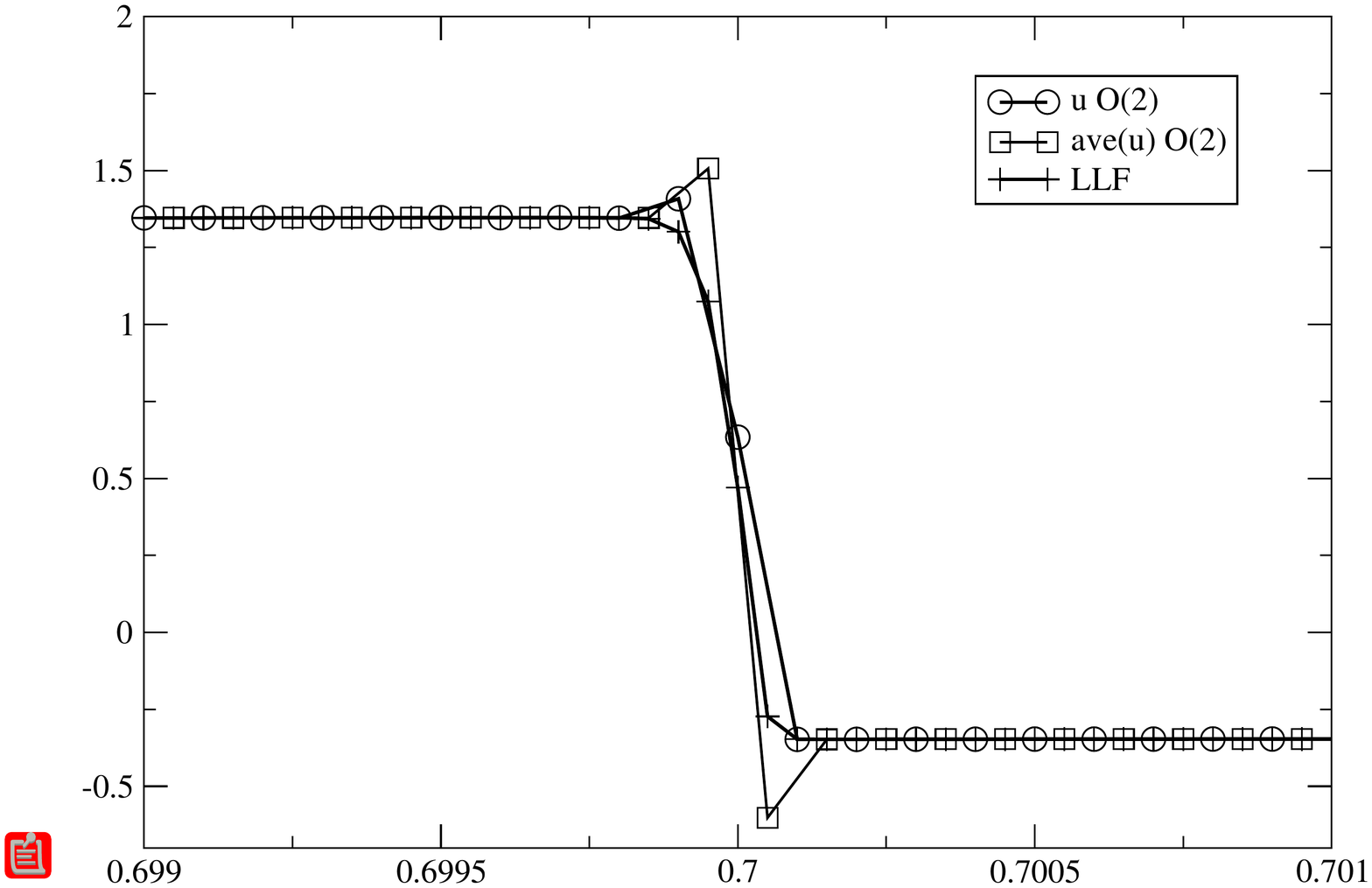}}
\subfigure[]{\includegraphics[width=0.45\textwidth]{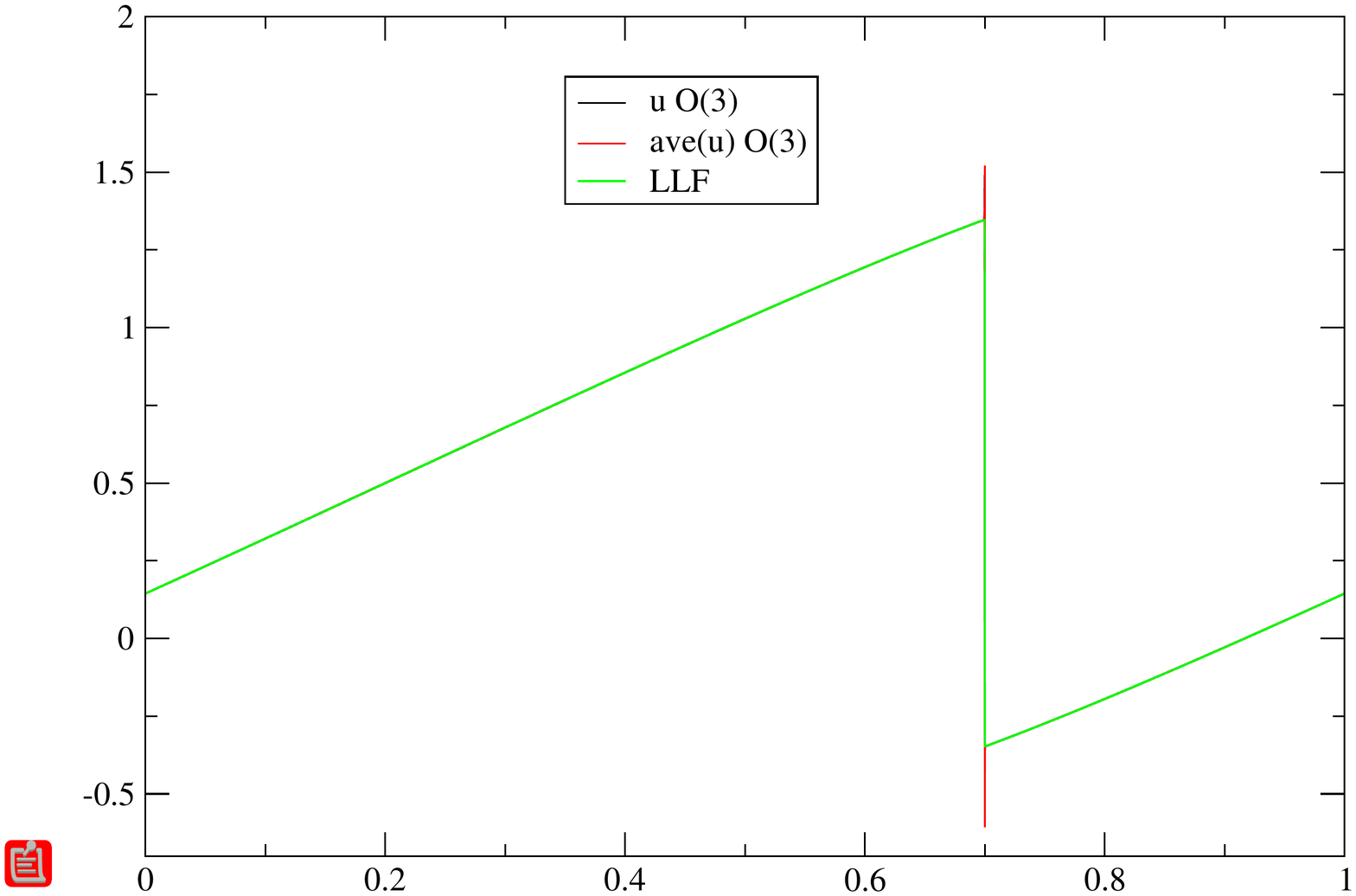}}
\subfigure[]{\includegraphics[width=0.45\textwidth]{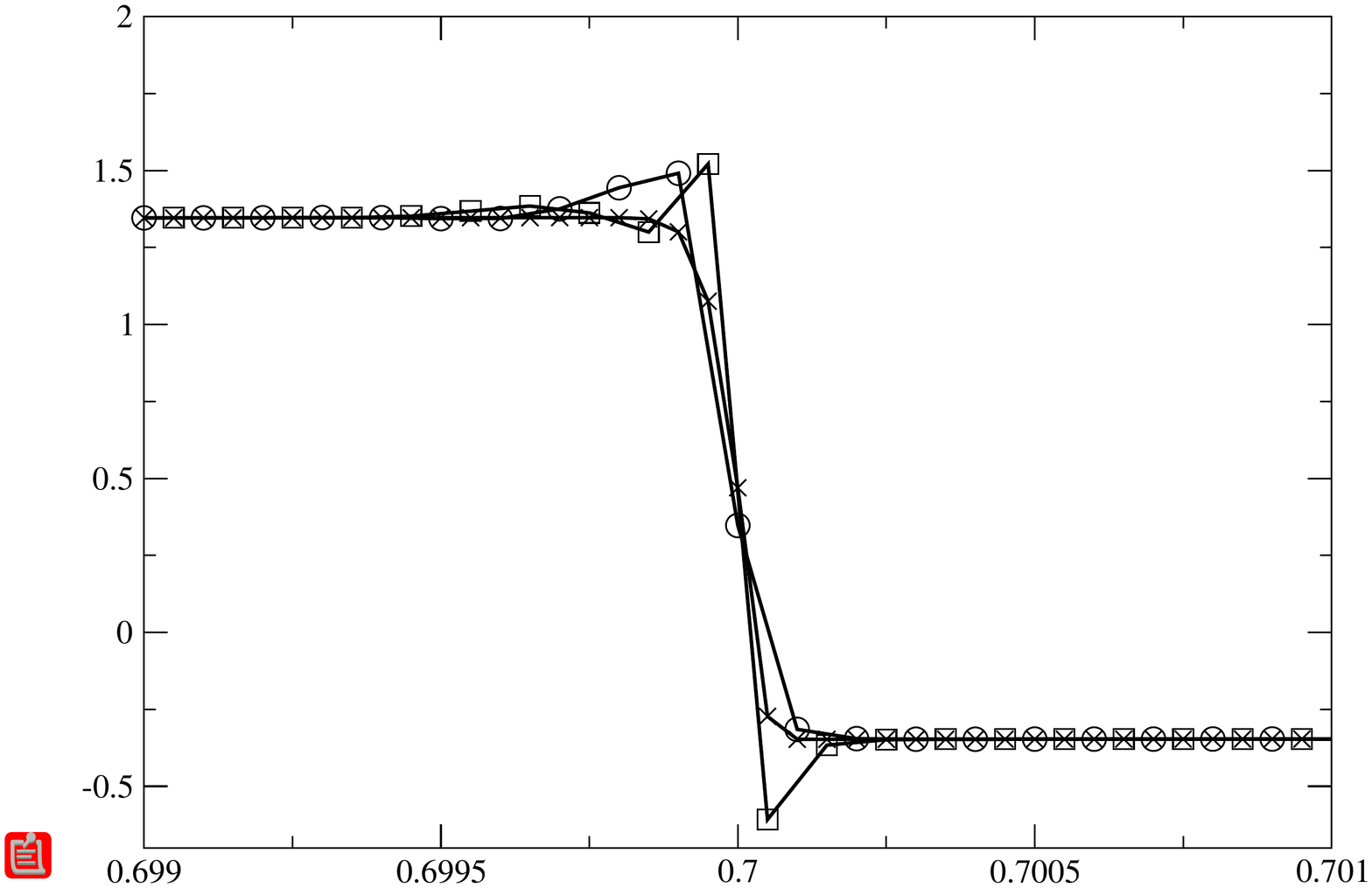}}
\caption{\label{conservationtest} Solution of Burgers with $10\, 000$ points, $t_{fin}=0.4$, $CFL=0.4$ for the second order (a and b) (\eqref{methode:1}-\eqref{methode:2} with SSPRK2) and third order (c and d) (\eqref{methode:1}-\eqref{methode:2} with SSPRK3). The global solution is represented in (a) and (c), and a zoom around the discontinuity is shown in (b) and (c).}
\end{center}
\end{figure}
We can see that the agreement is excellent and that the numerical solution behaves as expected.

\subsection{Non linear stability}\label{sec:mood}
As such, the scheme is at most linearly stable, with a CFL condition based on the fine grid. However, in case of discontinuities or the occurence of gradients that are not resolved by the grid, we have to face oscillations, as usual.

\remi{In order to get high order oscillation free results, a natural option would be to extend the MUSCL approach to the present context. However, it is not very clear how to proceed, } so we have relied on the MOOD paradigm \cite{Mood1,Vilar}. The idea is to work with several schemes ranging from order $p$ to $1$, with the lowest order one able to provide results with positive density and pressure.\remi{ These schemes are the  $S_{j+1/2}(k), k=1, \ldots ,3$ scheme defined above. They }are assumed to work for a given CFL range, and the algorithm is as follows: For each Runge-Kutta sub-step, starting from $U^n=\{\bar \bu_{j+1/2}^n, \bar \bv_j^n\}_{j\in \Z}$, we compute
\begin{equation}
\label{method_mood}
\begin{split}
\tilde{\bar \bu}_{j+1/2}^{n+1}&=\bar \bu_{j+1/2}^n -\lambda_n\big ( \bbf (\bu_{j+1}^n)-\bbf(\bu_j^n)\big ), \qquad \lambda_n=\frac{\Delta t_n}{\Delta_{j+1/2}}\\
\tilde{\bv}_j^{n+1}&=\tilde{\bv}_j^{n}- 2\Delta t_n\overleftarrow{\Phi}^\bv_{j+1/2}\\
\tilde{\bv}_{j+1}^{n+1}&=\tilde{\bv}_{j+1}^{n}-2\Delta t_n\overrightarrow{\Phi}^\bv_{j+1/2}
\end{split}
\end{equation}
by the scheme $S_{j+1/2}(p)$. Then we test the validity of these results in the interval $[x_{j}, x_{j+1}]$ for the density (and possibly the pressure). This is described a little bit later.  The variable  $\bv$ is updated as in \eqref{method_mood}, because  at $t_{n+1}$, the true update of $\bv_j$ is the half sum of $\tilde{\bv}_j^{n+1}$ and $\tilde{\bv}_{j+1}^{n+1}$.

If the test is positive, then we keep the scheme $S_{j+1/2}(p)$ in that interval, else we start again with $S_{j+1/2}(p-1)$, and repeat the procedure unless all the intervals $K_{j+1/2}$ have successfully passed the test. This is described in Algorithm \ref{algo:mood1} where $\mathcal{S}_{j+1/2}$ is the stencil used in $K_{j+1/2}$.
\begin{algorithm}[H]
\caption{ \label{algo:mood1}Description of the MOOD loop. \remi{The algorithm stops because $\SS_{j+1/2}=0$ corresponds to the local Lax Friedrichs scheme for which the test is always true.}}
\begin{algorithmic}
\REQUIRE $U^n =\{\bar u_{j+1/2}^n, \bar v_j^n\}_{j\in \Z}$
\REQUIRE Allocate $\{\SS_{j+1/2}\}_{j\in \Z}$ an array of integers. It is initialized with $\SS_{j+1/2}=S_{j+1/2}(p)$, the maximum order.
\FOR{$k=p, \ldots, 2$}
\FORALL{For all $K_{j+1/2}$}
\STATE Define $\tilde{\bar \bu}_{j+1/2}^{n+1}$, $\tilde{\bv}_j^{n+1}$ and $\tilde{\bv}_{j+1}^{n+1}$ as in \eqref{method_mood}
\STATE Apply the test on $\tilde{\bar \bu}_{j+1/2}^{n+1}$, $\tilde{\bv}_j^{n+1}$ and $\tilde{\bv}_{j+1}^{n+1}$ :
\IF{test=.true.} 
\STATE $\SS_{j+1/2}= S_{j+1/2}(k-1)$
\ENDIF
\ENDFOR
\ENDFOR
\end{algorithmic}
\end{algorithm}
Now, we describe the tests. We do, in the following order, for each element $K_{j+1/2}$, at the iteration $k>0$ of the loop of \ref{algo:mood1}: the tests are performed on variables evaluated from $\bu$ and $\bv$. For the scalar case, they are simply the point values at $x_j, x_{j+1/2}$ and $x_{j+1}$. For the Euler equations they are the density, and possibly the pressure
\begin{enumerate}
\item We check if all the variables are numbers (i.e. not NaN). If \remi{not}, we state  $\SS_{j+1/2}=S_{j+1/2}(k-1)$,
\item (Only for the Euler equations) We check if the density is positive. We can also request to check if the pressure is also positive. If the variable is negative, the we state that $\SS_{j+1/2}=S_{j+1/2}(k-1)$.
\item Then we check if at $t_n$, the solution was not constant in the numerical stencils of the degrees of freedom in $K_{j+1}$, this in order to avoid to detect a fake maximum principle. We follow the procedure of \cite{Vilar}. if we observe that the solution was locally constant, the $\SS_{j+1/2}$ is not modified.
\item Then we apply a discrete maximum principle, even for systems though it is not very rigorous. For the variable $\xi$ (in practice the density, and we may request to do the same on the pressure), we compute $\min_{j+1/2}\xi$ (resp. $\max_{j+1/2}\xi$) the minimum (resp. maximum) of the values of $\xi$ on $K_{j+1/2}$, $K_{j-1/2}$ and $K_{j+3/2}$. We say we have a \remi{potential }maximum if $\tilde{\xi}^{n+1}\not \in [\min_{j+1/2}\xi^n+\varepsilon_{j+1/2}, 
\max_{j+1/2}\xi^n-\varepsilon_{j+1/2}]$, with $\epsilon_{j+1/2}$ estimated as in \cite{Mood1}. Then:
\begin{itemize}
\item If $\tilde{\xi}^{n+1}\in [\min_{j+1/2}\xi^n+\varepsilon_{j+1/2}, 
\max_{j+1/2}\xi^n-\varepsilon_{j+1/2}]$, $\SS_{j+1/2}$ is not modified
\item Else we use the following procedure introduced in \cite{Vilar}. In each $K_{l+1/2}$, we can evaluate a quadratic polynomial $p_{l+1/2}$ that interpolates $\xi$. Note that its derivative is linear in $\xi$. We compute
$$p'_{j-1/2}(x_j), p'_{j+3/2}(x_{j+1}), p'_{j+1/2}(x_j) \text{ and } p'_{j+1/2}(x_{j+1}).$$
\begin{itemize}
\item If 
$$p'_{j+1/2}(x_j) \in [\min(p'_{j-1/2}(x_j),p'_{j+3/2}(x_{j+1})] \text{ and } p'_{j+1/2}(x_{j+1}) \in [\min(p'_{j-1/2}(x_j),p'_{j+3/2}(x_{j+1})]$$
we say it is a true regular extrema and $\SS_{j+1/2}$ will not be modified,
\item Else the extrema is declared not to be regular, and $\SS_{j+1/2}=S_{j+1/2}(k-1)$
\end{itemize}
\end{itemize}
\end{enumerate}

\medskip
As a first application, to show that the oscillations are well controlled without sacrificing the accuracy, we consider the advection problem (with constant speed unity) on $[0,1]$, periodic boundary conditions with initial condition:
$$u_0(x)=
\left \{ \begin{array}{ll}
0&\text{ if } y\in [-1,-0.8[\\
\frac{1}{6}\big (G(y, \beta, z-\delta)+G(y,\beta,z+\delta)+4G(y,\beta,z) & \text{ if }y\in [-0.8,-0.6]\\
1& \text{ if } y\in [-0.4,-0.2]\\
1-\vert10y-1\vert & \text{ if }y\in [0,0.2]\\
\frac{1}{6}\big (F(y, \beta, z-\delta)+G(y,\beta,z+\delta)+4F(y,\beta,z) & \text{ else,}
\end{array} \right. \text{ with } y=2x-1
$$
Here $a=0.5$, $z=-0.7$, $\delta=0.005$, $\alpha=10$, 
$$\beta=\dfrac{\log 2}{36\delta^2}$$
and
$$G(t,\beta,z)=\exp\big ( -\beta(t-z)^2\big ), \qquad F(t,a,\alpha)=\sqrt{\max\big (0, 1-\alpha(t-a)^2\big )}.$$
Using the MOOD procedure with the third order scheme, the results  obtained for $300$ points for $T=10$ are displayed in figure \ref{ShuJiang}. They look  very reasonable.

\begin{figure}[h]
\begin{center}
\includegraphics[width=0.5\textwidth]{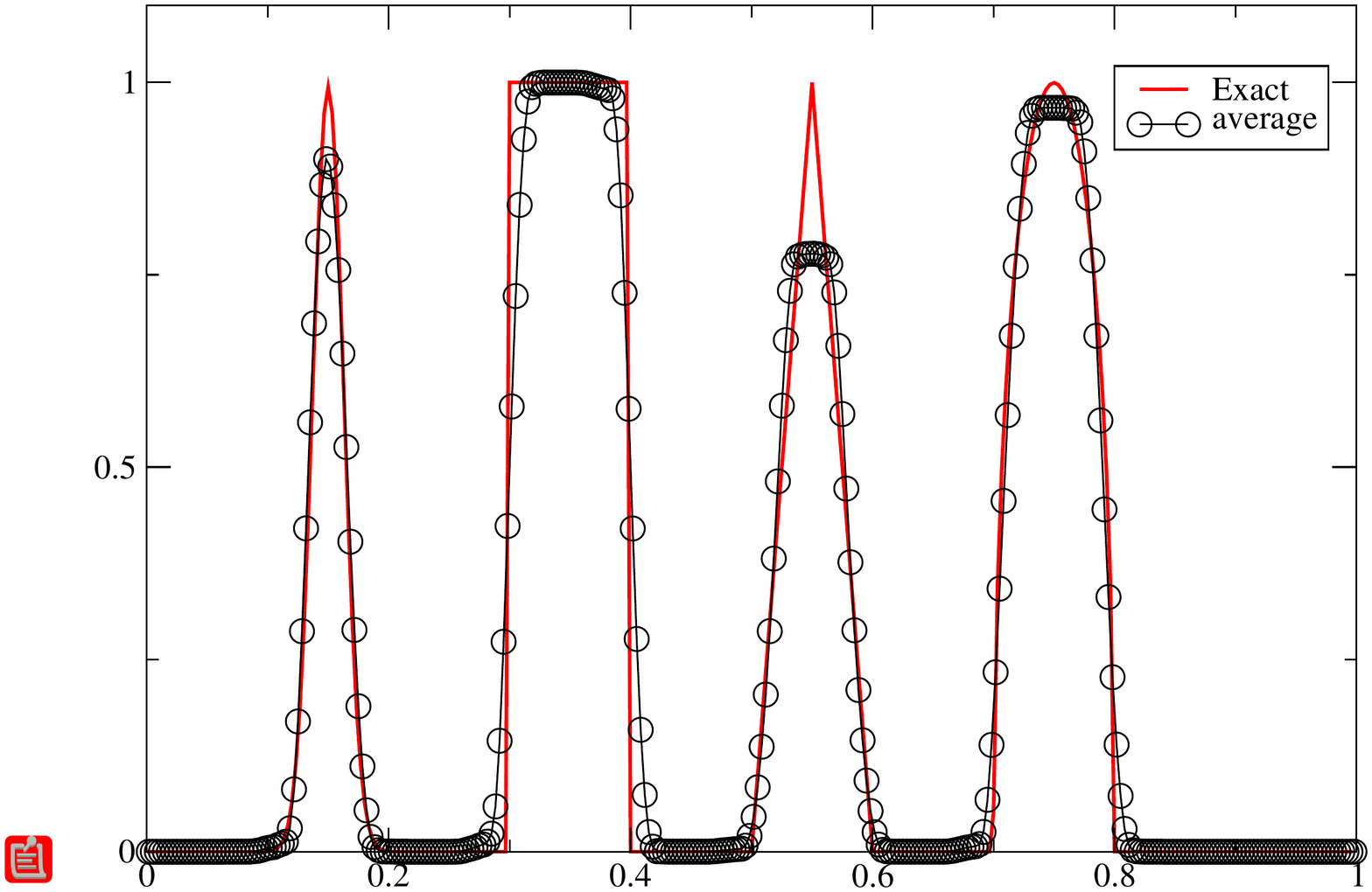}
\end{center}
\caption{\label{ShuJiang} Shu Jiang problem, CFL=0.4, third order scheme with MOOD, 300 points, periodic conditions, 10 periods. The point values and cell average are almost undistinguishable.}
\end{figure}

\section{Numerical results for the Euler equations}
In this section, we show the flexibility of the approach, where conservation is recovered only by the equation \eqref{methode:1}, and so lots of flexibility is possible with the relations on the $\bu_i$. To illustrate this, we consider the Euler equations.  We will consider the conservative formulation \eqref{eq:conservative} for the average value, so $\bu=(\rho, \rho u, E)^T$  and either the form \eqref{eq:primitive}, i.e. $\bv=(\rho, u, p)$ or the form \eqref{eq:entropy} with $\bv=(p,u,s)^T$.

\subsection{Sod test case}
The Sod case is defined for $[0,1]$, the initial condition is
$$(\rho,u,p)^T=\left \{ \begin{array}{ll}
(1,0,1)^T & \text{ for } x<0.5\\
(0.125,0,0.1)^T \text{ else.}
\end{array}\right .
$$
The final time is $T=0.16$. The problem is solved with  \eqref{eq:conservative}-\eqref{eq:primitive} and displayed in figures \ref{fig:sod_100O2}, \ref{fig:sod_100O2Mood}, \ref{fig:sod_100O3} and \ref{fig:sod_100O3Mood}, while the solution obtained with the combination \eqref{eq:conservative}-\eqref{eq:entropy} is shown on figure \ref{fig:sod:entro_100} and \eqref{fig:sod:entro_10000}. When the MOOD procedure is on, it is applied with $\rho$ and $p$ and all the test are performed. 
\begin{figure}[h]
\begin{center}
\subfigure[]{\includegraphics[width=0.45\textwidth]{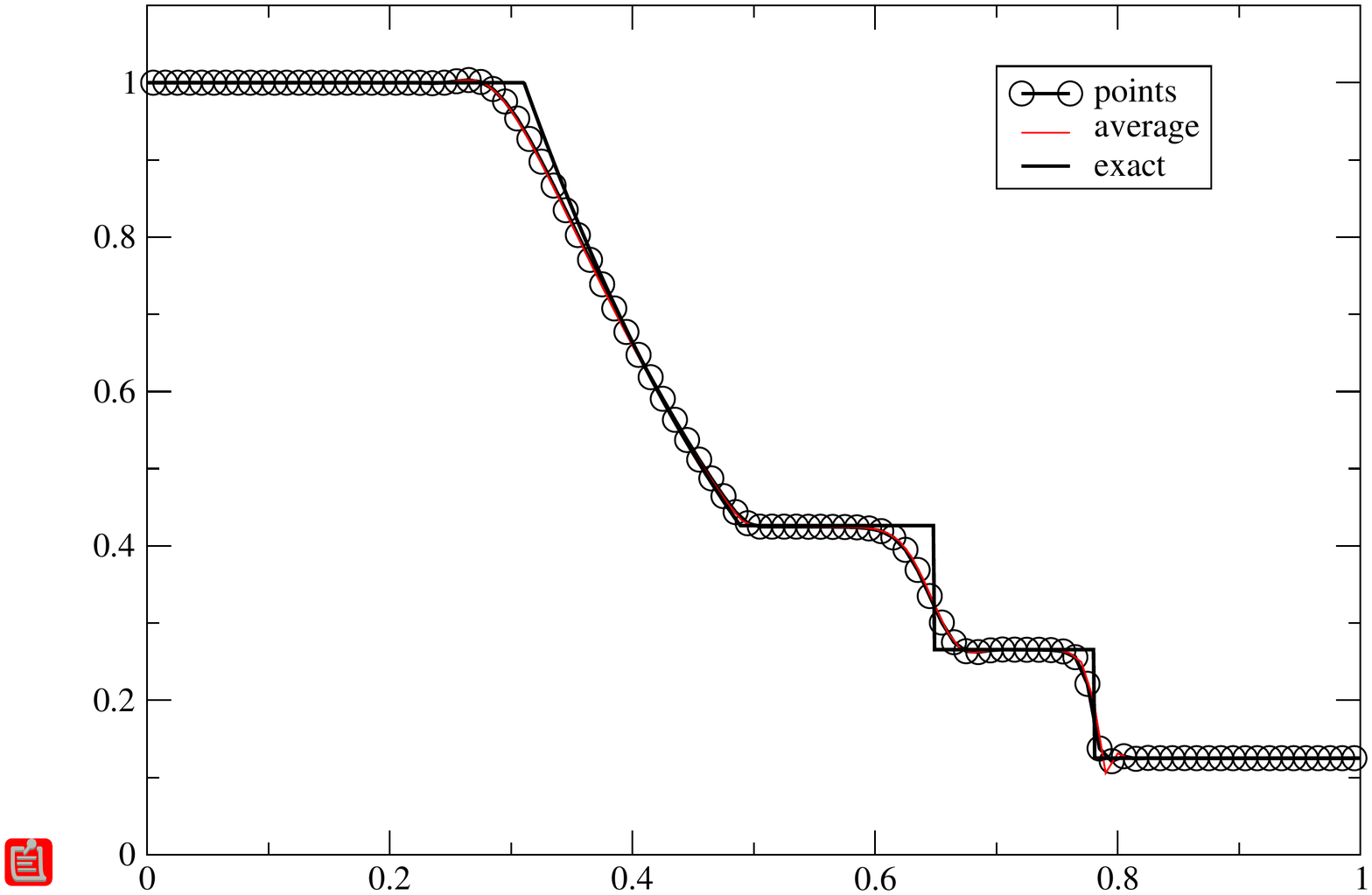}}
\subfigure[]{\includegraphics[width=0.45\textwidth]{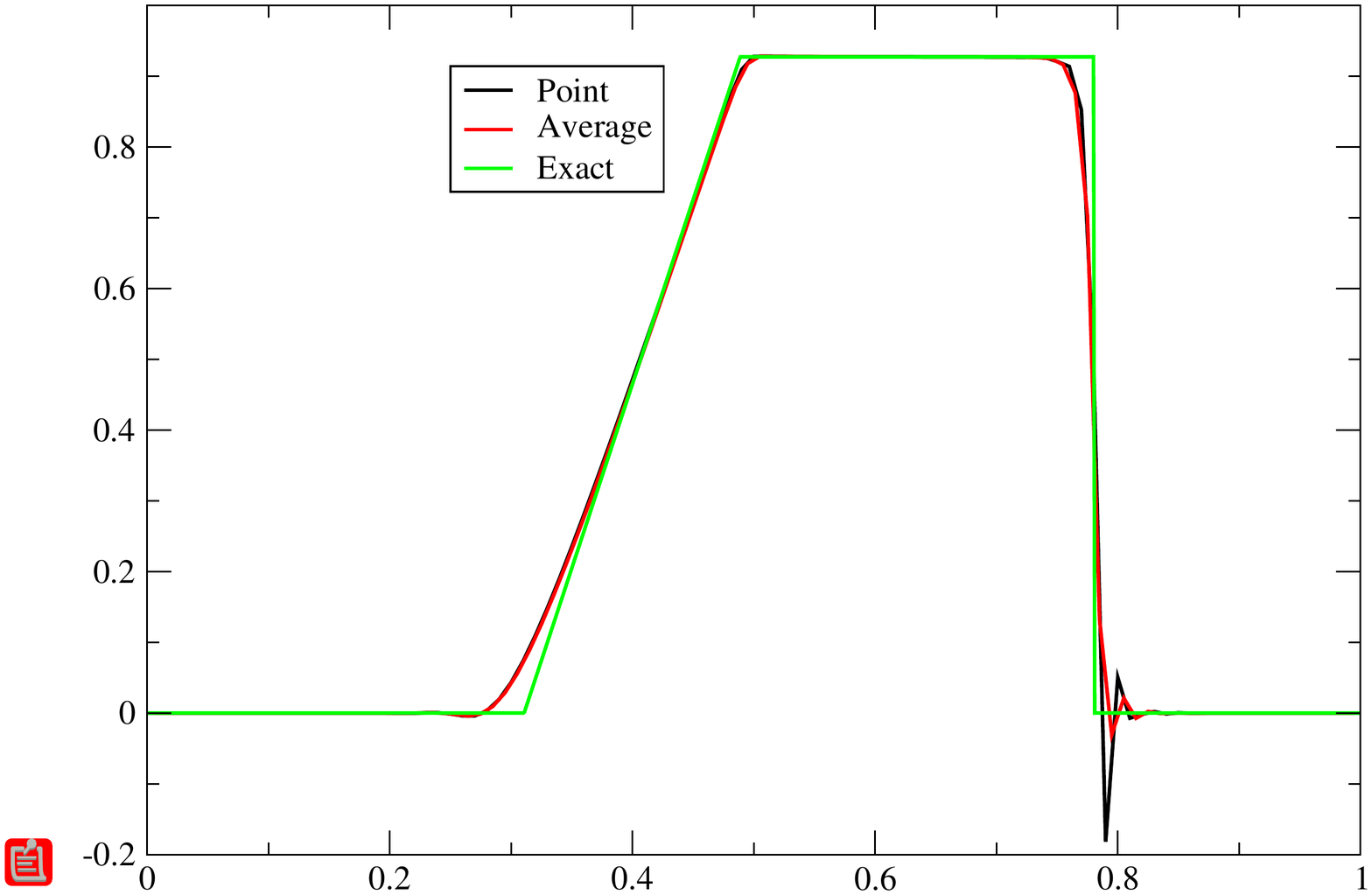}}
\subfigure[]{\includegraphics[width=0.45\textwidth]{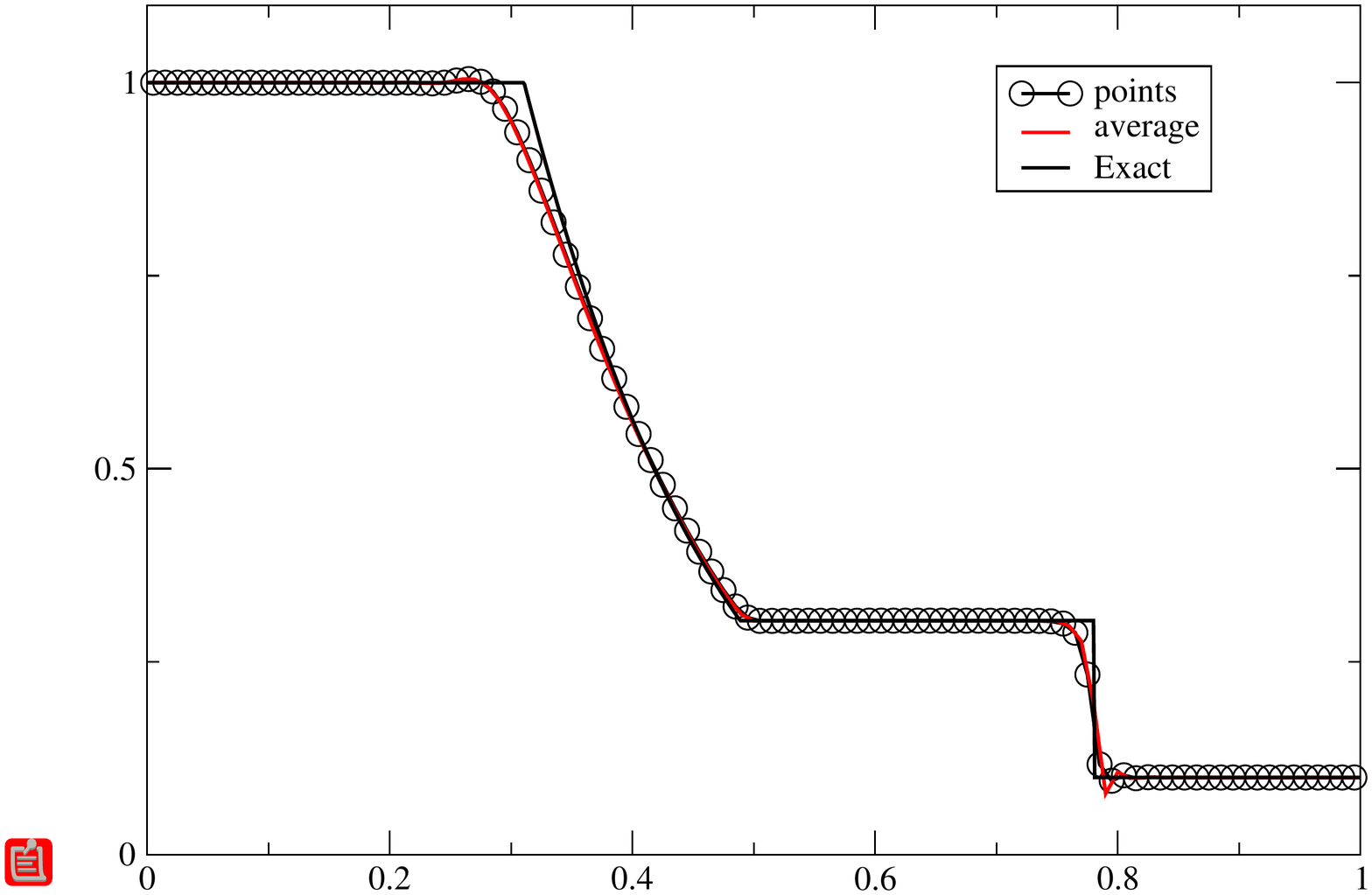}}
\end{center}
\caption{\label{fig:sod_100O2} 100 grid points, and the \remiIII{second order SSPRK2} scheme with CFL=0.1. (a): density, (b): velocity, (c): pressure. }
\end{figure}
\begin{figure}[h]
\begin{center}
\subfigure[]{\includegraphics[width=0.45\textwidth]{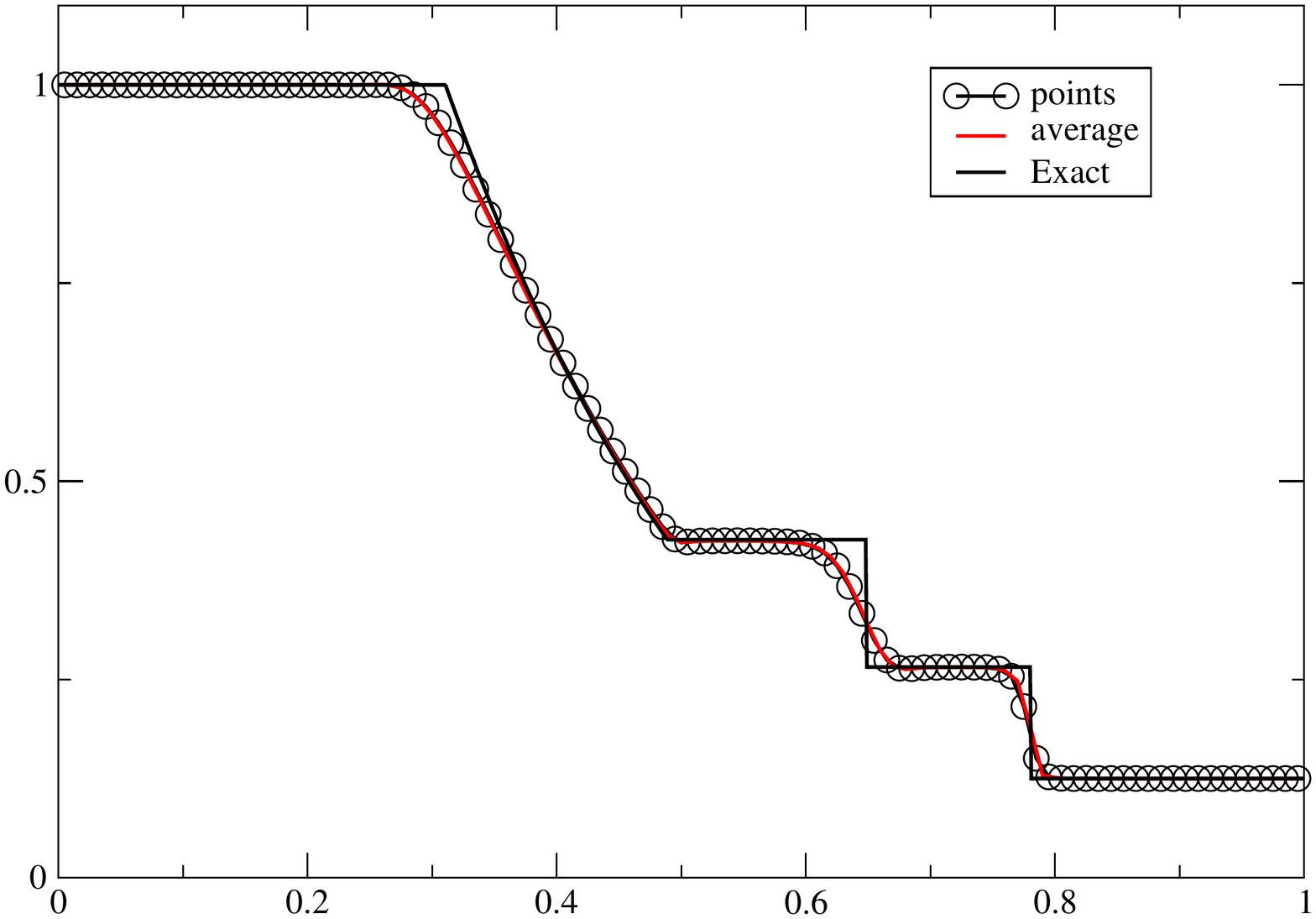}}
\subfigure[]{\includegraphics[width=0.45\textwidth]{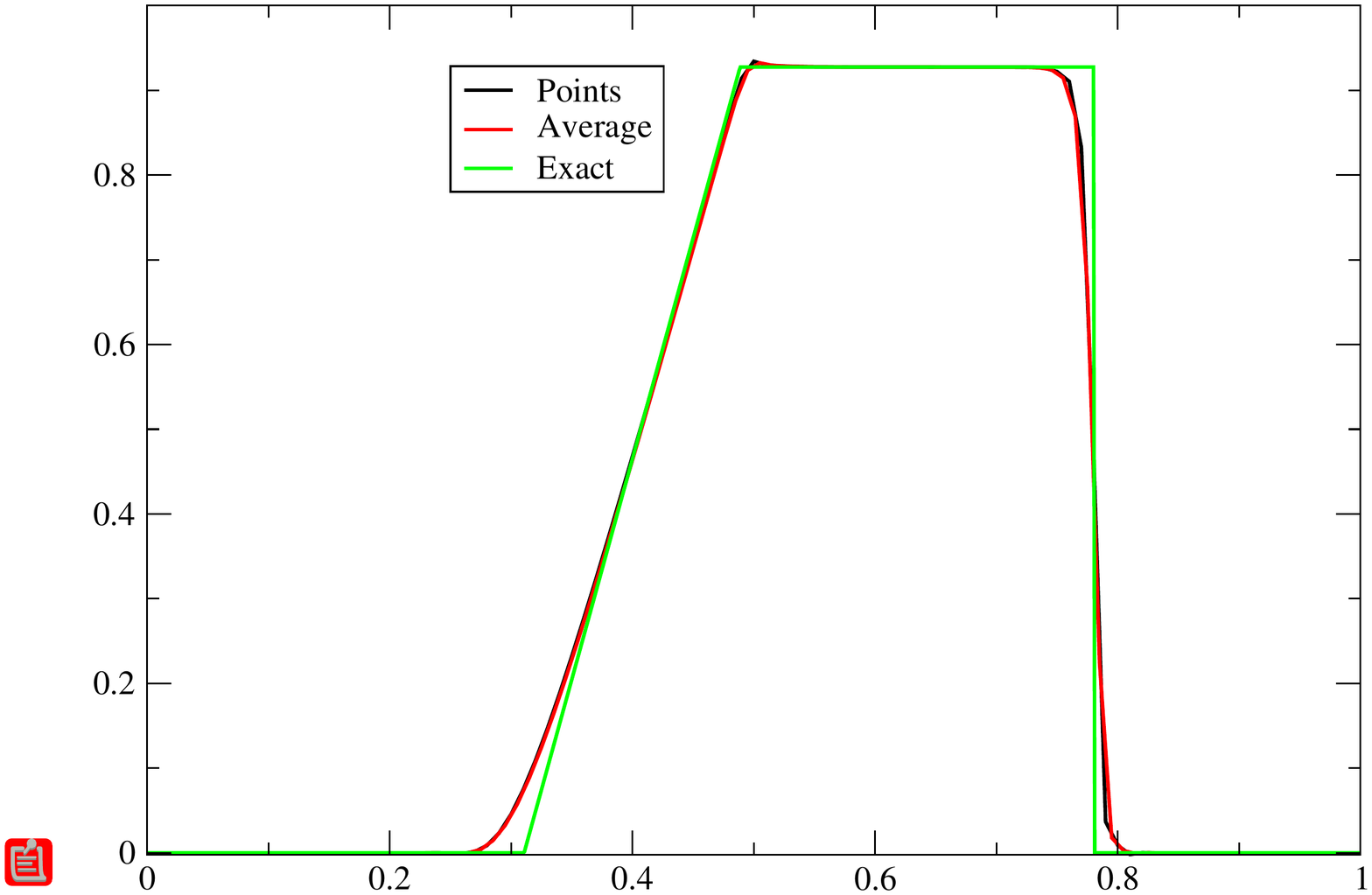}}
\subfigure[]{\includegraphics[width=0.45\textwidth]{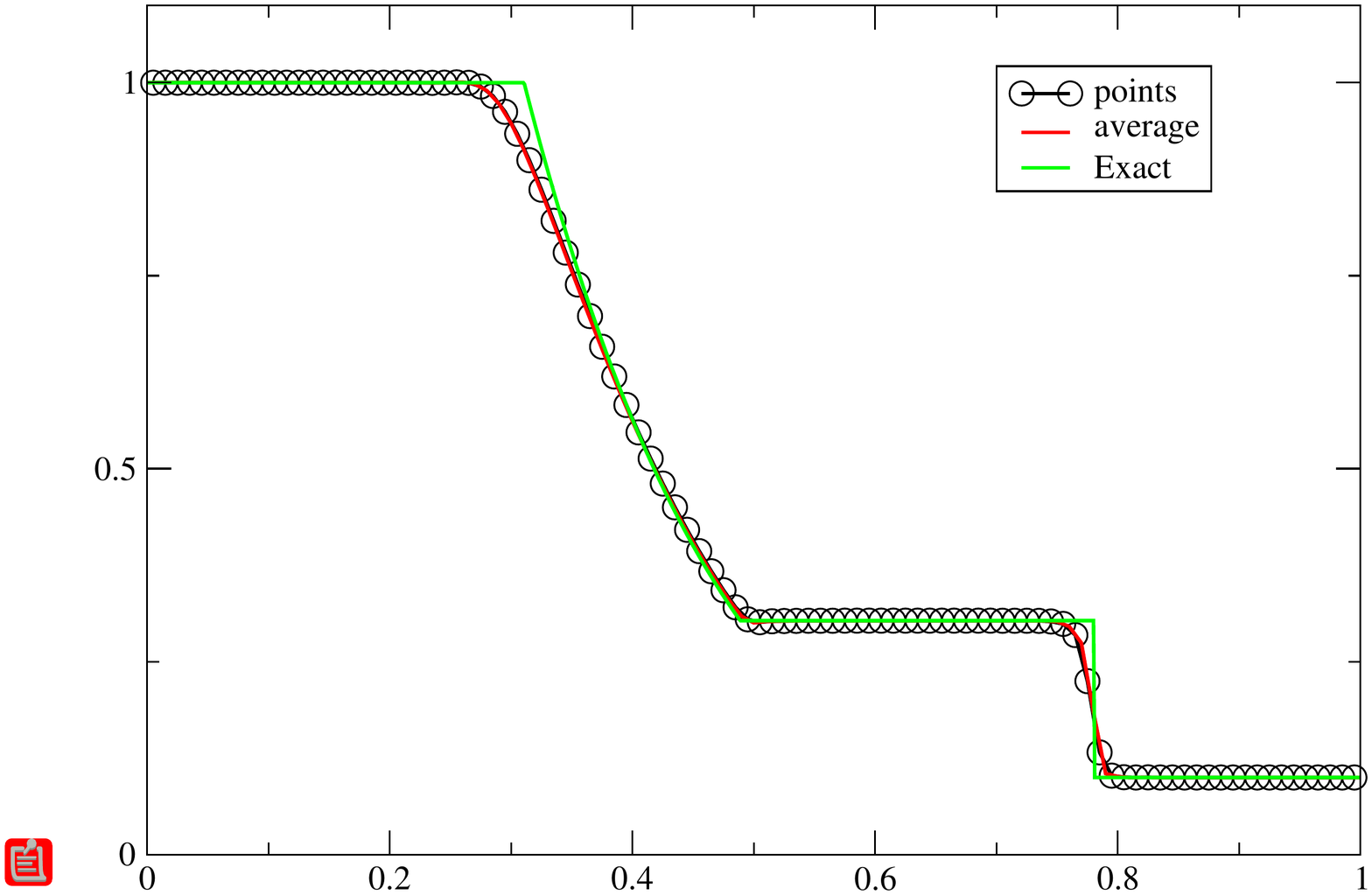}}
\end{center}
\caption{\label{fig:sod_100O2Mood} 100 grid points, and the \remiIII{second order SSPRK2} scheme with CFL=0.1. (a): density, (b): velocity, (c): pressure. MOOD test made on $\rho$ and $p$}
\end{figure}
The exact solution is also shown every time. Different order in time/space are tested. The results are good, eventhough the MOOD procedure is not perfect.
\begin{figure}[h]
\begin{center}
\subfigure[]{\includegraphics[width=0.45\textwidth]{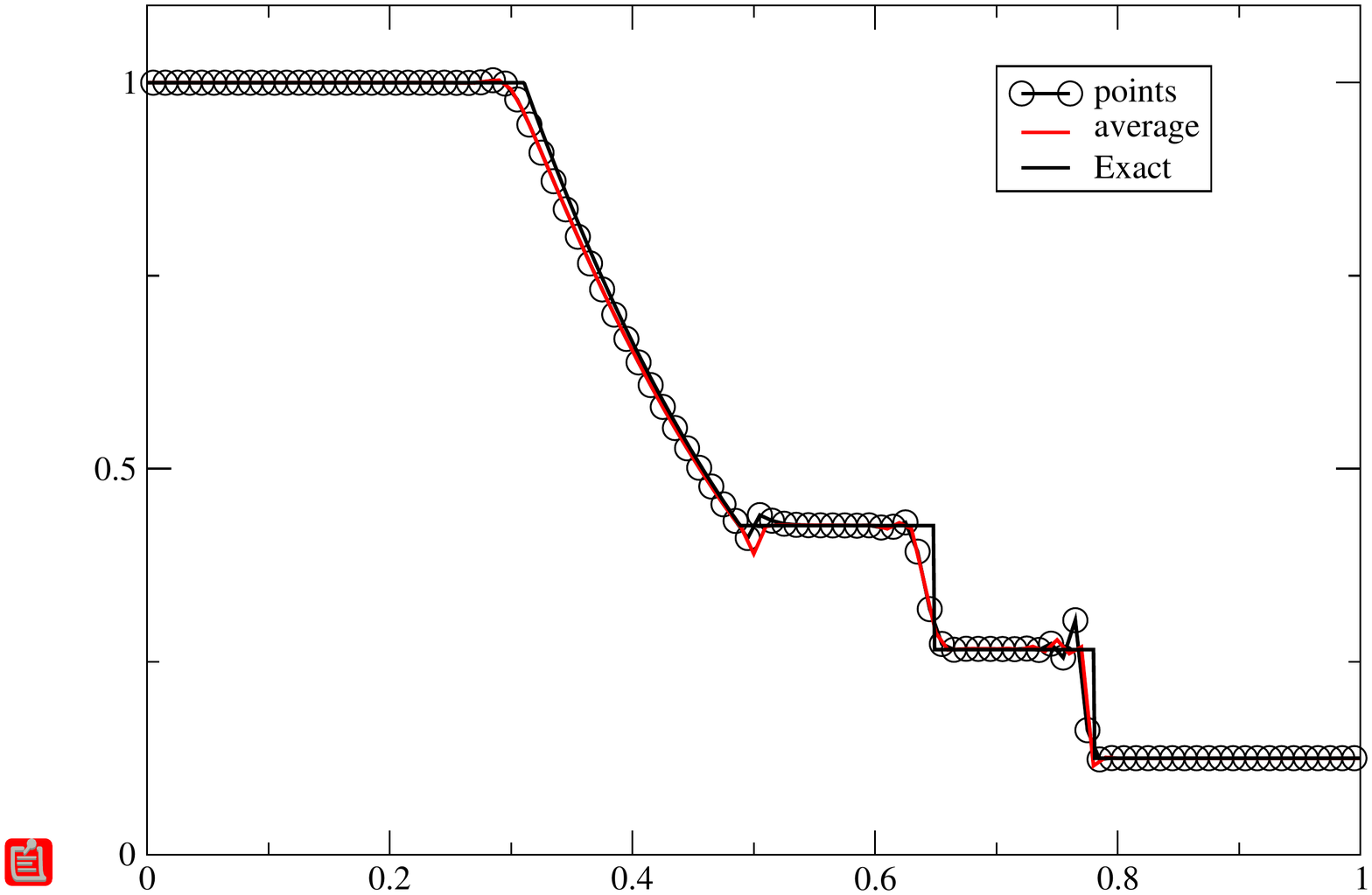}}
\subfigure[]{\includegraphics[width=0.45\textwidth]{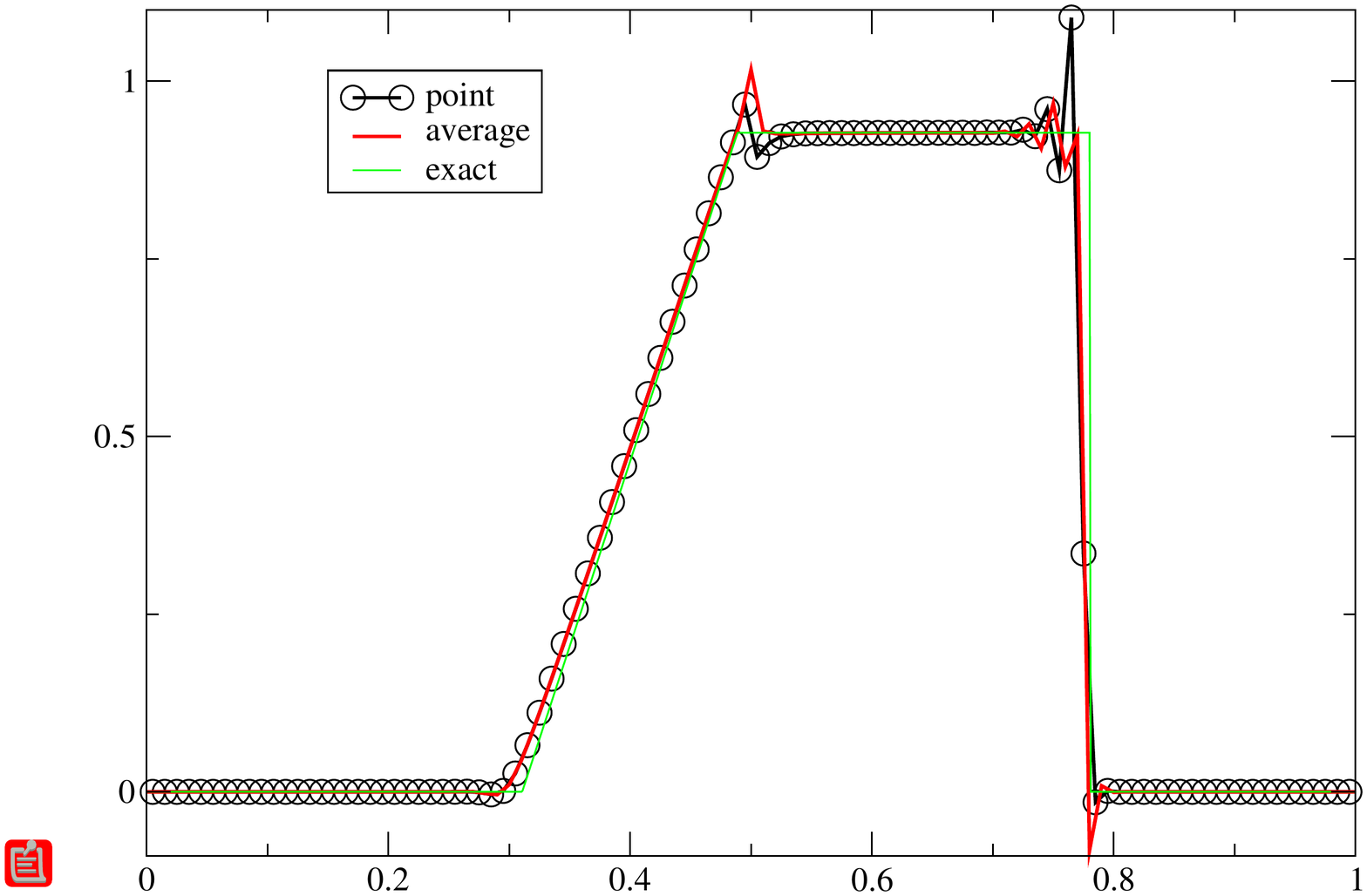}}
\subfigure[]{\includegraphics[width=0.45\textwidth]{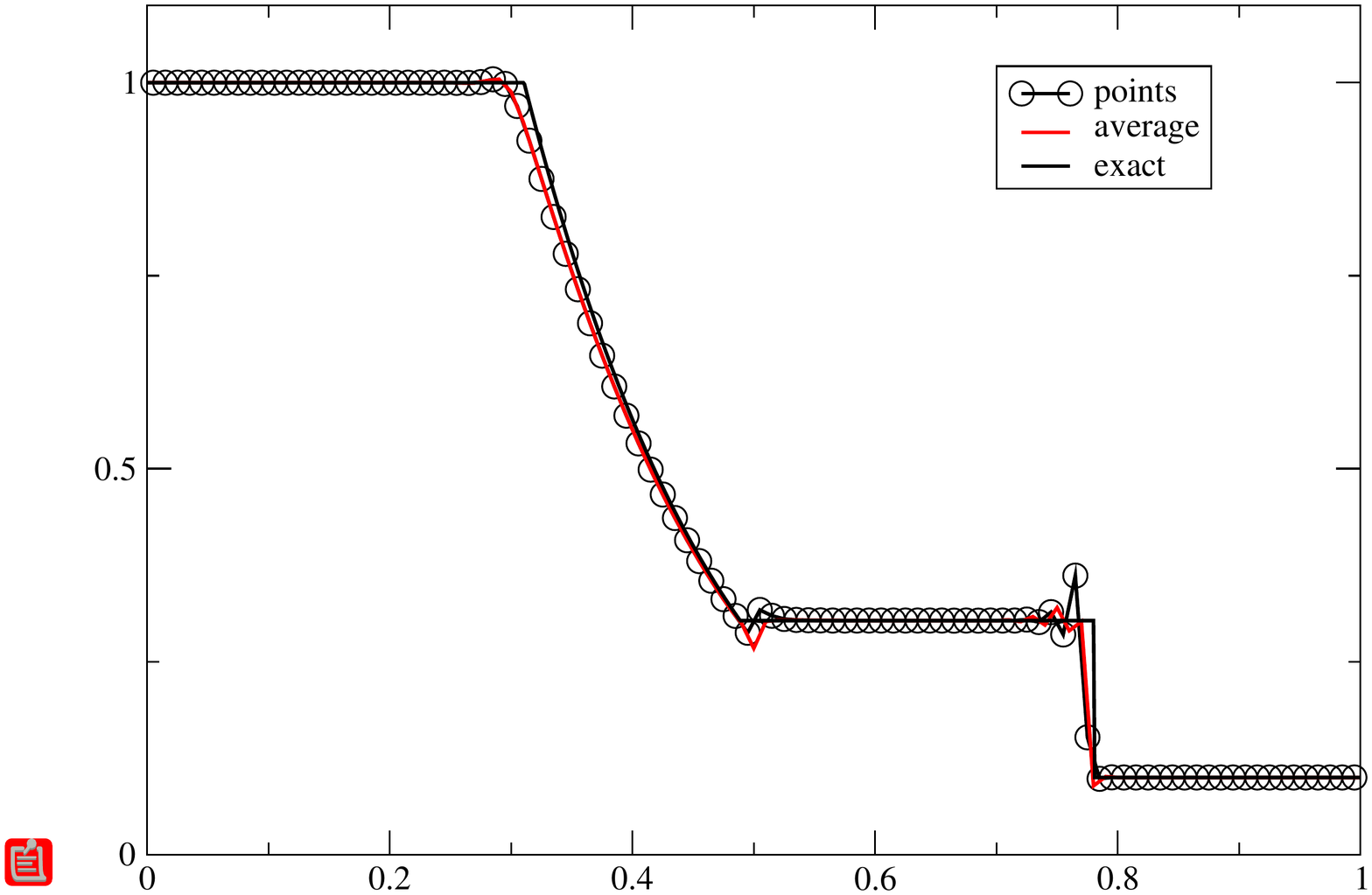}}
\end{center}
\caption{\label{fig:sod_100O3} 100 grid points, and the third order SSPRK3 scheme with CFL=0.1. (a): density, (b): velocity, (c): pressure. }
\end{figure}
\begin{figure}[h]
\begin{center}
\subfigure[]{\includegraphics[width=0.45\textwidth]{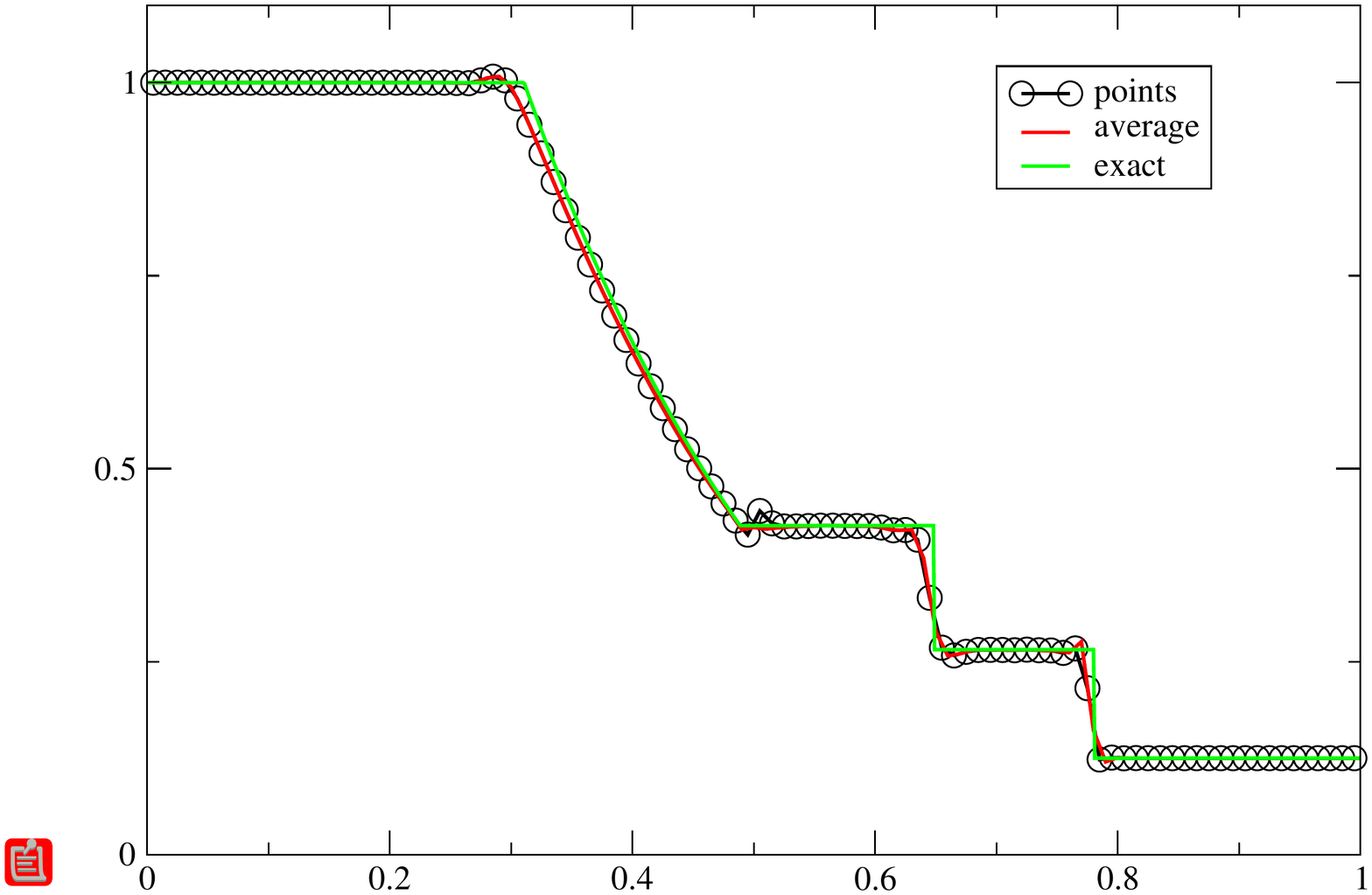}}
\subfigure[]{\includegraphics[width=0.45\textwidth]{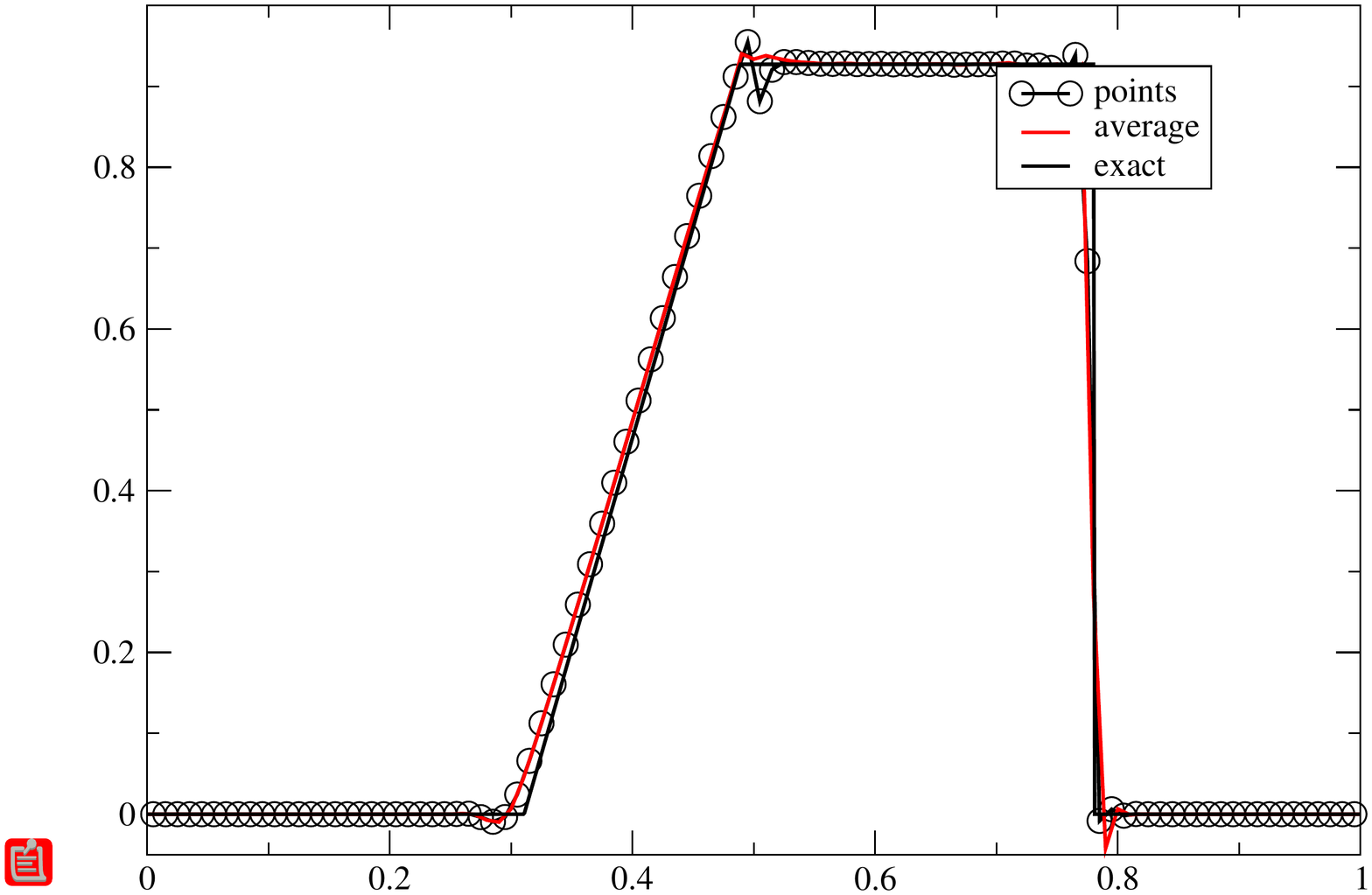}}
\subfigure[]{\includegraphics[width=0.45\textwidth]{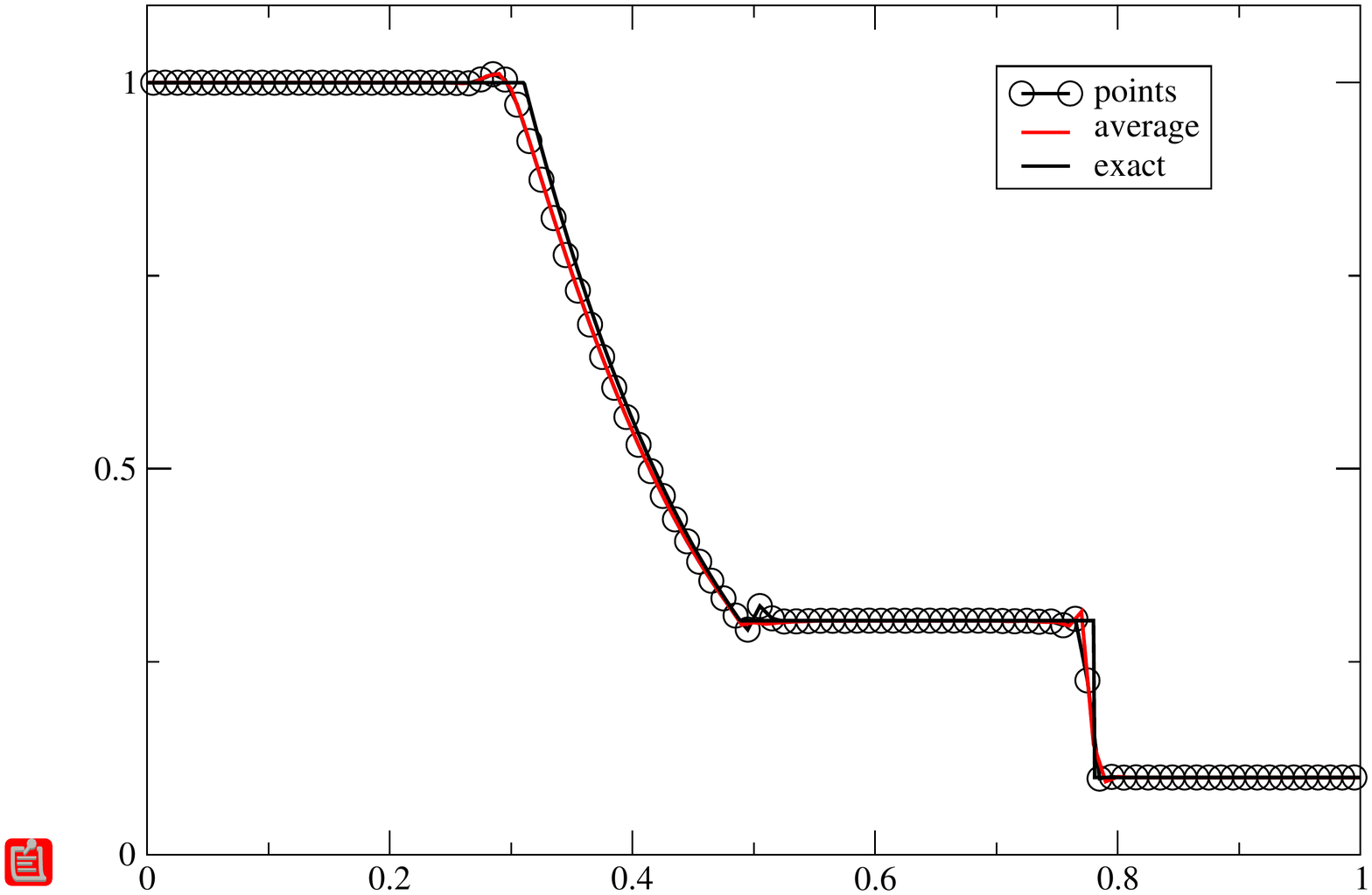}}
\end{center}
\caption{\label{fig:sod_100O3Mood} 100 grid points, and the third order SSPRK3 scheme with CFL=0.1. (a): density, (b): velocity, (c): pressure. Mood Test made on $\rho$ and $p$}
\end{figure}
This the use of the combination \eqref{eq:conservative}-\eqref{eq:entropy} seems more challenging, we have performed a convergence study (with 10000 points). This is shown on figure \eqref{fig:sod:entro_10000}, and a zoom around the contact discontinuity is also shown.
\begin{figure}[h]
\begin{center}
\subfigure[$\rho$]{\includegraphics[width=0.45\textwidth]{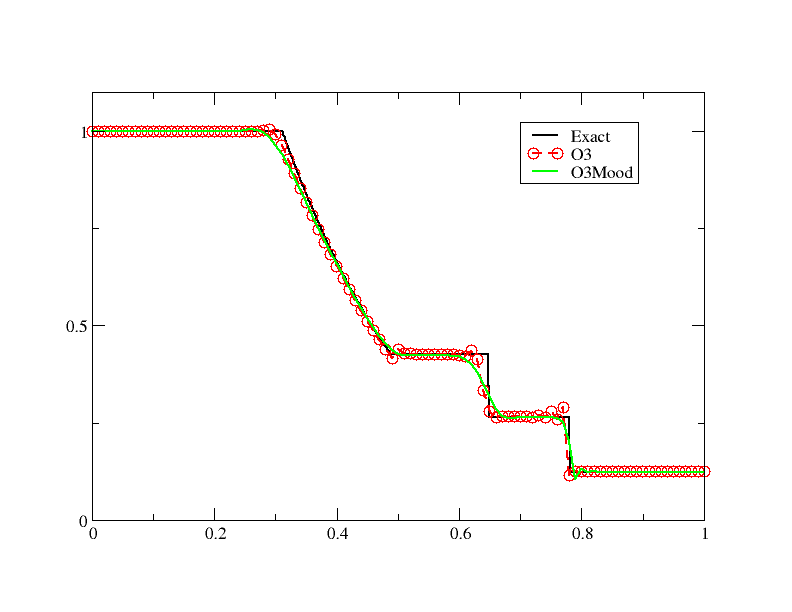}}
\subfigure[$p$]{\includegraphics[width=0.45\textwidth]{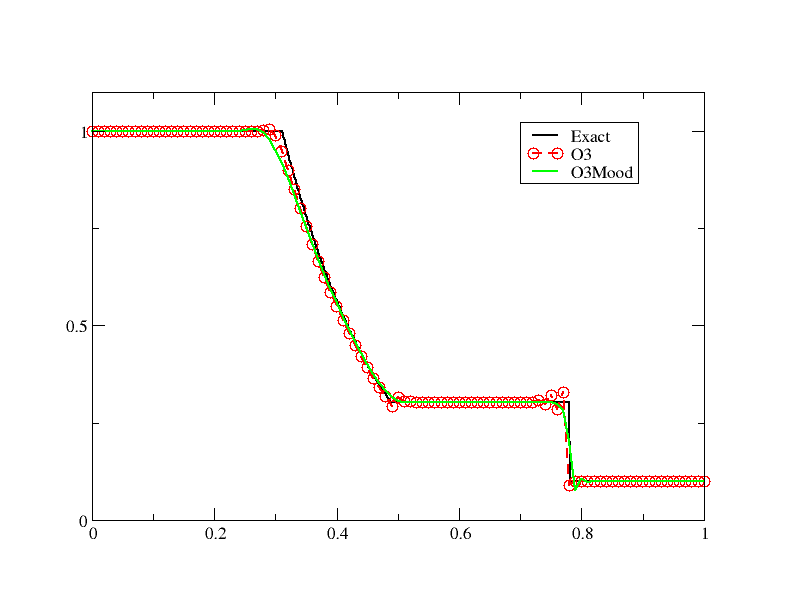}}
\subfigure[$u$]{\includegraphics[width=0.45\textwidth]{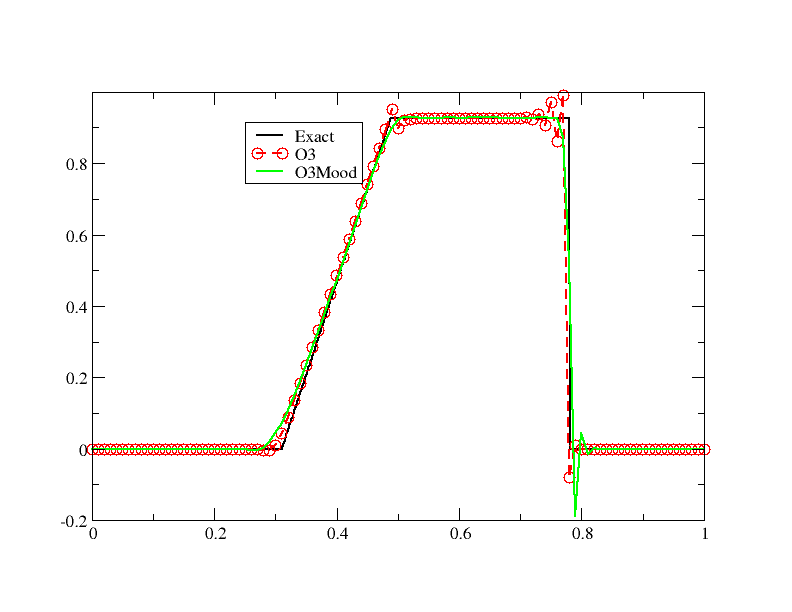}}
\subfigure[$s$]{\includegraphics[width=0.45\textwidth]{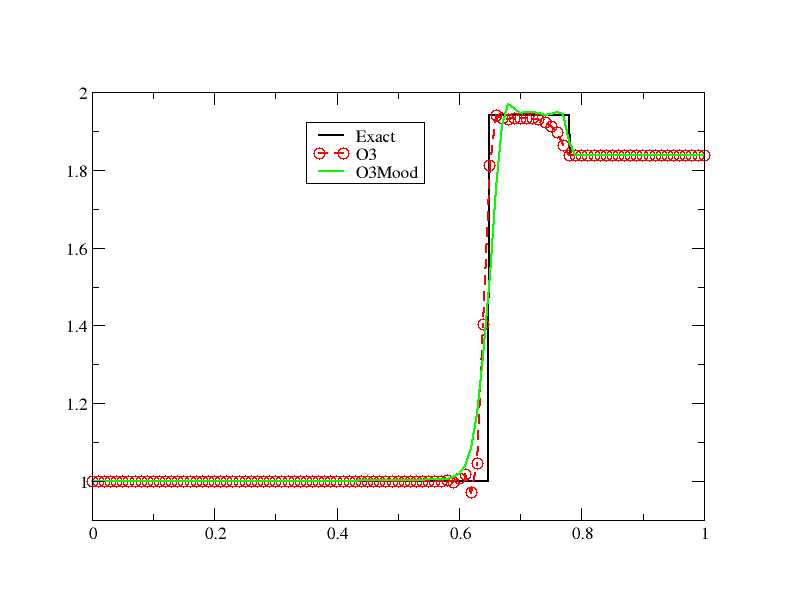}}
\end{center}
\caption{\label{fig:sod:entro_100} Solution with the variables (s,u,p) for 100 points, comparison with the exact solution, third order in time/space with Mood and non Mood. Mood is done on $\rho$ and $p$. Cfl=0.2}
\end{figure}
\begin{figure}[h]
\begin{center}
\subfigure[$\rho$]{\includegraphics[width=0.45\textwidth]{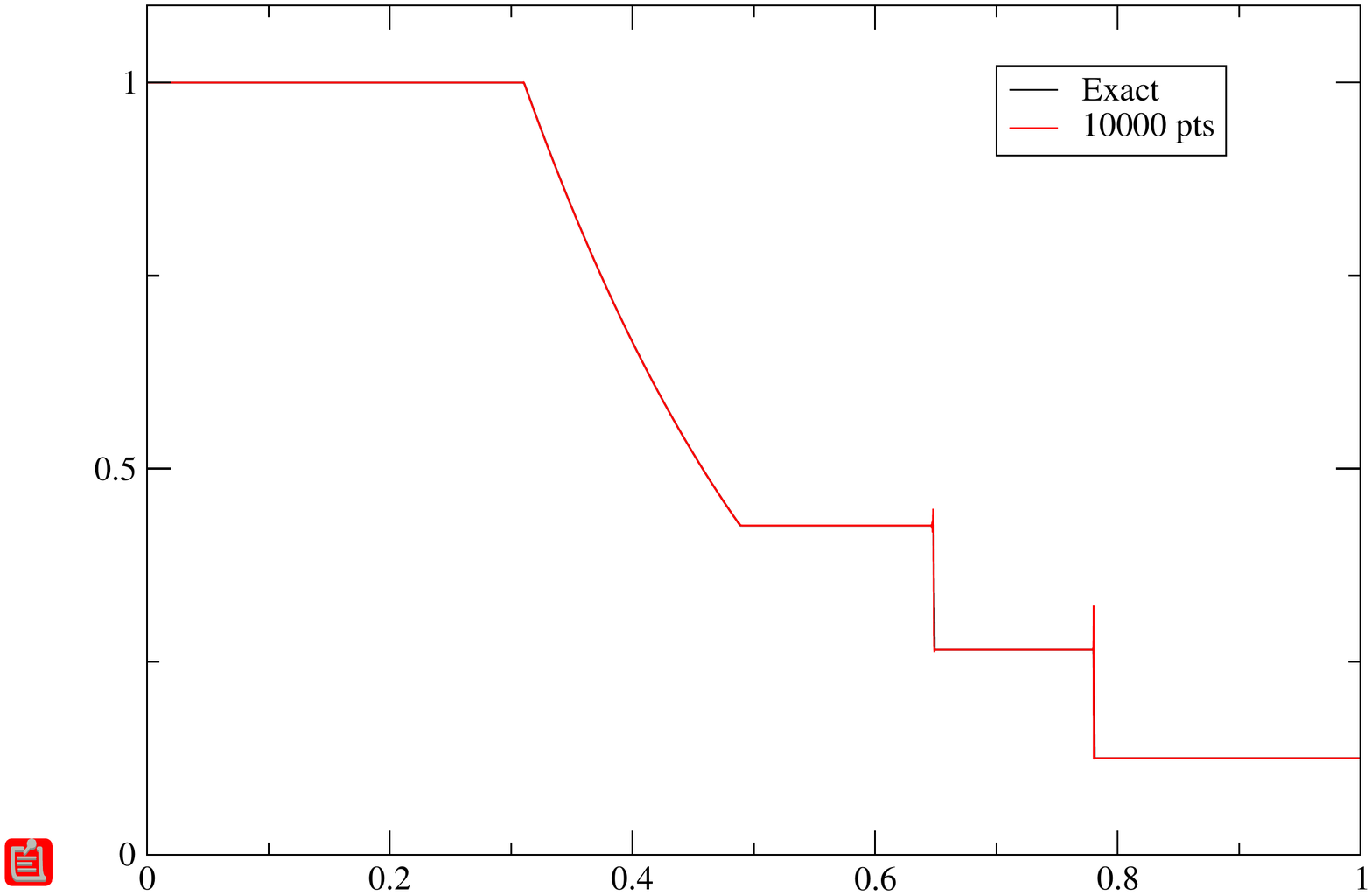}}
\subfigure[$p$]{\includegraphics[width=0.45\textwidth]{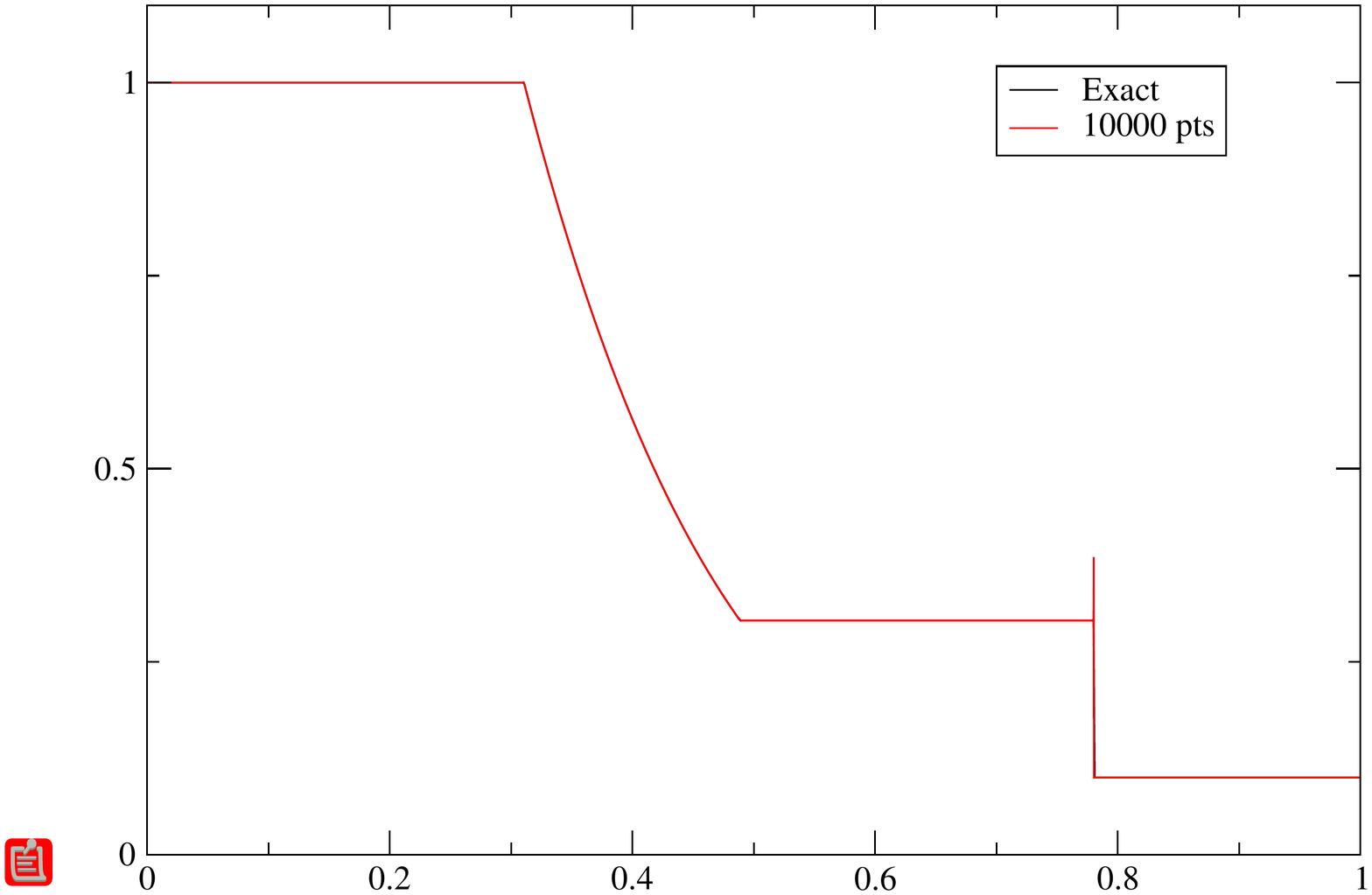}}
\subfigure[$u$]{\includegraphics[width=0.45\textwidth]{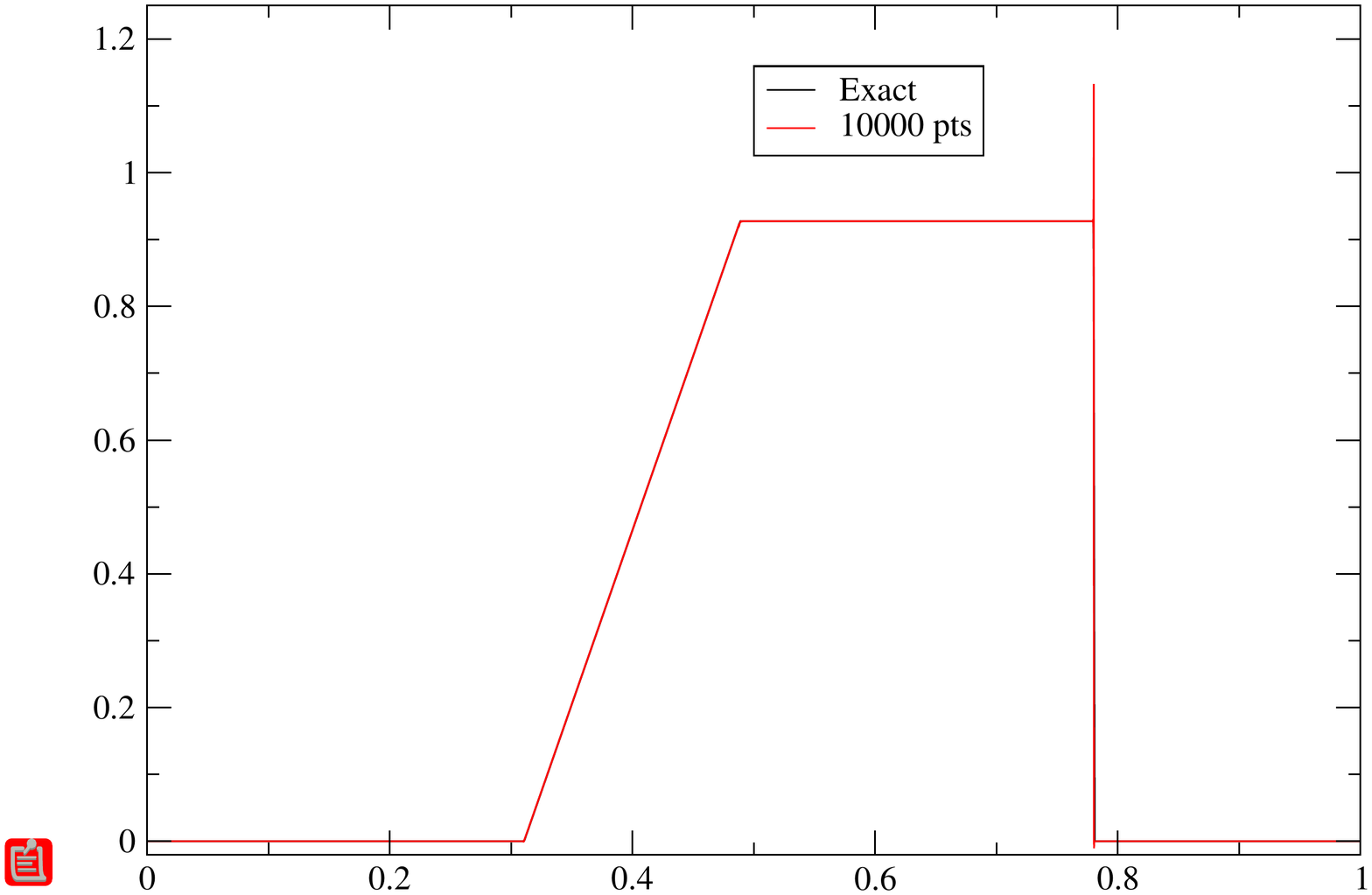}}
\subfigure[$p$ zoom]{\includegraphics[width=0.45\textwidth]{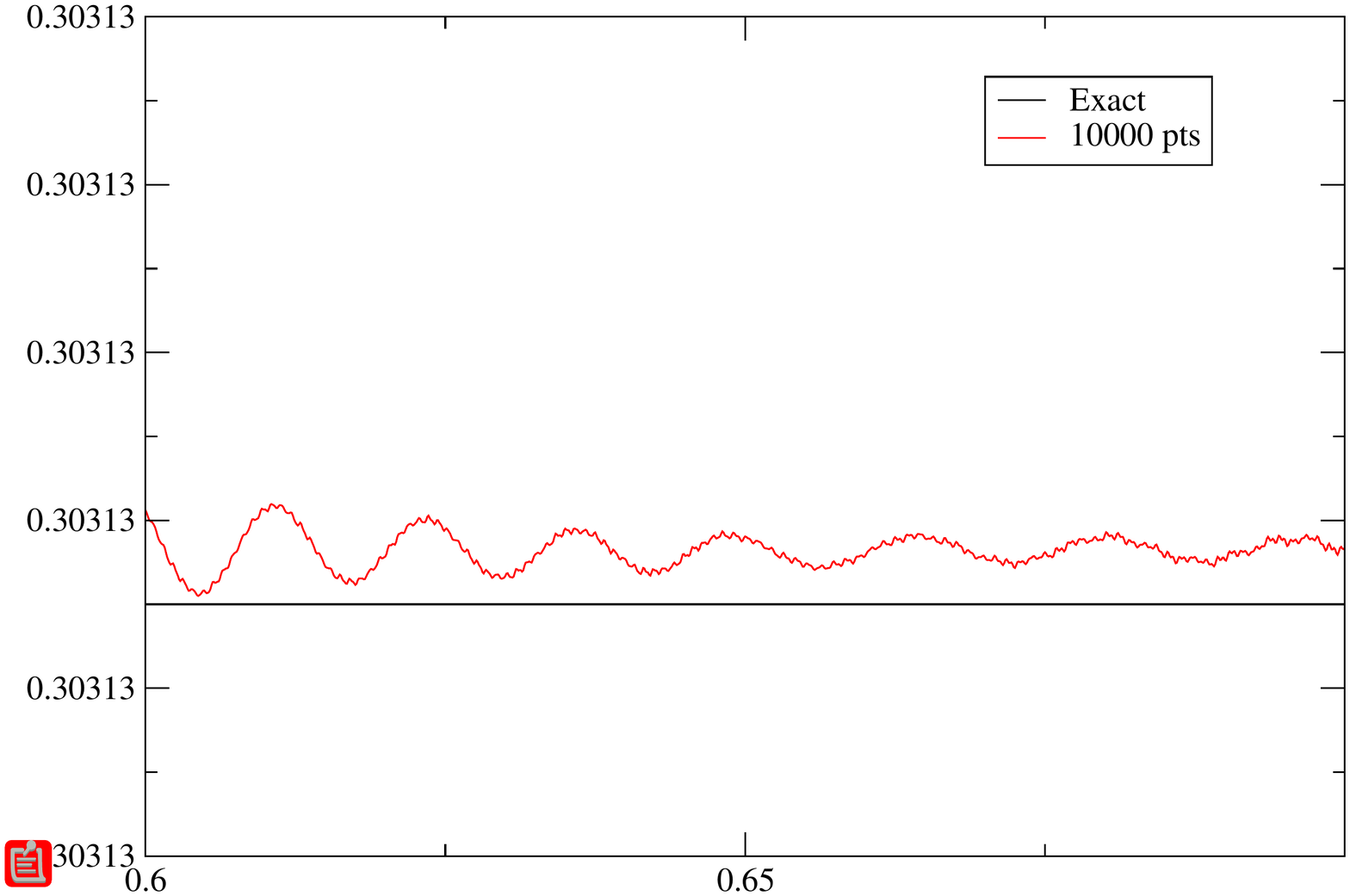}}
\subfigure[$u$ zoom]{\includegraphics[width=0.45\textwidth]{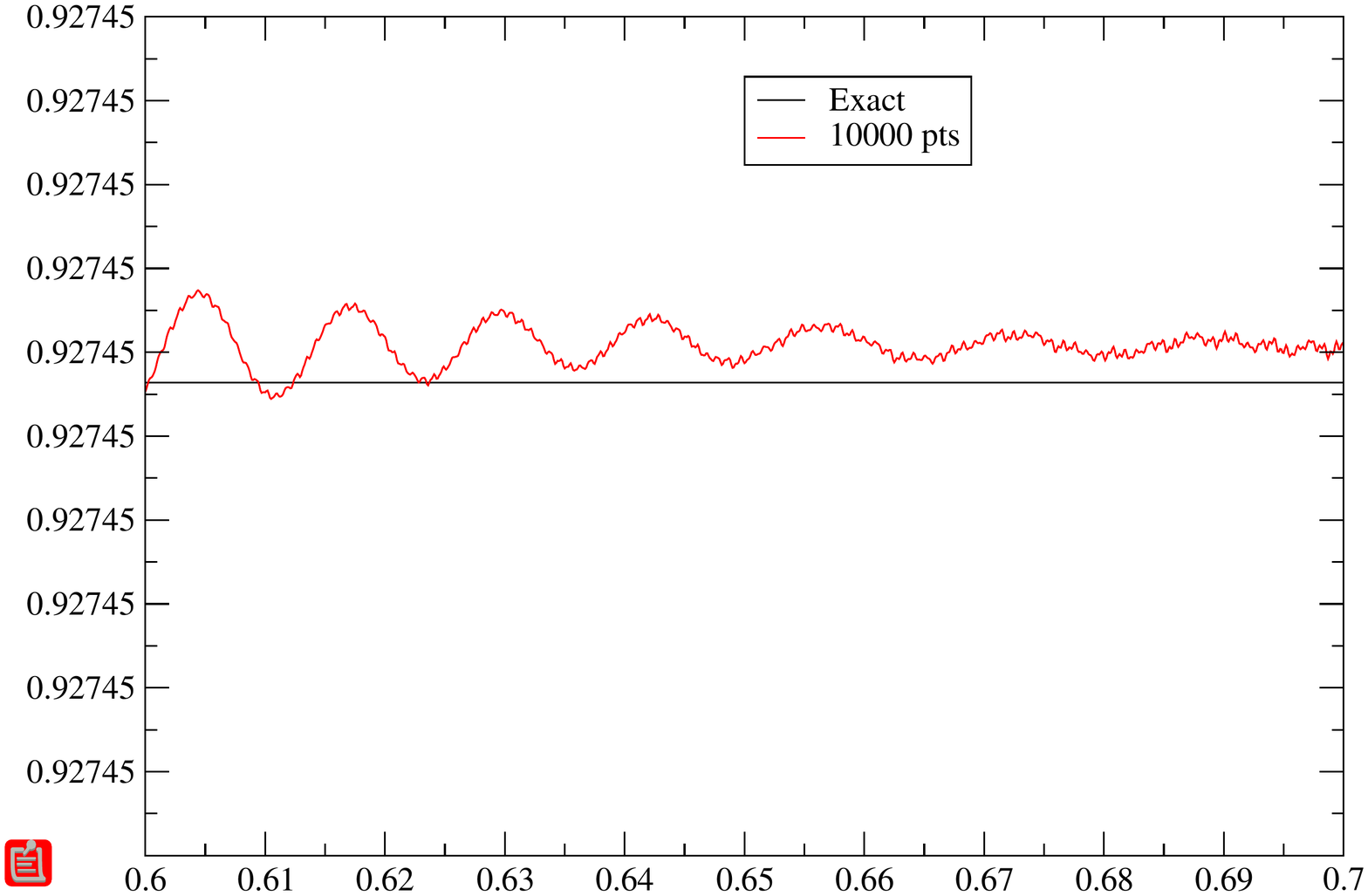}}
\end{center}
\caption{\label{fig:sod:entro_10000} Solution with the variables (s,u,p) for 10000 points, comparison with the exact solution. Cfl=0.1, no mood
The zoomed figures are for $x\in [0.6,0.7]$ and the ticks are for $10^{-7}$. We plot $u$ and $p$ across the contact}
\end{figure}

We can observe a numerical convergence to the exact one in all cases. In the appendix \ref{sec:irreg} we show some results on irregular meshes, with the same conclusions.
{
\subsection{A smooth case}
We consider a fluid with $\gamma=3$: the characteristics are straight lines. The initial condition is inspired from Toro: in $[-1,1]$, 
\begin{equation}\label{condition}
\begin{split}
\rho_0(x)&=1+\alpha\sin(2\pi x)\\
u_0(x)&=0\\
p_0(x0&=\rho_0(x)^\gamma
\end{split}
\end{equation}
The classical case is is for $\alpha=0.999995$ where vaccum is almost reached. Here, since we do not want to test the robustness of the method, we take $\alpha=\frac{3}{4}$.  The final time is set to $T=0.1$.

The exact density and velocity in this case can be obtained by the method of characteristics and is explicitly given by
\begin{equation*}
\rho(x,t) = \dfrac12\big( \rho_0(x_1) + \rho_0(x_2)\big), \quad u(x,t) = \sqrt{3}\big(\rho(x,t)-\rho_0(x_1) \big),
\end{equation*}
where for each coordinate $x$ and time $t$ the values $x_1$ and $x_2$ are solutions of the nonlinear equations
\begin{align*}
& x + \sqrt{3}\rho_0(x_1) t - x_1 = 0, \\
& x - \sqrt{3}\rho_0(x_2) t - x_2 = 0.
\end{align*}
A example of numerical solution, superimposed with the exact one, is shown on figure \ref{fig:smooth}. It is obtained with the third order (time and space) scheme, and here we have used the model $(\rho,u,p)$. The CFL number is set to $0.2$.
\begin{figure}[h]
\begin{center}
\subfigure[$\rho$]{\includegraphics[width=0.45\textwidth]{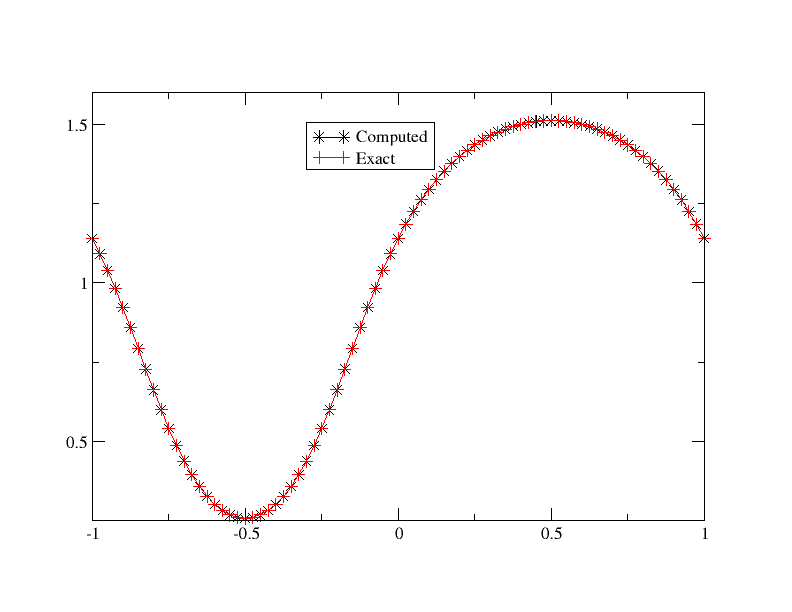}}
\subfigure[$u$]{\includegraphics[width=0.45\textwidth]{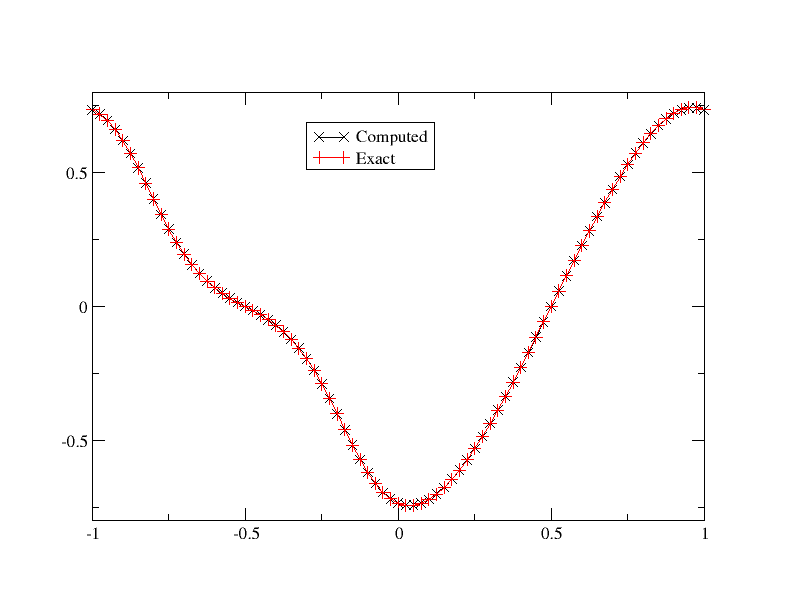}}
\subfigure[$p$]{\includegraphics[width=0.45\textwidth]{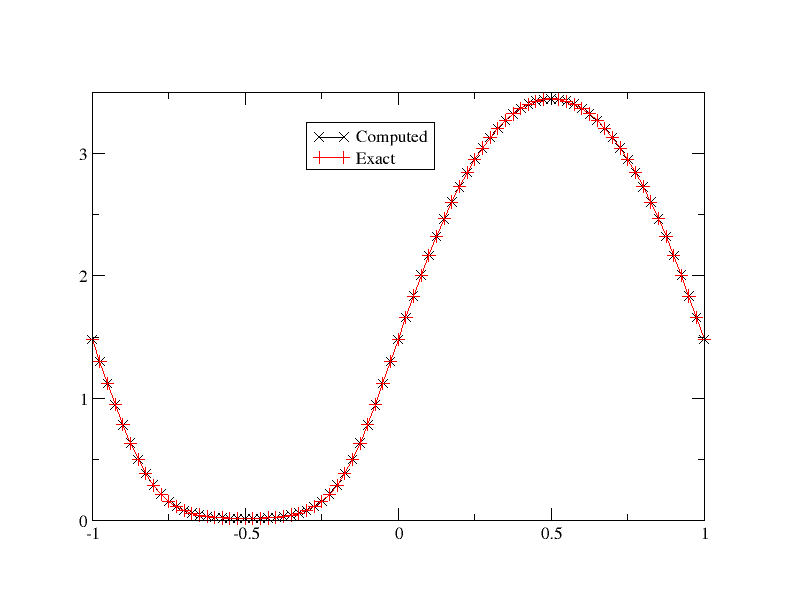}}
\end{center}
\caption{\label{fig:smooth} Solution (numerical and exact) for the conditions \eqref{condition}. The number of grid points is set to $80$, with periodic boundary conditions.}
\end{figure}
The errors are shown in table \ref{table:smooth}.
\begin{table}[h]
\begin{center}
\begin{tabular}{|c||c|c||c|c||c|c|}\hline
  $h=1/N$ & $L^1$ & & $L^2$ & & $L^\infty$ & \\\hline
  20   &$2.136\;10^{-4}$   & $-$        & $2.968\;10^{-4}$ & $-$      &$6.596\;10^{-4}$&$-$\\
  40   &$1.912\;10^{-5}$ & $-3.48$  &$2.702\;10^{-5}$ &$-3.45$  &$5.750\;10^{-5}$&$-3.52$\\
  80   & $1.398\;10^{-6}$ & $-3.77$  & $2.138\;10^{-6}$ & $-3.65$ &$4.673\;10^{-6}$&$-3.62$\\
 160  & $1.934\;10^{-7}$ & $-2.85$   &$2.595\;10^{-7}$ & $-3.04$ &$5.753\;10^{-7}$&$-3.02$\\
  320 & $ 3.641\;10^{-8}$& $-2.40$    &$5.523\;10^{-8}$& $-2.23$ &$1.276\;10^{-7}$&$-2.17$\\
    \hline
\end{tabular}
\end{center}
\caption{\label{table:smooth} $L^1$, $L^2$ and $L^\infty$ error for the initial conditions \eqref{condition} with the third order scheme.}
\end{table}
The errors, computed in $[-1,1]$ are in reasonable agreement with the $-3$ expected slopes.
We also have done the same test with the non linear stabilisation procedure described in section \ref{sec:mood}. \emph{Exactly} the same errors are obtained: the order reduction test are never activated.}
\subsection{Shu-Osher case}
The initial condition are:
$$(\rho, u, p)=\left \{\begin{array}{ll}
(3.857143, 2.629369, 10.3333333) &\text{if } x<-4\\
(1+0.2\sin(5x), 0, 1) &\text{else}
\end{array}\right .
$$
on the domain $[-5,5]$ until $T=1.8$. We have used the combination \eqref{eq:conservative}-\eqref{eq:primitive}, since the other one seems less robust. The density is compared to a reference solution (obtained with a standard finite volume scheme with $20\; 000$ points, and the solution obtained with the third order scheme with $CFL=0.3$ and $200$, $400$,  $800$  and $1600$ points. The mood procedure uses the first order upwind scheme if a PAD, a NaN or DMP is detected, the other cases use the 3rd order scheme.
The solutions are displayed in \ref{figshu}. With little resolution, the results are very close of the reference one.
\begin{figure}[h]
\subfigure[]{\includegraphics[width=0.5\textwidth]{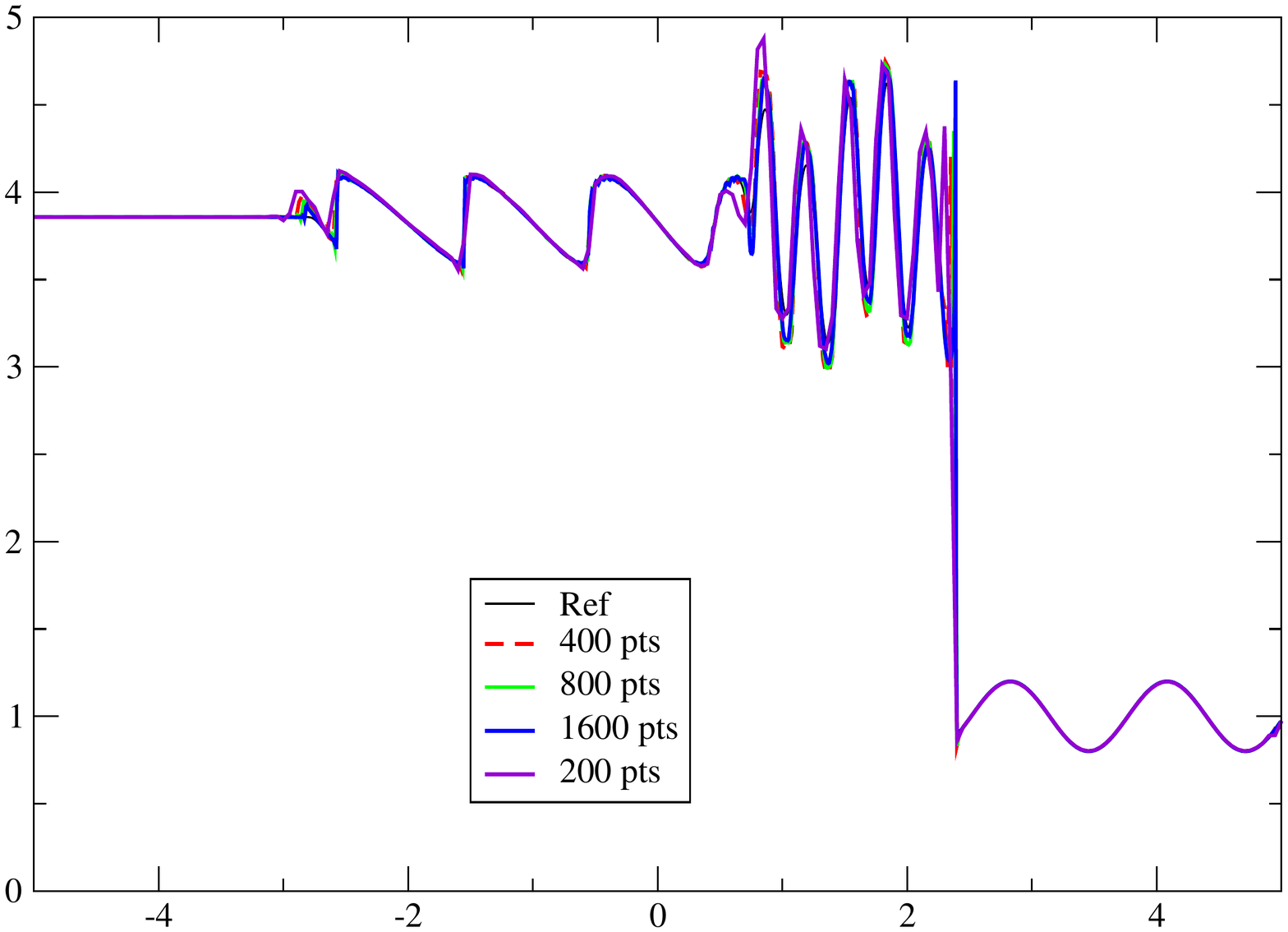}}
\subfigure[]{\includegraphics[width=0.5\textwidth]{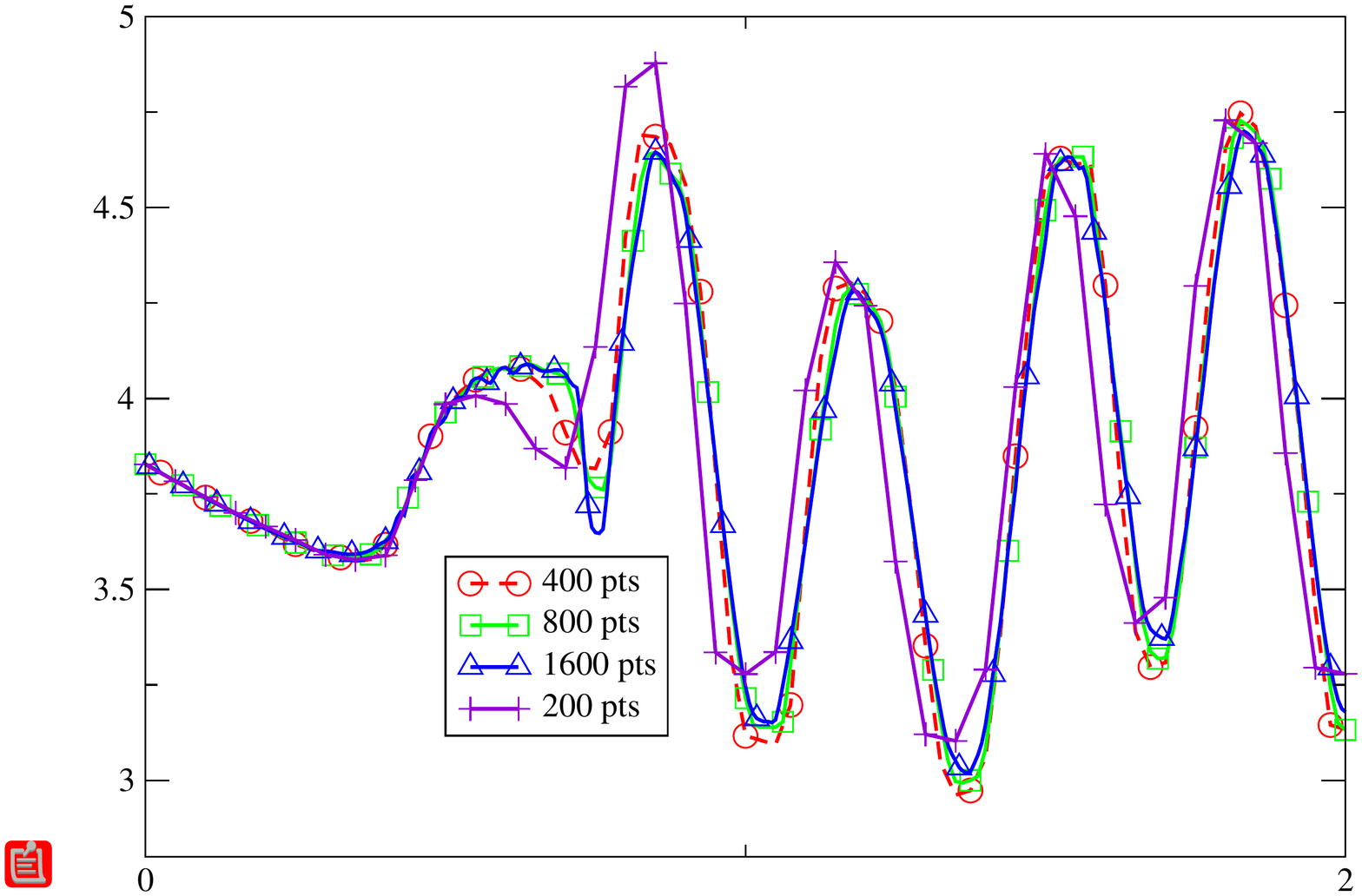}}
\caption{\label{figshu} (a): Solution of the Shu Osher problem , (b): zoom of the solution around the shock.}
\end{figure}
For figure \ref{figshu}, the second order scheme is used as a rescue scheme.

\subsection{Le Blanc case}
The initial conditions are
$$(\rho,u,e)=\left\{\begin{array}{ll}
(1,0,0.1) & \text{ if } x\in [-3,3]\\
(0.001,0.10^{-7} ) & \text{ if } x\in [3,6]
\end{array}
\right .
$$
where $e=(\gamma-1) p$ and $\gamma=\tfrac{5}{3}$. The final time is $t=6$.  This is a very strong shock tube. The combination \eqref{eq:conservative}-\eqref{eq:primitive}. It is not possible to run higher that first order without the MOOD procedure. We show the second and third order results are shown on figure \ref{fig:leblanc}, and zooms around the shocks and the fan are showed in \ref{fig:leblanc2}.
\begin{figure}[h]
\begin{center}
\subfigure[$\rho$]{\includegraphics[width=0.45\textwidth]{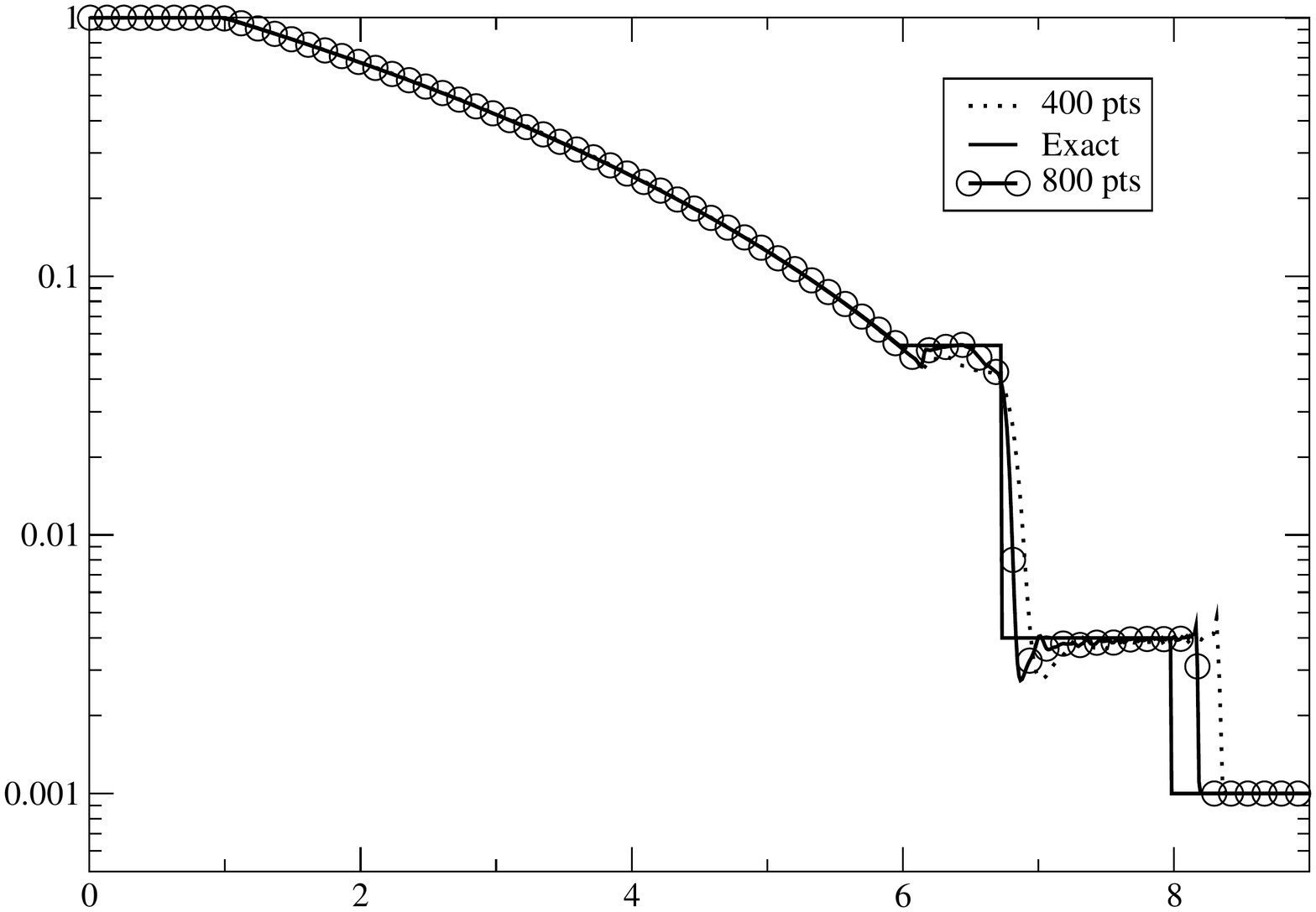}}
\subfigure[$\rho$]{\includegraphics[width=0.45\textwidth]{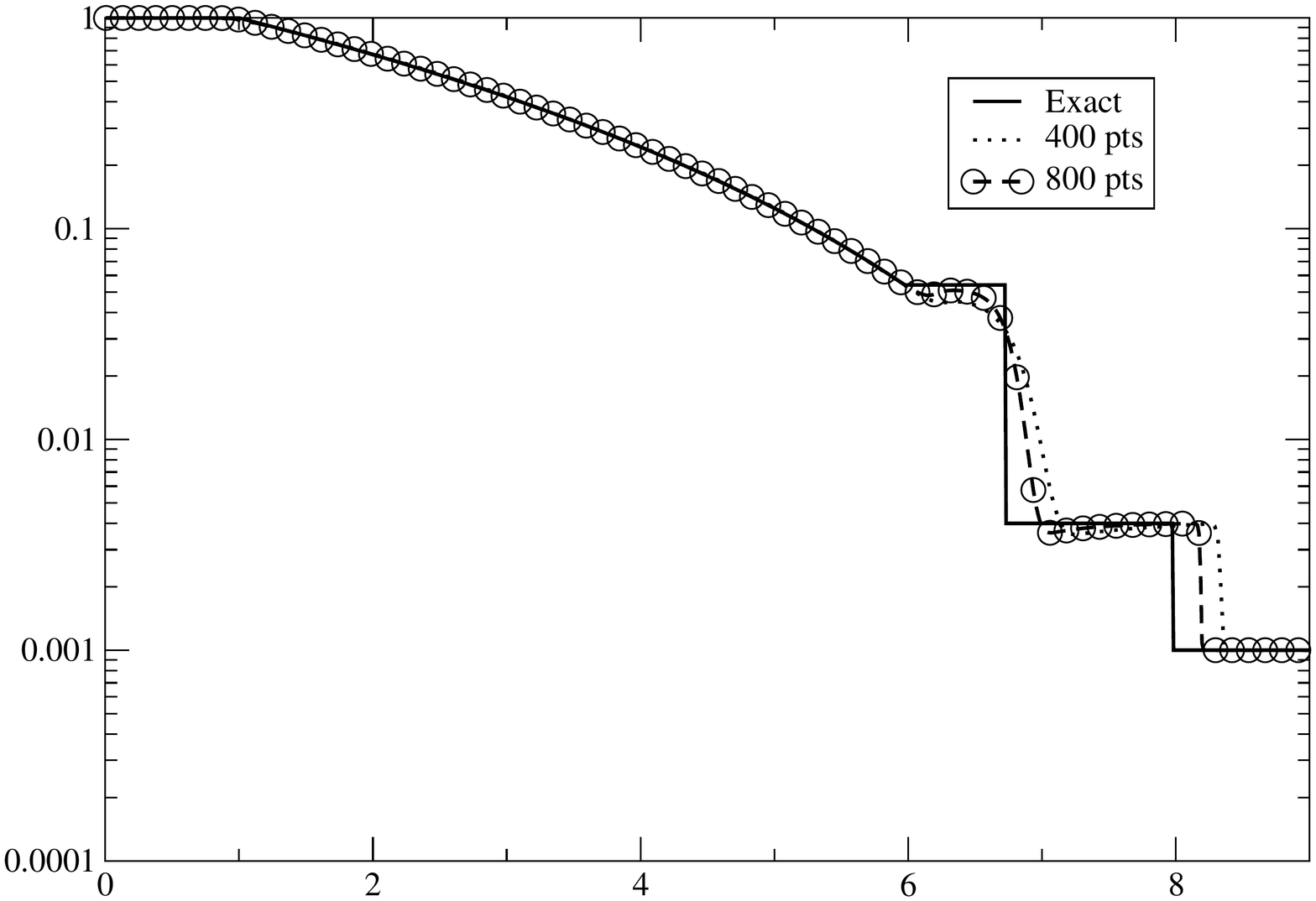}}
\subfigure[$p$]{\includegraphics[width=0.45\textwidth]{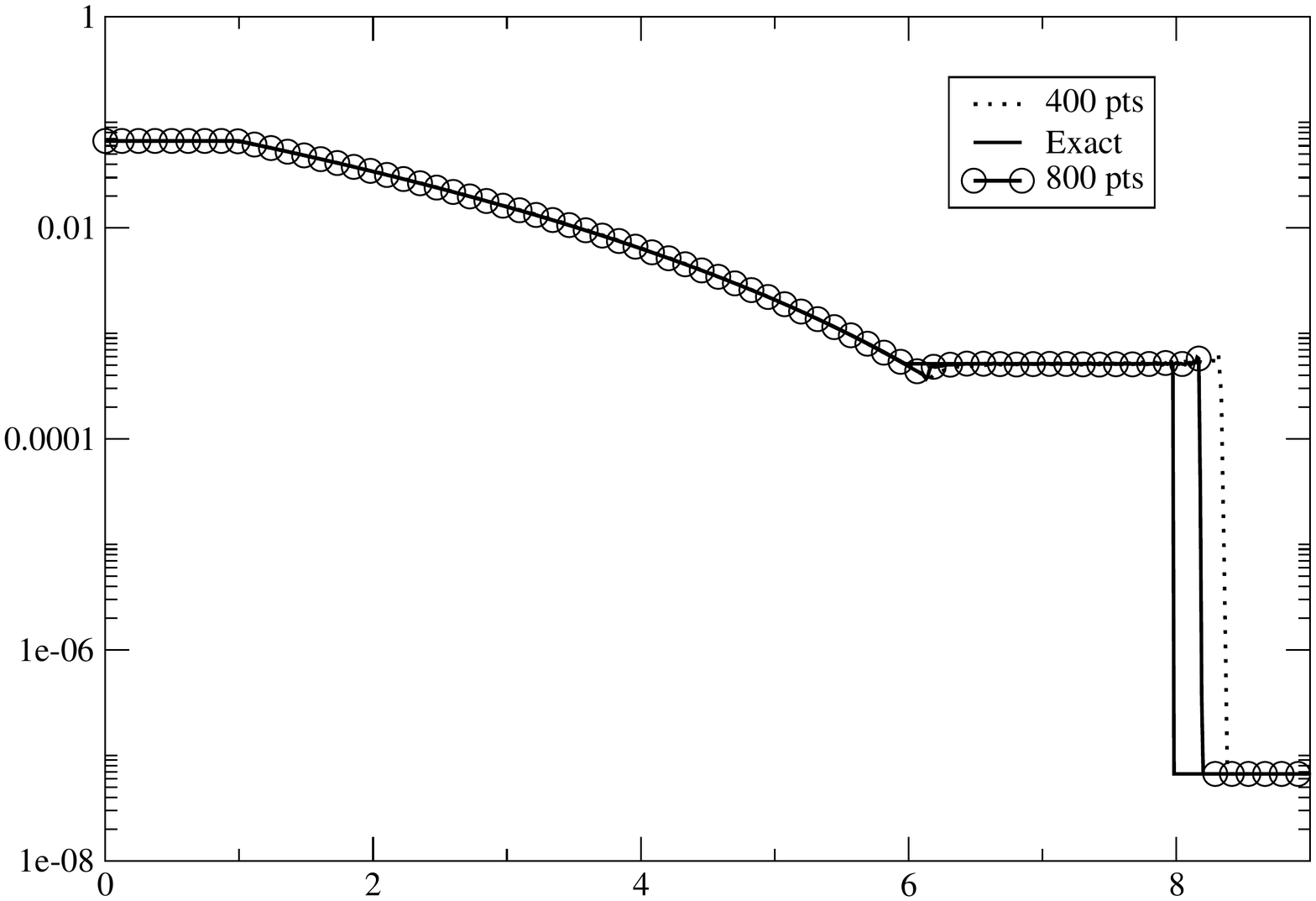}}
\subfigure[$p$]{\includegraphics[width=0.45\textwidth]{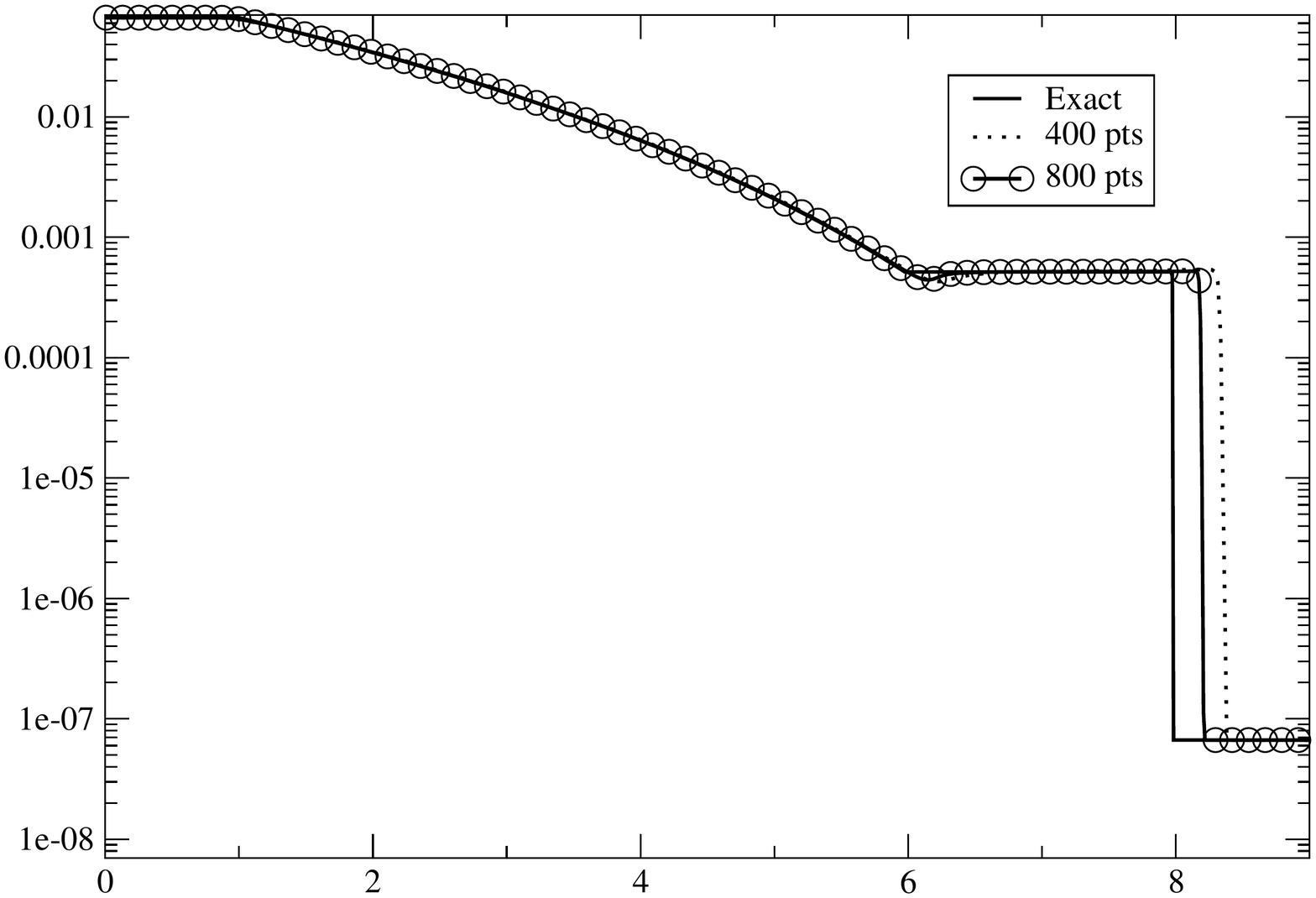}}
\subfigure[$u$]{\includegraphics[width=0.45\textwidth]{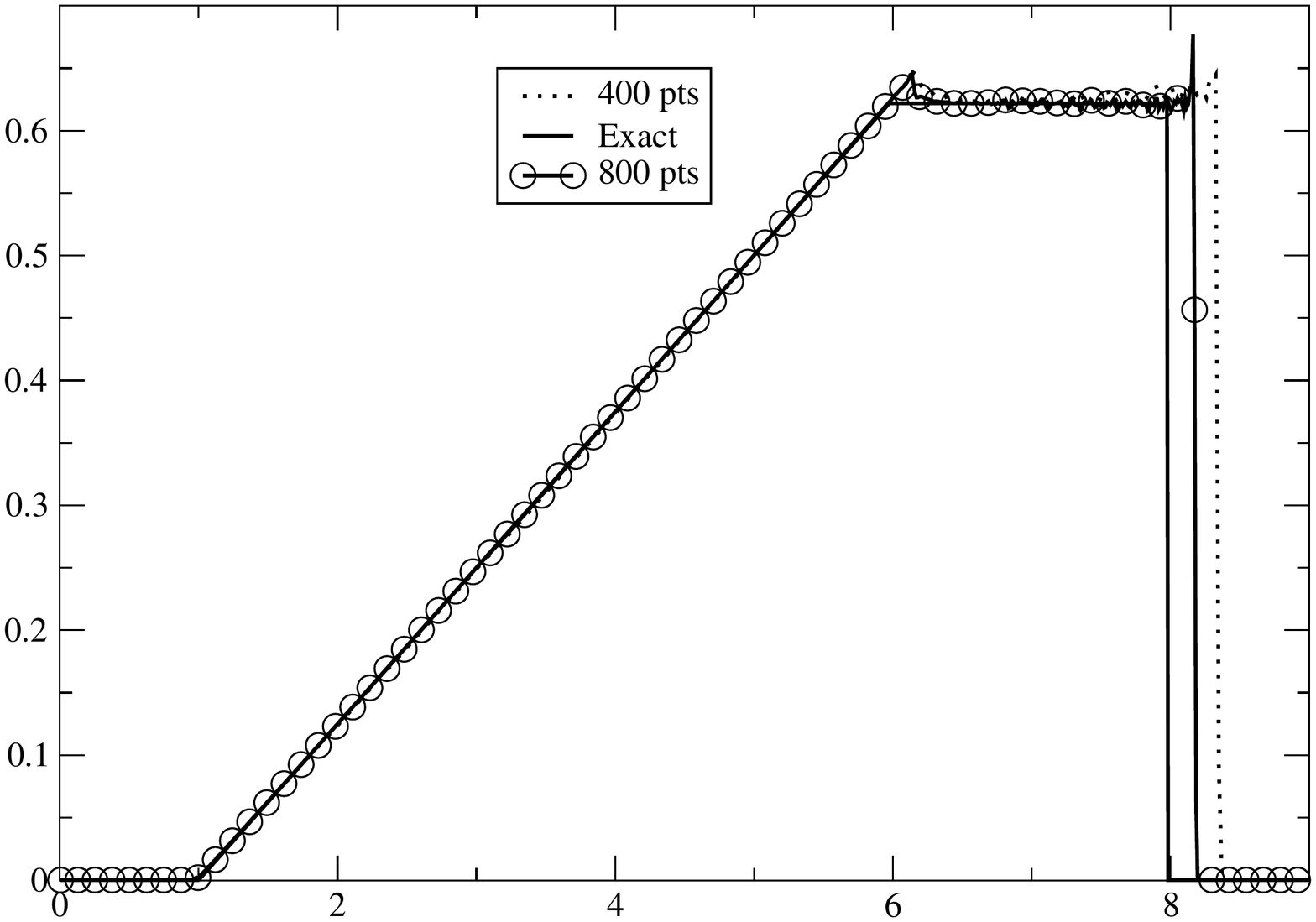}}
\subfigure[$u$]{\includegraphics[width=0.45\textwidth]{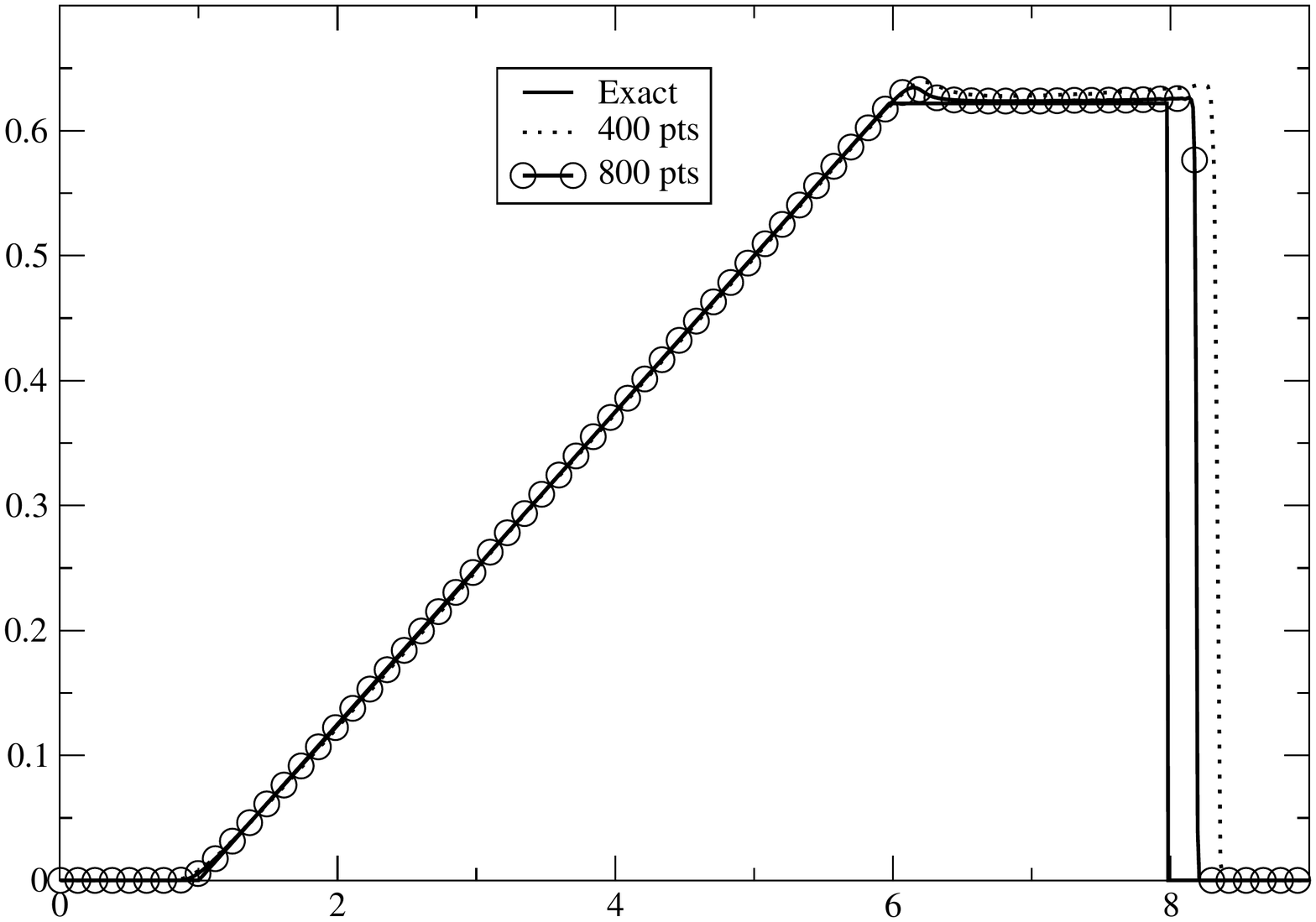}}
\end{center}
\caption{\label{fig:leblanc} Le Blanc test case, $CFL=0.1$, from 400 to 800 points. Left column: MOOD test on $\rho$ and $p$, second order, right column: MOOD test on $\rho$ and $p$, third order}
\end{figure}
\begin{figure}[h]
\begin{center}
\subfigure[]{\includegraphics[width=0.45\textwidth]{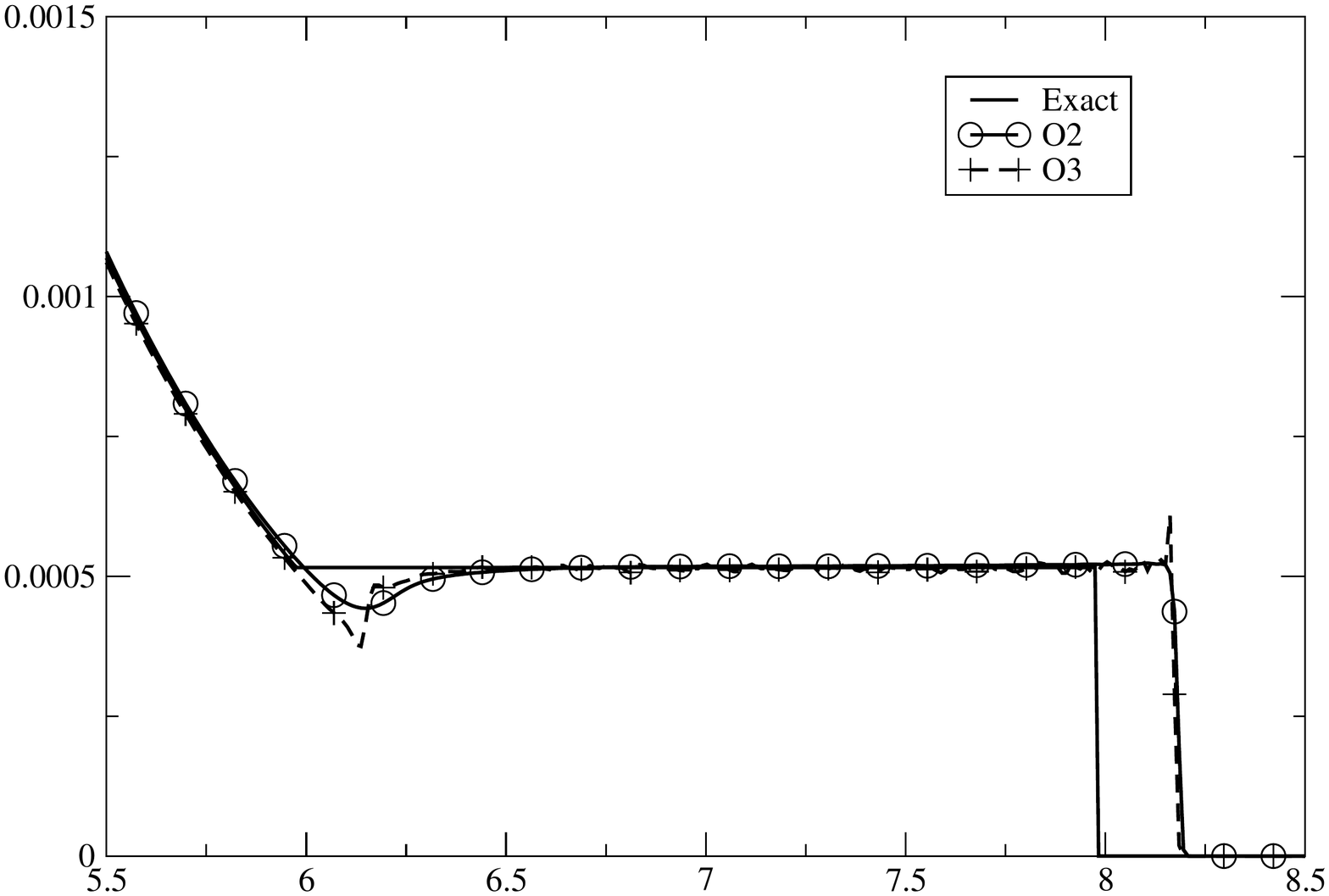}}
\subfigure[]{\includegraphics[width=0.45\textwidth]{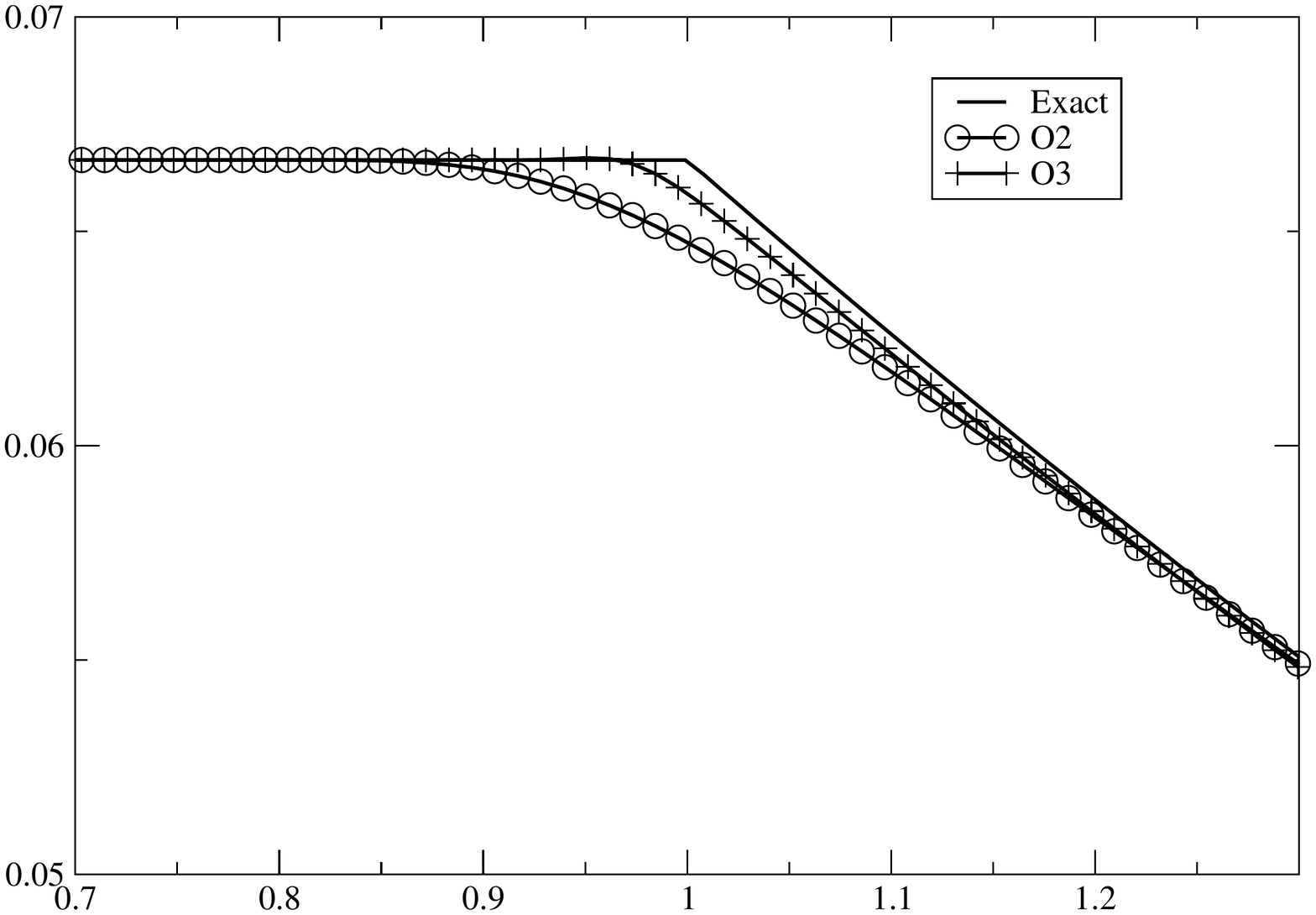}}
\end{center}
\caption{\label{fig:leblanc2} Le Blanc test case, zooms, comparison on the pressure between second order and third order with 400 points}
\end{figure}

At time $t=6$ the shock wave should be at $x=8$: in addition to the extreme conditions, it is generally difficult to get a a correct position of the shock wave; this is why a convergence study is shown in figure \ref{fig:leblanc_conv}. It is performed with 400, 800, 10000 grid points, and the third order SSPRK3 scheme with CFL=0.1. It is compared to the exact solution, and the results are good, see for example \cite{Ramani_2019} for a comparison with other methods, or \cite{loubere} for a comparison with Lagrangian methods.
\begin{figure}[h]
\begin{center}
\subfigure[]{\includegraphics[width=0.45\textwidth]{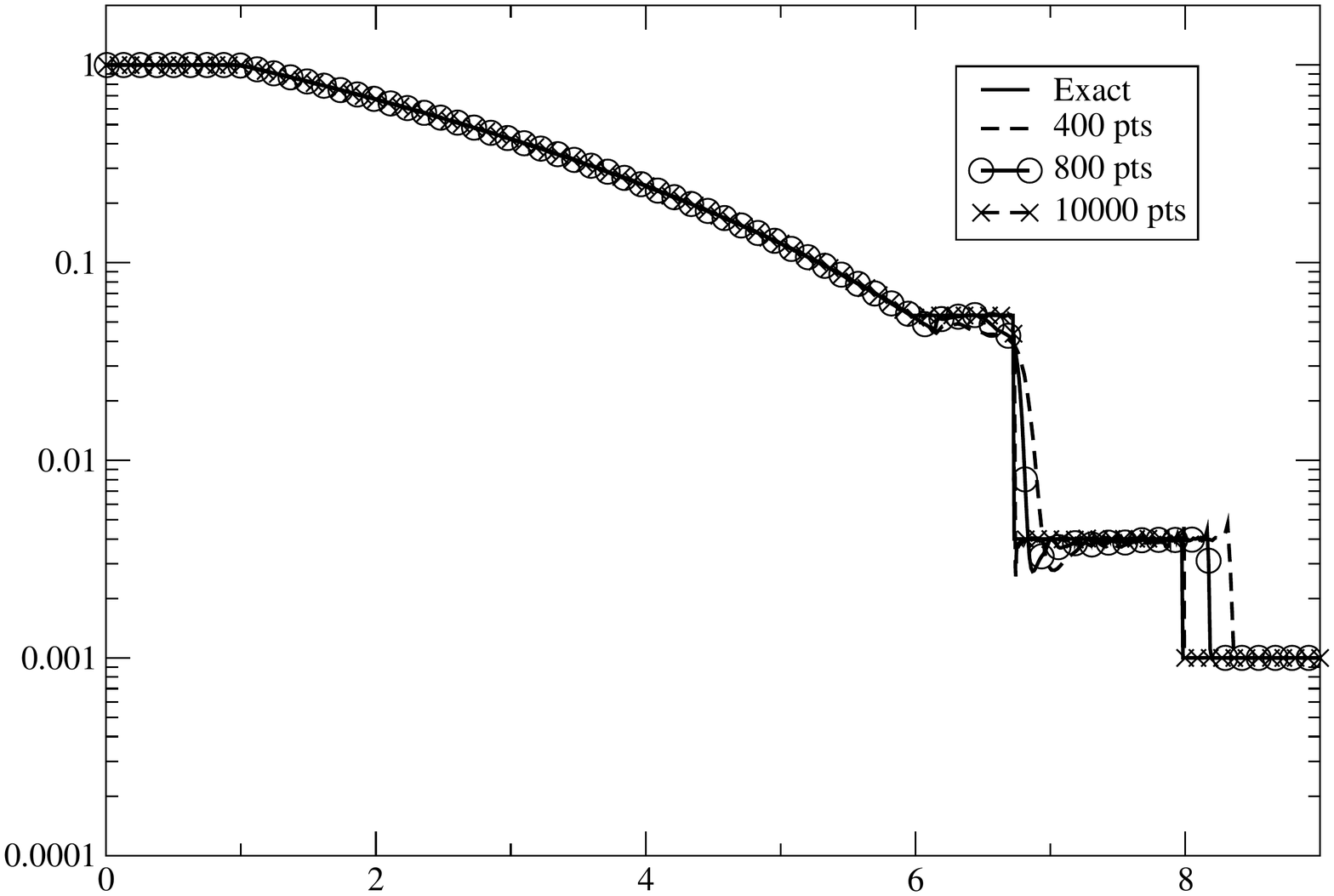}}
\subfigure[]{\includegraphics[width=0.45\textwidth]{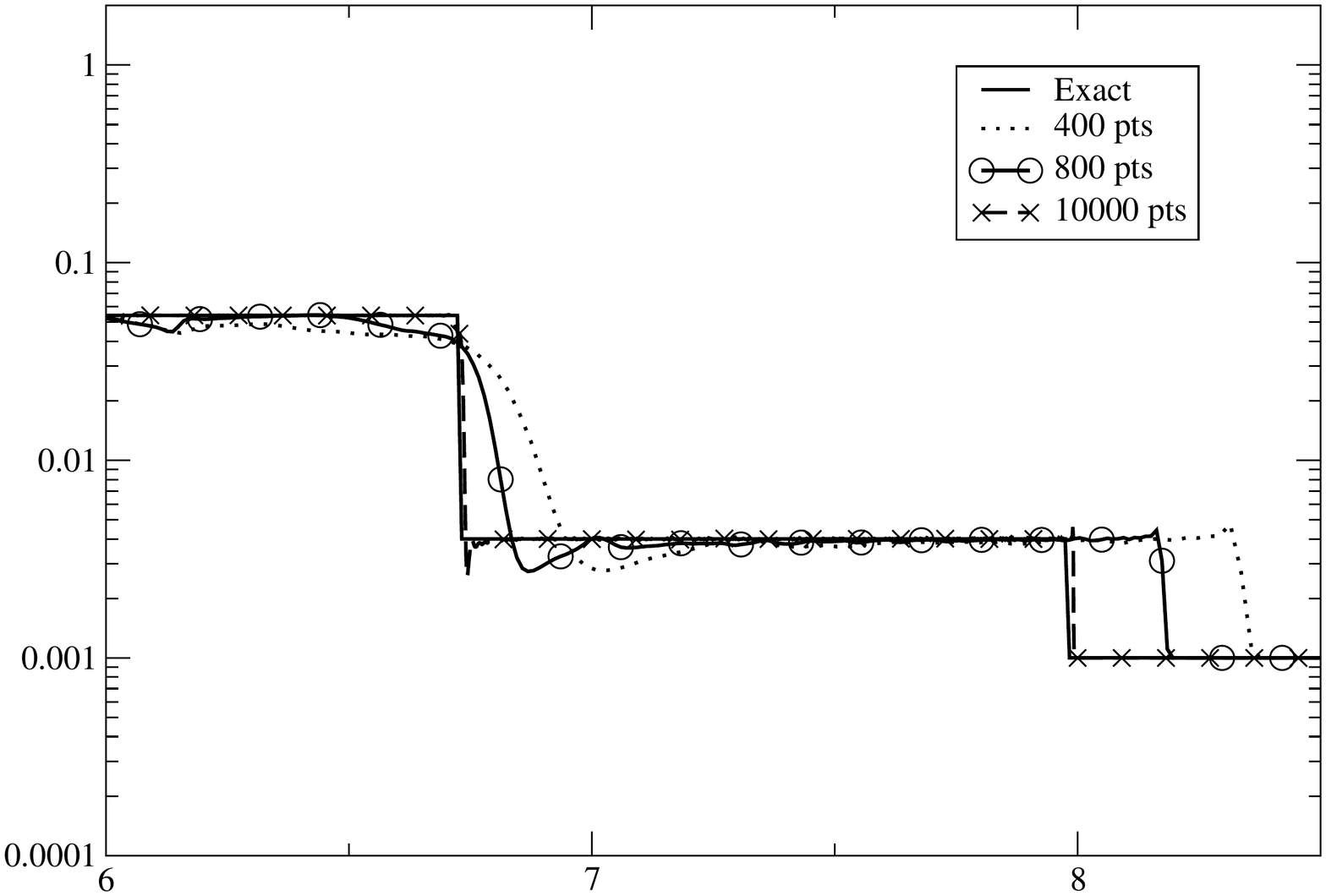}}
\subfigure[]{\includegraphics[width=0.45\textwidth]{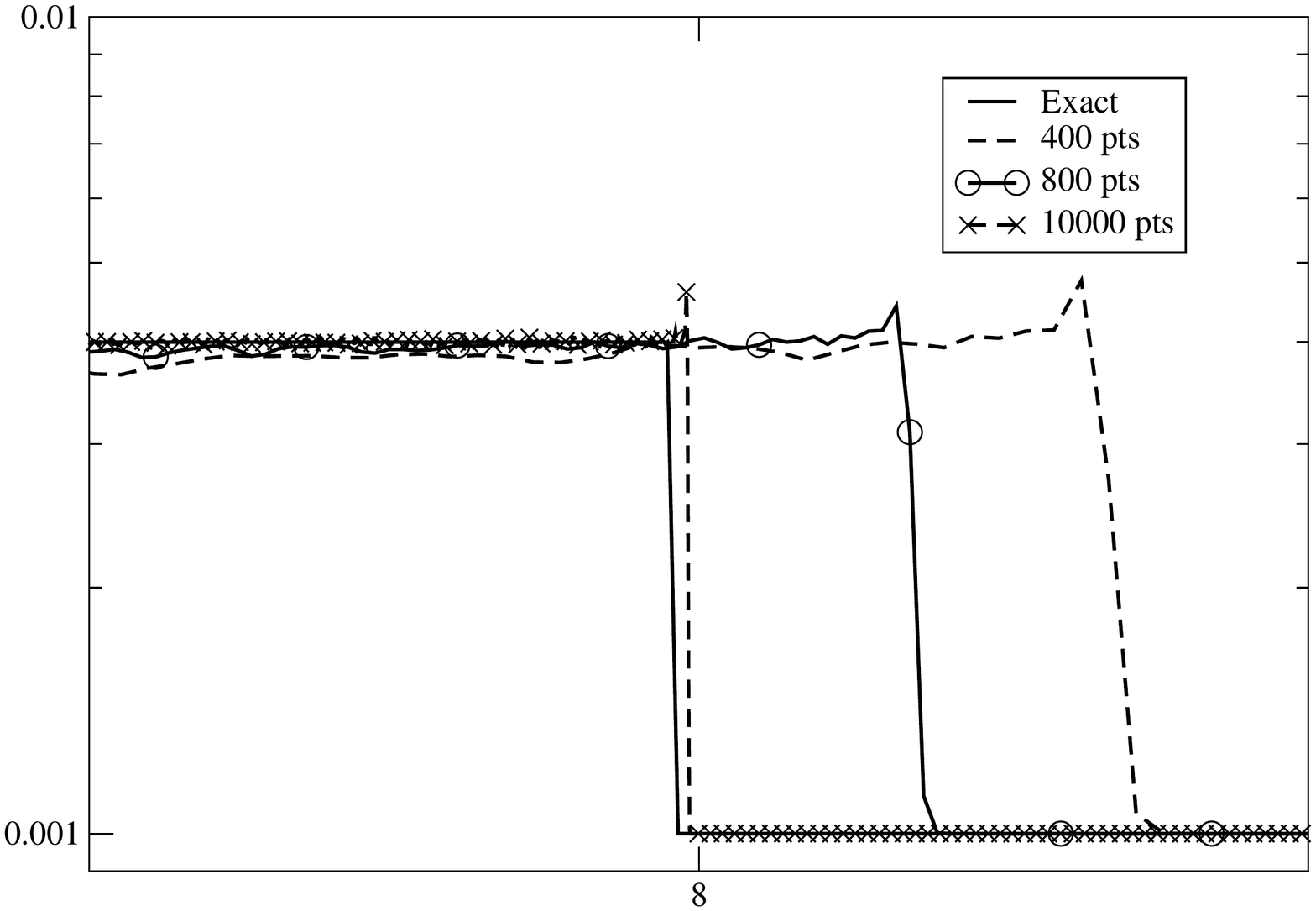}}
\end{center}
\caption{\label{fig:leblanc_conv} Convergence study on the density for the LeBlanc test case.}
\end{figure}

\section{Conclusion}
This study is preliminary and should be seen as  a proof of concept. We show how to combine, without any test, several formulations of the same problem, one conservative and the other ones in non conservative form, in order to compute the solution of hyperbolic systems. The emphasis is mostly put on the Euler equations.

We explain why the formulation leads to a method that satisfies a Lax-Wendroff like theorem. We also propose a way to provide non linearity stability, this method works well but is not yet completely satisfactory.

Besides the theoretical results, we also show numerically that  we get the convergence to the correct weak solution. This is done on standard benchmark problems, some being very challenging.

We intend to extend the method to several space dimensions, and improve the limiting strategy. Different systems, such as the shallow water system, will also be considered.

\section*{Acknowledgements.}
This work was done while the author was partially funded by SNF project  	200020$\_$175784. The support of Inria via the  International Chair of the author at Inria Bordeaux-Sud Ouest is also acknowledged. Discussions with Dr. Wasilij Barsukow are acknowledged, as well as the encouragements of Anne Burbeau (CEA DAM, France).
Last, I would like to thank, warmly, the two anonymous referees: their critical comments have led to big improvements.

\bibliographystyle{unsrt}
\bibliography{biblio}
\appendix
{ \section{Proof of proposition \ref{LxW}.}\label{appendix}
We show the proposition \ref{LxW} in the scalar case, the system case is identical.

We start by some notations: $\R$ is subdivided into intervals $K_{j+1/2}=[x_j,x_{j+1}]$ with $x_j<x_{j+1}$, and $h$ will be the maximum of the length of the $K_{j+1/2}$. On each interval, from the 
point values $u_i$ and $u_{i+1}$, as well as the average $\bar{u}_{j+1/2}$ we can construct a quadratic polynomial . From this and as above, we can construct a globally continuous piecewise quadratic function, that  for simplicity of notations we will denote by $R_{u _\Delta}$.

 Let $T>0$ and a time discretisation $0<t_1<\ldots <t_n<\ldots <t_N\leq T$ of $[0,T]$.  We define $\Delta t_n=t_{n+1}-t_n$ and $\Delta t=\max\limits_n \Delta t_n$.  We are given the sequences $\{u_j^p\}_{j\in \Z}^{p=0\ldots N}$ and $\{{\bar u}_{j+1/2}^n\}_{j\in \Z}^{p=0, \ldots, N}$. We can define  a function $u_{\Delta}$ by:
 $$\text{ if }(x,t)\in [x_j,x_{j+1}]\times [t_{n}, t_{n+1}[, \text{ then } u _\Delta(x,t)=R_{u^n_\Delta}(x).$$
 The set of these functions is denoted by $X _{\Delta}$ and is equipped with the $L^\infty$ and $L^2$ norms.

We have the following lemma
\begin{lemma}\label{weakBV}
Let $T>0$ , $\{t_n\}_{n=0, \ldots , N}$ an increasing  subdivision of $[0,T]$, $[a,b]$ a compact of $\R$. Let $(u _\Delta)_{h}$ a sequence of functions of $X _\Delta$ defined on $\R\times \R^+$. We assume that there exists $C\in \R$ independent of $\Delta$ and $\Delta t$, and $\bu\in L^2_{loc}([a,b]\times [0,T])$ such that
$$\sup\limits _\Delta\sup\limits_{x,t}\vert u _\Delta(x,t)\vert \leq C\quad \text{ and } \lim\limits _{\Delta,  \Delta t\rightarrow 0}\vert u _\Delta-u\vert_{L^2([a,b]\times[0,T])}=0.$$
 Then
\begin{equation}
\label{appendix:1}\lim\limits_{\Delta, \Delta t\rightarrow 0}\sum_{n=0}^N\Delta t_n \bigg [\sum_{j\in \Z}\Delta_{j+1/2}\bigg ( \vert u_{j}^n-\bar{u} _{j+1/2}^{n}\vert + \vert u_{j+1}^n-\bar{u} _{j+1/2}^{n}\vert+\vert u_{j}^n-u_{j+1}^n\vert \bigg )\bigg ] =0.\end{equation}
\end{lemma}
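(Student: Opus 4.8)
\emph{Proof sketch.} The plan is to first prove \eqref{appendix:1} with each absolute value replaced by its square — which is really an $L^2$ statement about the distance between $u_\Delta$ and its cell averages — and then to recover the stated sum of first powers by Cauchy--Schwarz, using that the total space--time volume of the cells involved stays bounded. Since in the Lax--Wendroff application the test function is compactly supported, it suffices to prove \eqref{appendix:1} with $\sum_j$ restricted to those $j$ with $K_{j+1/2}\cap[a,b]\neq\emptyset$; enlarging $[a,b]$ by one mesh cell we may assume that $[a,b]$ is a union of cells, and below $\sum_j$ denotes this restricted sum.

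Let $\bar u_\Delta$ be the function equal to $\bar u_{j+1/2}^n$ on $K_{j+1/2}\times[t_n,t_{n+1})$. Since $\int_0^1\ell_0=\int_0^1\ell_1=0$ and $\int_0^1\ell_{1/2}=1$, we have $\bar u_{j+1/2}^n=\frac{1}{\Delta_{j+1/2}}\int_{K_{j+1/2}}u_\Delta(x,t_n)\,dx$; equivalently $\bar u_\Delta(\cdot,t)=P_\Delta u_\Delta(\cdot,t)$, where $P_\Delta$ is the $L^2$-orthogonal projection onto the functions that are constant on each $K_{j+1/2}$. As $P_\Delta u_\Delta(\cdot,t)$ is the $L^2([a,b])$-best approximation of $u_\Delta(\cdot,t)$ in that subspace, for every $t$
\begin{equation*}
\Vert u_\Delta(\cdot,t)-\bar u_\Delta(\cdot,t)\Vert_{L^2([a,b])}\le \Vert u_\Delta(\cdot,t)-P_\Delta u(\cdot,t)\Vert_{L^2([a,b])}\le \Vert u_\Delta(\cdot,t)-u(\cdot,t)\Vert_{L^2([a,b])}+\Vert u(\cdot,t)-P_\Delta u(\cdot,t)\Vert_{L^2([a,b])}.
\end{equation*}
Squaring, integrating over $[0,T]$, and letting $\Delta,\Delta t\to 0$: the first term vanishes by hypothesis, and the second because $P_\Delta w\to w$ in $L^2([a,b]\times[0,T])$ for every $w\in L^2$ as the mesh size goes to $0$. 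Hence $\Vert u_\Delta-\bar u_\Delta\Vert_{L^2([a,b]\times[0,T])}\to 0$.

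Next comes the local coercivity. Writing $s=(x-x_j)/\Delta_{j+1/2}$ and using $\ell_0+\ell_{1/2}+\ell_1\equiv 1$, on $K_{j+1/2}$ one has $u_\Delta(x,t_n)-\bar u_{j+1/2}^n=(u_j^n-\bar u_{j+1/2}^n)\ell_0(s)+(u_{j+1}^n-\bar u_{j+1/2}^n)\ell_1(s)$, so that
\begin{equation*}
\int_{K_{j+1/2}}\big|u_\Delta(x,t_n)-\bar u_{j+1/2}^n\big|^2\,dx=\Delta_{j+1/2}\,Q\big(u_j^n-\bar u_{j+1/2}^n,\,u_{j+1}^n-\bar u_{j+1/2}^n\big),\qquad Q(a,b):=\int_0^1(a\ell_0+b\ell_1)^2\,ds.
\end{equation*}
The quadratic form $Q$ is positive definite (its kernel is trivial, since $\ell_0,\ell_1$ are linearly independent), hence $Q(a,b)\ge c_0(a^2+b^2)$ with $c_0>0$ an absolute constant; moreover $|u_j^n-u_{j+1}^n|^2\le 2|u_j^n-\bar u_{j+1/2}^n|^2+2|u_{j+1}^n-\bar u_{j+1/2}^n|^2$. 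Summing over $j$ and $n$ produces a constant $C$ with
\begin{equation*}
\sum_n\Delta t_n\sum_j\Delta_{j+1/2}\Big(|u_j^n-\bar u_{j+1/2}^n|^2+|u_{j+1}^n-\bar u_{j+1/2}^n|^2+|u_j^n-u_{j+1}^n|^2\Big)\le C\,\Vert u_\Delta-\bar u_\Delta\Vert^2_{L^2([a,b]\times[0,T])}\longrightarrow 0.
\end{equation*}
No mesh-regularity is needed here, as $c_0$ is the same for every cell after rescaling to $[0,1]$.

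Finally, denote by $a_{j,n}\ge 0$ any one of the three terms above. By Cauchy--Schwarz,
\begin{equation*}
\sum_n\Delta t_n\sum_j\Delta_{j+1/2}\,a_{j,n}\le\Big(\sum_n\Delta t_n\sum_j\Delta_{j+1/2}\,a_{j,n}^2\Big)^{1/2}\Big(\sum_n\Delta t_n\sum_j\Delta_{j+1/2}\Big)^{1/2},
\end{equation*}
and the second factor is the square root of the total space--time volume of the cells meeting $[a,b]$, hence bounded by a constant depending only on $T$ and $b-a$; since the first factor tends to $0$ by the previous display, \eqref{appendix:1} follows. The only genuinely delicate point is the reduction to the $L^2$ statement, i.e. the fact that the cell averages $\bar u_\Delta$ converge to the same limit $u$ as $u_\Delta$ — precisely the best-approximation-plus-density argument of the second paragraph — everything else being finite-dimensional norm equivalence on the reference cell together with Cauchy--Schwarz.
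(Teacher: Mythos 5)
Your argument is correct, but it follows a genuinely different route from the paper. The paper first bounds the sum by $C\int_0^T\!\int_a^b|u_\Delta-\bar u_\Delta|\,dx\,dt$ via an $L^1$ equivalence of norms on the reference cell, and then spends most of the proof comparing weak-$\star$ limits: it shows, via test functions and the zero-mean property $\int_{K_{j+1/2}}(\bar u_\Delta-u_\Delta)\,dx=0$, that $u_\Delta$, $\bar u_\Delta$ have the same weak-$\star$ limit $u$, that $u_\Delta^2$ and $\bar u_\Delta^2$ also share the limit $u^2$, and only then concludes $\Vert u_\Delta-\bar u_\Delta\Vert_{L^2}\to 0$ and hence the $L^1$ statement on the bounded set. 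You short-circuit all of this by observing that $\bar u_\Delta(\cdot,t)=P_\Delta u_\Delta(\cdot,t)$ with $P_\Delta$ the $L^2$-orthogonal projection onto piecewise constants (which is exactly the content of $\int_0^1\ell_0=\int_0^1\ell_1=0$, $\int_0^1\ell_{1/2}=1$), so that best approximation plus $P_\Delta w\to w$ in $L^2$ gives $\Vert u_\Delta-\bar u_\Delta\Vert_{L^2}\to 0$ in three lines; you then use the $L^2$ (rather than $L^1$) norm equivalence on the reference cell through the positive definite form $Q$, and return to first powers by Cauchy--Schwarz against the bounded space--time volume. What your route buys: it is purely Hilbertian, avoids the subsequence/weak-$\star$ bookkeeping (which in the paper is the most delicate and least tightly argued part), and in fact never uses the $L^\infty$ bound for this lemma --- the $L^2$ convergence alone suffices; the paper's route is closer to the classical finite-volume weak-BV arguments it cites. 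One point you handle more explicitly than the paper: the sum in \eqref{appendix:1} is written over all $j\in\Z$, which cannot be controlled from convergence on $[a,b]\times[0,T]$ alone; your restriction to the cells meeting $[a,b]$ is exactly what the paper's own proof tacitly does (its first display already sums only over $K_{j+1/2}\subset[a,b]$) and is what is needed in the Lax--Wendroff application, so this is a defect of the statement rather than of your proof.
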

\begin{proof}
First, because the vector space of polynomials of degree 3 on $[x_j,x_{j+1}]$ is finite dimensional and with a dimension independent of $j$, there exists $C_1$ and $C_2$ such that 
{\begin{equation*}
\begin{split}C_1 \Delta_{j+1/2}\bigg ( \vert u_{ j}^n-\bar{u} _{j+1/2}^{n}\vert + \vert u_{ j+1}^n-\bar{u} _{j+1/2}^{n}\vert\bigg )\leq  \int_{x_j}^{x_{j+1}}& \vert u _\Delta(x,t_n)-{\bar u} _{j+1/2}^n\vert  \; dx\\
&\leq C_2  \Delta_{j+1/2}\bigg ( \vert u_{ j}^n-\bar{u} _{j+1/2}^{n}\vert + \vert u_{j+1}^n-\bar{u} _{j+1/2}^{n}\vert\bigg )
\end{split}
\end{equation*} }
so that
$$\sum_{n=0}^K \Delta t_n \sum\limits_{j, K_{j+1/2}\subset [a,b]} \Delta_{j+1/2}\bigg ( \vert u_{ j}^n-\bar{u} _{j+1/2}^{n}\vert + \vert u_{ j+1}^n-\bar{u} _{j+1/2}^{n}\vert\bigg )\leq C_1^{-1} \int_0^T\int_a^b \vert  u _\Delta-\bar u _\Delta\vert \; dx,$$
where for simplicity we have introduced $\bar u _\Delta$ the function defined by:
$$\text{ if }(x,t)\in [x_j,x_{j+1}[\times [t_n, t_{n+1}[, \bar u _\Delta(x,t)={\bar u}_{j+1/2}^{n}.$$
Then we rely on classical arguments of functional analysis: since $(u _\Delta)$ is bounded, and since $L^\infty([a,b]\times [0,T]) \subset L^1([a,b]\times [0,T]) $, there exists $u'\in L^\infty([a,b]\times [0,T]$ such that $u _\Delta\rightarrow u'$ in the weak-$\star$ topology. Similarly, there exists $\bar u\in L^\infty([a,b]\times [0,T])$  such that ${\bar u} _\Delta\rightarrow \bar u$ for the weak-$\star$ topology.

Since $u _\Delta\rightarrow u$ in $L^2_{loc}$, we have $u'=u$ because $[a,b]\times [0,T]$ is bounded and $C_0^\infty([a,b]\times[0,T])$ is dense in $L^1([a,b]\times [0,T])$. Let us show that $\bar u=u$. let $\varphi\in C_0^\infty(\R\times \R^+)$. We have, setting {
{$$\bar \varphi_{j+1/2}^n=\dfrac{1}{\Delta_{j+1/2}\Delta t_n}\int_{t_n}^{t_{n+1}} \int_{x_j}^{x_{j+1}} \varphi(x,t)\; dxdt,$$}
\begin{equation*}
\begin{split}
\int_0^T \int_a^b \big ( \bar u _\Delta-u _\Delta\big ) \varphi\; dx dt&=\sum_n \sum\limits_{a\leq x_j<x_{j+1}\leq b}
\int_{t_n}^{t_{n+1}}\int_{x_j}^{x_{j+1}} \big ( \bar u _\Delta-u_\Delta\big ) \varphi\; dx dt\\
&=\sum_n \sum\limits_{a\leq x_j<x_{j+1}\leq b}\Bigg ( \int_{t_n}^{t_{n+1}}\int_{x_j}^{x_{j+1}} \big ( \bar u _\Delta-u_\Delta\big ) \varphi\; dx dt -
\int_{t_n}^{t_{n+1}}\int_{x_j}^{x_{j+1}} \big ( \bar u _\Delta-u_\Delta\big ) \bar \varphi_{j+1/2}^n\; dx dt\Bigg )\\
&=\sum_n \sum\limits_{a\leq x_j<x_{j+1}\leq b}\int_{t_n}^{t_{n+1}}\int_{x_j}^{x_{j+1}}\big ( \bar u _\Delta-u_\Delta\big )\big ( \varphi-\bar\varphi_{j+1/2}^n\big ) \; dx dt
\end{split}
\end{equation*}
using the fact that for any $[x_j, x_{j+1}]\times [t_n, t_{n+1}]$, we have $ \int_{t_n}^{t_{n+1}}\int_{x_j}^{x_{j+1}} \big ( \bar u _\Delta-u_\Delta\big ) \;dx\; dt=0$.}

Since $\varphi\in C_0^\infty(\R\times \R^+)$, there exists $C$ that depends only on $\Vert \dfrac{d\varphi}{dx}\Vert_{L^\infty(\R\times \R+)}$ such that
$$\bigg \vert \int_{x_j}^{x_{j+1}} \big (\bar u _\Delta-u_\Delta\big )\big ( \varphi-\bar\varphi\big ) \; dx dt\bigg \vert\leq \Delta t \Delta_{j+1/2}\; \Delta  \max\limits_{j\in \Z, n\leq N}\big ( \vert u_j^n\vert , \vert {\bar u}_{j+1/2}^n\vert \big )\leq \Delta t \Delta^2\max\limits_{j\in \Z, n\leq N}\big ( \vert u_j^n\vert , \vert {\bar u}_{j+1/2}^n\vert \big ) $$
and then,
$$\bigg \vert \int_0^T \int_a^b \big ({\bar u} _\Delta-u _\Delta\big ) \varphi \; dx dt\bigg \vert \leq C \; \Delta$$
and passing to the limit, $\bar u=u'$. Since a subsequence of $u _\Delta$ converges to $u$ in $L^2$, we have $\bar u=u'=u$.

The same method shows that $(u _\Delta^2)$ and $({\bar u} _\Delta^2)$ have the same weak-$\star$ limit. Let us show it is $u^2$. Since $C_0^\infty([a,b]\times [0,T]$ is dense in $L^1([a,b]\times [0,T]$), and since $u _\Delta^2$ is bounded independently of $\Delta$ and $\Delta t$, we can choose functions $\phi$ in $C_0^\infty([a,b]\times [0,T])$. This test function is bounded in $[a,b]\times [0,T]$ and then, we  have, at least for a subsequence,
$$\int_{a}^b\int_{0}^T \vert u-u _\Delta\vert^2 \; dx dt \rightarrow 0,$$
and then
$$
\int_a^b\int_{0}^T u _\Delta^2\; dx dt-2\int_a^b\int_{0}^T u _\Delta \, u\; dx dt+ \int_a^b\int_{0}^T u^2\; dx dt\rightarrow 0.$$
By the Cauchy-Schwarz inequality, $u\phi\in L^1([a,b]\times [0,T])$: the second term tends towards
$$\int_a^b\int_{0}^T u^2\; dxdt,$$ so that
$$\int_a^b\int_{0}^T u _\Delta^2\; dx dt- \int_a^b\int_{0}^T u^2\; dx dt\rightarrow 0.$$
and $u _\Delta^2\rightarrow u^2$ in $L^\infty$ weak-$\star$.

Last, again by the same argument for $\phi=1$, since $u _\Delta^2\rightarrow u^2$ in  $L^\infty$ weak-$\star$, we get
$$\int_a^b\int_{0}^T\vert \bar u _\Delta-u\vert^2 \; dx dt\rightarrow 0,$$
and finally
$$\int_a^b\int_{0}^T\vert \bar u _\Delta-u _\Delta\vert^2 \; dx dt\rightarrow 0.$$
Since $[a,b]\times [0,T]$ is bounded, $L^1([a,b]\times [0,T])\subset L^2([a,b]\times [0,T])$, we obtain
$$
\lim\limits_{ \Delta, \Delta t\rightarrow 0}\sum_{n=0}^N\Delta t_n \bigg [\sum_{j\in \Z}\Delta_{j+1/2}\bigg ( \vert u_{ j}^n-\bar{u} _{j+1/2}^{n}\vert + \vert u_{j+1}^n-\bar{u} _{j+1/2}^{n}\vert \bigg )\bigg ] =0$$
From this we get  \eqref{appendix:1} because 
$$\vert u_{ j}^n-u_{ j+1}^n\vert \leq \vert u_{ j}^n-\bar{u} _{j+1/2}^{n}\vert+\vert u_{ j+1}^n-\bar{u} _{j+1/2}^{n}\vert.$$
\end{proof}

Then we can proof proposition \ref{LxW}. We proceed the proof in several lemma.
\begin{lemma}
Under the conditions of proposition \ref{LxW}, for any $\varphi\in C_0^\infty(\R\times \R^+)$ we have 
\begin{equation*}
\begin{split}
\lim\limits_{\Delta t\rightarrow 0, \Delta\rightarrow 0} \sum\limits_{n=0}^\infty\sum\limits _{[x_j, x_{j+1}], j\in \Z}&\dfrac{\Delta_{j+1/2}}{6}\bigg ( \varphi_{j+1}^n(\bu_{j+1}^{n+1}-\bu_{j+1}^n)+4\varphi_{j+1/2}^n (\bu_{j+1/2}^{n+1}-\bu_{j+1/2}^n)+ \varphi_{j}^n(\bu_{j}^{n+1}-\bu_j^n)\bigg )\\&=
-\int_{\R\times \R^+} \dpar{\varphi}{t} u \; dx dt+\int_\R \varphi(x,0) u_0\; dx dt.\end{split}\end{equation*}
\end{lemma}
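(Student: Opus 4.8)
The statement is the standard "time-derivative term" part of the Lax–Wendroff argument, so the plan is to recognize the left-hand sum as a discrete space-time integral of $\varphi$ against a difference quotient of the piecewise-quadratic interpolant, and then push the mesh to zero. First I would rewrite the inner spatial sum using Simpson's formula backwards: since $R_{\bu}$ is quadratic on each $K_{j+1/2}$ and Simpson's rule is exact on quadratics, for each $n$ one has
\[
\frac{\Delta_{j+1/2}}{6}\Big(\varphi_{j+1}^n(\bu_{j+1}^{n+1}-\bu_{j+1}^n)+4\varphi_{j+1/2}^n(\bu_{j+1/2}^{n+1}-\bu_{j+1/2}^n)+\varphi_j^n(\bu_j^{n+1}-\bu_j^n)\Big)
=\int_{x_j}^{x_{j+1}}\varphi^h(x,t_n)\big(R_{\bu^{n+1}}-R_{\bu^{n}}\big)(x)\,dx + E_{j,n},
\]
where $\varphi^h$ is the quadratic interpolant of $\varphi$ at $x_j,x_{j+1/2},x_{j+1}$ and $E_{j,n}$ collects the error from replacing $\varphi$ by $\varphi^h$ inside Simpson's rule. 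Because $\varphi\in C_0^\infty$, $\|\varphi-\varphi^h\|_{L^\infty(K_{j+1/2})}\le C\Delta_{j+1/2}^2\|\varphi''\|_\infty$, and $|\bu_j^{n+1}-\bu_j^n|$ is controlled by $\Delta t_n$ times the Lipschitz bound \eqref{consistency:lipschitz}, so $\sum_{n}\sum_j |E_{j,n}|\le C\,\Delta^2\to 0$: this error is harmless. I would then also replace $\varphi^h$ by $\varphi$ itself in the integral at cost $O(\Delta^2)$, using the same interpolation estimate and the boundedness of $\bu_\Delta$ in $L^\infty$, so that up to vanishing terms the left-hand side equals $\sum_n \int_\R \varphi(x,t_n)\big(u_\Delta(x,t_{n+1})-u_\Delta(x,t_n)\big)\,dx$ (written for the scalar case, as in the excerpt).

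Next I would perform the classical Abel summation (summation by parts in $n$) on $\sum_n \int_\R \varphi(\cdot,t_n)\big(u_\Delta(\cdot,t_{n+1})-u_\Delta(\cdot,t_n)\big)\,dx$. This telescopes into $-\sum_{n\ge 1}\int_\R \big(\varphi(x,t_n)-\varphi(x,t_{n-1})\big)u_\Delta(x,t_n)\,dx - \int_\R \varphi(x,0)u_\Delta(x,0)\,dx$, the boundary term at $t=+\infty$ vanishing because $\varphi$ has compact support in time. Writing $\varphi(x,t_n)-\varphi(x,t_{n-1})=\Delta t_{n-1}\,\partial_t\varphi(x,t_n)+O(\Delta t_{n-1}^2)$ and summing, the first piece is a Riemann sum for $\int_{\R\times\R^+}\partial_t\varphi\, u_\Delta\,dx\,dt$; the $O(\Delta t^2)$ remainder summed over $n$ is $O(T\Delta t)\to 0$ since $\varphi$ is smooth and $u_\Delta$ is uniformly bounded on the (fixed, compact) support of $\varphi$. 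Finally I invoke the hypothesis that a subsequence of $u_\Delta$ converges to $u$ in $L^1_{loc}$ (equivalently in $L^2_{loc}$ on the support of $\varphi$, using the $L^\infty$ bound) to pass to the limit in the Riemann sum: $\int\!\!\int \partial_t\varphi\, u_\Delta \to \int\!\!\int \partial_t\varphi\, u$, and $\int_\R\varphi(x,0)u_\Delta(x,0)\to\int_\R\varphi(x,0)u_0$ provided the initial data are reconstructed consistently (which follows since $R_{\bu^0}$ converges to $u_0$ in $L^1_{loc}$). Collecting signs gives the claimed identity.

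The only genuinely delicate point is bookkeeping of the two layers of approximation error — the Simpson quadrature error from replacing $\varphi$ by its quadratic interpolant, and the time-difference error from the Taylor expansion of $\varphi$ in $t$ — and checking that both are summable to something that vanishes. This is where one needs the uniform $L^\infty$ bound on $\bu_j^n$ and $v_{j+1/2}^n$ together with the Lipschitz/shape-regularity assumptions \eqref{consistency}: the per-cell error is $O(\Delta_{j+1/2}^2)$ or $O(\Delta t_n^2)$, there are $O(1/\Delta)$ cells per unit length and $O(1/\Delta t)$ time steps, and $\varphi$'s compact support localizes the sum, so the totals are $O(\Delta)$ and $O(\Delta t)$ respectively. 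Everything else — the summation by parts, the Riemann-sum convergence, and the use of $L^1_{loc}$ strong convergence — is entirely routine. I would also remark that the argument is pointwise-in-construction identical in the system case, replacing absolute values by norms, exactly as the appendix already notes.
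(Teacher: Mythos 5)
Your overall strategy is sound, but it is a genuinely different route from the paper's. The paper never estimates any quadrature error: it uses the incremental form of \eqref{scheme:4}, $\delta\bu_{j+1/2}=\tfrac{3}{2}\overline{\delta\bu}_{j+1/2}-\tfrac{1}{4}\big(\delta\bu_j+\delta\bu_{j+1}\big)$, to split the Simpson-weighted sum \emph{exactly} into (I) the classical cell-average term $\sum_n\sum_j\Delta_{j+1/2}\varphi^n_{j+1/2}\overline{\delta\bu}_{j+1/2}$, whose limit is quoted from the standard finite-volume Lax--Wendroff argument, and (II) a remainder in which $\delta\bu_j$ is multiplied by $\Delta_{j+1/2}(\varphi_j-\varphi_{j+1/2})+\Delta_{j-1/2}(\varphi_j-\varphi_{j-1/2})=O(\Delta^2)$, which is then killed by the $L^\infty$ bound. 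Your version, which goes through the reconstruction $R_\bu$ and performs the Abel summation in time explicitly, is more self-contained (it does not lean on the citation for the classical term), but it creates precisely the error terms that the paper's algebraic splitting avoids, and those are where your write-up is loose.

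Two concrete points. (i) Sign: your telescoping is correct and yields $-\sum_{n\ge1}\int_\R\big(\varphi(\cdot,t_n)-\varphi(\cdot,t_{n-1})\big)u_\Delta(\cdot,t_n)\,dx-\int_\R\varphi(x,0)u_\Delta(x,0)\,dx$, hence in the limit $-\int_{\R\times\R^+}\dpar{\varphi}{t}u\,dx\,dt-\int_\R\varphi(x,0)u_0\,dx$, with a \emph{minus} on the initial term; ``collecting signs'' does not give the identity as stated, which carries a plus. (A sanity check with $u_\Delta$ constant in time shows the minus version is the correct limit, and it is also the one needed to conclude the weak formulation in Proposition \ref{LxW}, so the mismatch appears to be a sign slip in the printed statement --- but you cannot silently assert agreement; you must flag the discrepancy.) (ii) Error bookkeeping: replacing $\varphi$ by its nodal quadratic interpolant $\varphi^h$ changes nothing in the Simpson sum, since the two coincide at $x_j$, $x_{j+1/2}$, $x_{j+1}$; the true discrepancy between the Simpson sum and $\int_{K_{j+1/2}}\varphi\,(R_{\bu^{n+1}}-R_{\bu^{n}})\,dx$ is the quadrature error for the (quartic, resp. non-polynomial) integrand, of size $O(\Delta_{j+1/2}^3)\,\Vert R_{\bu^{n+1}}-R_{\bu^{n}}\Vert_\infty$ per cell, and its double sum over $j$ and $n$ only vanishes after you invoke \eqref{consistency:lipschitz} (and the Lipschitz flux in \eqref{schemedisc:1}) to get $\Vert R_{\bu^{n+1}}-R_{\bu^{n}}\Vert_\infty=O(\Delta t_n/\Delta)$; your stated total of $C\Delta^2$ is not justified as written. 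The same time-increment control (or Lemma \ref{weakBV} together with a CFL-type relation between $\Delta t$ and $\Delta$) is needed when you identify $\sum_{n\ge1}\int_\R\big(\varphi(\cdot,t_n)-\varphi(\cdot,t_{n-1})\big)u_\Delta(\cdot,t_n)\,dx$ with $\int\dpar{\varphi}{t}u_\Delta\,dx\,dt$, because the increment of $\varphi$ over $[t_{n-1},t_n]$ multiplies the value of $u_\Delta$ on $[t_n,t_{n+1})$. All of this is repairable, but as written these steps are gaps rather than routine bookkeeping, and they are exactly the points where your route departs from the paper's.
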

\begin{proof}
This is a simple adaptation of the classical proof, see for example \cite{MR1304494}. We have, using that
$$\delta u_{j+1/2}=\frac{3}{2}\overline{\delta u}_{j+1/2}-\frac{\delta u_j+\delta u_{j+1}}{4}$$
and the compactness of the support of $\varphi$, 
\begin{equation*}
\begin{split}
\sum\limits_{n=0}^\infty\sum\limits _{[x_j, x_{j+1}], j\in \Z}&\dfrac{\Delta_{j+1/2}}{6}\bigg ( \varphi_{j+1}^n(\bu_{j+1}^{n+1}-\bu_{j+1}^n)+4\varphi_{j+1/2}^n (\bu_{j+1/2}^{n+1}-\bu_{j+1/2}^n)+ \varphi_{j}^n(\bu_{j}^{n+1}-\bu_j^n)\bigg )\\&=
\underbrace{\sum\limits_{n=0}^\infty \sum\limits _{[x_j, x_{j+1}], j\in \Z}\Delta_{j+1/2} \varphi_{j+1/2}\overline{\delta u}_{j+1/2}}_{(I)} \\
&\qquad + \underbrace{\sum\limits_{n=0}^\infty
\sum\limits_{j\in \Z}\bigg ( \frac{\Delta_{j+1/2}}{6} \big ( \varphi_{j}-\varphi_{j+1/2}\big ) +\frac{\Delta_{j-1/2}}{6}\big ( \varphi_{j}-\varphi_{j-1/2}\big )\bigg )\delta u_j
}_{(II)}
\end{split}
\end{equation*}
where $\overline{\delta \bu}_{j+1/2}=\bar\bu_{j+1/2}^{n+1}-\bar\bu_{j+1/2}^{n}$ and $\delta \bu_{j} =\bu_j^{n+1}-\bu_j^n$.
The first part, $(I)$,  is the classical term, and under the condition of the lemma, converges to 
$$-\int_{\R\times \R^+} \dpar{\varphi}{t} u \; dx dt+\int_\R \varphi(x,0) u_0\; dx dt.$$
Since $\varphi\in C_0^\infty(\R\times \R^+)$, there exists $C$ that depends only on the $L^\infty$ norm of the first derivative of $\varphi$ such that the term $(II)$  can be bounded by
\begin{equation*}
\begin{split}
\bigg \vert \sum\limits_{n=0}^N\Delta t_n
\sum\limits_{j\in \Z}\bigg ( \frac{\Delta_{j+1/2}}{6} \big ( \varphi_{j}-\varphi_{j+1/2}\big ) &+\frac{\Delta_{j-1/2}}{6}\big ( \varphi_{j}-\varphi_{j-1/2}\big )\bigg )\delta u_j
\bigg \vert \leq C
\sum\limits_{n=0}^N \Delta t \;  \Delta
\sum\limits_{j\in \Z}\Delta_{j+1/2} \vert \delta u_j\vert \\
& \quad \leq C T(b-a) \Delta \max\limits_{j,p\in \N}\vert\bu_j^p\vert.
\end{split}
\end{equation*}
This tends to zero because $\max\limits_{j\in \Z,p\in \N}\vert\bu_j^p\vert$ is finite.
\end{proof}
\begin{lemma}
Under the assumptions of proposition \ref{LxW},
$$\lim\limits_{\Delta t, \Delta\rightarrow 0}\sum\limits_{n=0}^\infty\sum\limits _{[x_j, x_{j+1}], j\in \Z}\Delta t_n \varphi_{j+1/2} \delta_{j+1/2} \bbf=-\int_{\R\times \R^+} \dpar{\varphi}{x}f(u) \; dx dt.$$
\end{lemma}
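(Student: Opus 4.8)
The plan is to follow the classical Lax--Wendroff manipulation, the one novelty being that $\bbf$ is sampled at the point values $\bu_j^n$ whereas the only convergence at our disposal is that of the piecewise-quadratic reconstruction $u_\Delta=R_{u_\Delta}$; the discrepancy between the two is precisely what the weak-BV estimate \eqref{appendix:1} of Lemma~\ref{weakBV} is built to absorb.

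\textbf{Step 1: summation by parts.} Since $\varphi$ has compact support, only finitely many $n$ and $j$ contribute, and for each $n$
\[
\sum_{j\in\Z}\varphi_{j+1/2}^n\,\delta_{j+1/2}\bbf
=\sum_{j\in\Z}\varphi_{j+1/2}^n\bigl(\bbf(\bu_{j+1}^n)-\bbf(\bu_j^n)\bigr)
=-\sum_{j\in\Z}\bbf(\bu_j^n)\bigl(\varphi_{j+1/2}^n-\varphi_{j-1/2}^n\bigr),
\]
where $\varphi_{j+1/2}^n=\varphi(x_{j+1/2},t_n)$ (the same reduction works if $\varphi_{j+1/2}^n$ denotes the cell average, which differs from the point value by $O(\Delta+\Delta t)$). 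Using the smoothness of $\varphi$, $\varphi_{j+1/2}^n-\varphi_{j-1/2}^n=\dpar{\varphi}{x}(x_j,t_n)\,\Delta_j+O(\Delta^2)$ with $\Delta_j=\tfrac12(\Delta_{j+1/2}+\Delta_{j-1/2})$; since the $\bu_j^n$, hence the $\bbf(\bu_j^n)$, are uniformly bounded and $\varphi$ has compact support, the remainder, summed over the $O(1/\Delta)$ active indices and over $n$ with $\sum_n\Delta t_n\le T$, contributes $O(\Delta)\to 0$. So it suffices to show
\[
\Sigma:=\sum_{n}\Delta t_n\sum_{j\in\Z}\bbf(\bu_j^n)\,\dpar{\varphi}{x}(x_j,t_n)\,\Delta_j\ \longrightarrow\ \int_0^{\infty}\!\!\int_\R \dpar{\varphi}{x}\,f(u)\,dx\,dt .
\]

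\textbf{Step 2: $\Sigma$ is a sum of cellwise trapezoidal quadratures.} Because $\Delta_j=\tfrac12(\Delta_{j+1/2}+\Delta_{j-1/2})$, reindexing one half of the sum turns $\Sigma$ into
\[
\Sigma=\sum_n\Delta t_n\sum_{[x_j,x_{j+1}]}\frac{\Delta_{j+1/2}}{2}\Bigl(\varphi_x(x_j,t_n)\bbf(\bu_j^n)+\varphi_x(x_{j+1},t_n)\bbf(\bu_{j+1}^n)\Bigr),
\]
i.e.\ on each $K_{j+1/2}$ it is exactly the trapezoidal rule for $\int_{K_{j+1/2}}\varphi_x(\cdot,t_n)\,\bbf\bigl(R_{u^n_\Delta}\bigr)\,dx$. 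Assuming $\bbf$ Lipschitz on the (bounded) set of numerical values, $\bbf\circ R_{u^n_\Delta}$ oscillates over $K_{j+1/2}$ by at most a constant times the local increment $V_{j+1/2}^n:=|\bu_j^n-\bar\bu_{j+1/2}^n|+|\bu_{j+1}^n-\bar\bu_{j+1/2}^n|+|\bu_j^n-\bu_{j+1}^n|$ (recall $\bu_{j+1/2}$ is a fixed linear combination of these three), while $\varphi_x$ oscillates by $O(\Delta)$; hence the trapezoidal error on that cell is bounded by $C\Delta_{j+1/2}\bigl(\Delta+V_{j+1/2}^n\bigr)$. Summing, using $\sum_n\Delta t_n\le T$ for the $\Delta$-part and, crucially, the weak-BV limit \eqref{appendix:1} of Lemma~\ref{weakBV} for $\sum_n\Delta t_n\sum_j\Delta_{j+1/2}V_{j+1/2}^n$, we get $\Sigma-\sum_n\Delta t_n\!\int_\R \varphi_x(x,t_n)\,\bbf\bigl(R_{u^n_\Delta}(x)\bigr)\,dx\to 0$. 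Replacing $\Delta t_n\,\varphi_x(\cdot,t_n)$ by $\int_{t_n}^{t_{n+1}}\varphi_x(\cdot,t)\,dt$ costs a further $O(\Delta t)\to0$, so $\Sigma$ and $\int_0^\infty\!\int_\R \varphi_x\,\bbf(u_\Delta)\,dx\,dt$ share the same limit.

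\textbf{Step 3: pass to the limit and conclude.} Extract a subsequence along which $u_\Delta\to u$ a.e.\ on $\operatorname{supp}\varphi$ (possible since $u_\Delta\to u$ in $L^2_{loc}$); uniform boundedness and dominated convergence give $\bbf(u_\Delta)\to \bbf(u)$ in $L^1_{loc}$, and since $\varphi_x\in C_0^\infty$ this yields $\int_0^\infty\!\int_\R \varphi_x\,\bbf(u_\Delta)\to\int_{\R\times\R^+}\dpar{\varphi}{x}f(u)\,dx\,dt$. Combining Steps 1--3 (and accounting for the sign produced by the summation by parts in Step~1) gives the asserted identity. The only genuinely delicate point is Step~2: one must resist estimating the quadrature error through derivatives of $R_{u^n_\Delta}$ (which blow up like $\Delta^{-1}$ near under-resolved data) and instead express it through the cell increments $V_{j+1/2}^n$, so that the already-established bound \eqref{appendix:1} applies verbatim — this is also where the precise definition of $\Delta_j$, hence mesh regularity, is used.
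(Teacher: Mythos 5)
Your proof is correct, and its opening move is exactly the paper's: Abel summation turning $\sum_j\varphi_{j+1/2}\,\delta_{j+1/2}\bbf$ into $-\sum_j\bbf(\bu_j)\big(\varphi_{j+1/2}-\varphi_{j-1/2}\big)$. Where you diverge is in how the limit is then justified. The paper disposes of the rest in one line, ``boundedness of the solution and dominated convergence,'' implicitly identifying the summed expression with $-\int\varphi_x\,\bbf$ evaluated on a piecewise-constant sampling of the nodal values and leaving unaddressed the mismatch between those nodal values and the reconstruction $u_\Delta$ whose $L^1$/$L^2$ convergence is the only hypothesis available. You make that bookkeeping explicit: you rewrite the node sum as the cellwise trapezoidal quadrature of $\int_{K_{j+1/2}}\varphi_x\,\bbf\big(R_{u^n_\Delta}\big)\,dx$, bound the quadrature error by the local increments $V^n_{j+1/2}$ (using that $R_{u^n_\Delta}-\bar\bu_{j+1/2}$ is a bounded linear combination of $\bu_j-\bar\bu_{j+1/2}$ and $\bu_{j+1}-\bar\bu_{j+1/2}$), and let the weak-BV estimate \eqref{appendix:1} of Lemma~\ref{weakBV} absorb it before passing to the limit in $\int\varphi_x\,\bbf(u_\Delta)$ by a.e.\ convergence of a subsequence plus dominated convergence. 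This buys a genuinely rigorous treatment of the only delicate point (nodal values versus reconstruction) at the modest cost of invoking Lemma~\ref{weakBV} for this term too, which is legitimate under the hypotheses of Proposition~\ref{LxW} since the uniform $L^\infty$ bound upgrades the assumed $L^1$ convergence to the $L^2$ convergence that lemma requires; the paper reserves that lemma for the $S_n$ term and keeps the present argument informal. One cosmetic remark: your count of ``$O(1/\Delta)$ active indices'' in Step~1 presumes quasi-uniformity, which the stated regularity (bounded ratio of adjacent cells) does not literally give; the cleaner bound is that each nodal remainder is $O(\Delta\,\Delta_j)$ and $\sum_j\Delta_j$ over the support of $\varphi$ is bounded, which yields the same $O(\Delta)$ conclusion.
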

\begin{proof}
This is again a simple adaptation of the classical proof since $\delta_{j+1/2}f=f(u_{j+1})-f(u_j)$. We have {
$$\sum\limits _{[x_j, x_{j+1}], j\in \Z}\varphi_{j+1/2} \delta_{j+1/2} \bbf=-\sum\limits_{ j\in \Z}\bbf(\bu_j)\big ( \varphi_{j+1/2}-\varphi_{j-1/2}\big ) ,$$}
Then using the boundedness of the solution and Lebesgue dominated convergence theorem, we get the result.
\end{proof}

Then we have
\begin{lemma}
Under the conditions of proposition \ref{LxW}, we have
$$\lim\limits_{\Delta t, \Delta \rightarrow 0}\bigg (\sum\limits_{n\in \N}\sum\limits_{j\in \Z} \delta_{j}^{n+1/2} \bu
\bigg \{\Delta_{j+1/2}\big ( \varphi_j-\varphi_{j+1/2}\big ) +\Delta_{j-1/2}\big (\varphi_j-\varphi_{j-1/2}\big )\bigg \}\Bigg )=0$$
\end{lemma}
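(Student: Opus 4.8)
The plan is to bound the sum directly using the Lipschitz-type control on the increments coming from the scheme, together with the weak-BV estimate of Lemma \ref{weakBV}. First I would observe that, since $\varphi\in C_0^\infty(\R\times\R^+)$, there is a constant $C$ depending only on $\Vert\partial\varphi/\partial x\Vert_{L^\infty}$ such that $\vert\varphi_j-\varphi_{j+1/2}\vert\leq C\,\Delta_{j+1/2}$ and $\vert\varphi_j-\varphi_{j-1/2}\vert\leq C\,\Delta_{j-1/2}$; hence the quantity in braces is bounded by $C(\Delta_{j+1/2}^2+\Delta_{j-1/2}^2)\leq C\,\Delta\,\Delta_j$ using mesh regularity. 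This already gives a factor $\Delta$ to play with.

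Next I would express $\delta_j^{n+1/2}\bu$ through the scheme. By \eqref{schemedisc:3} we have $\Delta_j\,\delta_j^{n+1/2}\bu=-\Delta t_n\,\delta_x\bu_j$, and from the bounds established right after \eqref{schemedisc:3} (using \eqref{consistency:lipschitz}, the $C^1$ character of $\Psi$ and $\Psi^{-1}$, and the $L^\infty$ bounds assumed in Proposition \ref{LxW}) one has $\Vert\delta_x\bu_j\Vert\leq C\sum_{l=-p}^{p}\Vert\bu_{j+l}-\bu_{j+1+l}\Vert$, and more precisely each $\Vert\bu_{j+l}-\bu_{j+1+l}\Vert$ is controlled by the corresponding point-to-average differences $\Vert\bu_{j+l}-\bar\bu_{j+l+1/2}\Vert+\Vert\bu_{j+1+l}-\bar\bu_{j+l+1/2}\Vert$ via the polynomial reconstruction, exactly as in the last inequality of the proof of Lemma \ref{weakBV}. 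Therefore $\Vert\delta_j^{n+1/2}\bu\Vert\leq C\,\frac{\Delta t_n}{\Delta_j}\sum_{l}\big(\Vert\bu_{j+l}^n-\bar\bu_{j+l+1/2}^n\Vert+\Vert\bu_{j+l+1}^n-\bar\bu_{j+l+1/2}^n\Vert\big)$.

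Combining the two estimates, the absolute value of the sum is bounded by
\[
C\,\Delta\sum_{n}\Delta t_n\sum_{j}\sum_{l=-p}^{p}\Big(\Vert\bu_{j+l}^n-\bar\bu_{j+l+1/2}^n\Vert+\Vert\bu_{j+l+1}^n-\bar\bu_{j+l+1/2}^n\Vert\Big),
\]
where only indices with stencil meeting the compact support of $\varphi$ contribute, so the spatial sum is effectively finite. Shifting the index $l$ and using mesh regularity to absorb the finitely many shifts into the constant, this is bounded by $C\,\Delta$ times the quantity appearing on the left-hand side of \eqref{appendix:1} in Lemma \ref{weakBV}, which is itself bounded (it converges to $0$, in particular it stays bounded, and in fact one gets an extra vanishing factor here). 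Hence the whole expression is $O(\Delta)\to 0$, which is the claim. I expect the only delicate point to be bookkeeping the index shifts and invoking mesh regularity consistently so that the constants stay uniform — the analytic content is entirely supplied by Lemma \ref{weakBV} and the already-established increment bounds, so there is no real obstacle beyond this routine accounting.
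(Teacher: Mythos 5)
Your argument is essentially the paper's own proof: bound the bracket by $C\,\Delta\,(\Delta_{j+1/2}+\Delta_{j-1/2})$ using the smoothness of $\varphi$, use the update relation $\delta_j^{n+1/2}\bu=-\tfrac{\Delta t_n}{\Delta_j}\delta_x\bu_j$ together with the Lipschitz continuity of the fluctuations and the regularity of $\Psi$ to reduce everything to point-versus-average differences, and conclude with Lemma \ref{weakBV}. One small bookkeeping remark: after mesh regularity is used to trade $\Delta$ for $\Delta_{j+1/2}$, your bound is $C$ times (not $C\,\Delta$ times) the left-hand side of \eqref{appendix:1}, so there is no ``extra vanishing factor,'' but since that quantity tends to zero the conclusion is unchanged.
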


\begin{proof}
Since $\varphi\in C_0^\infty(\R\times \R^+)$, there exists $C$ that depends only on the first derivative of $\varphi$ such that
$$\vert \Delta_{j+1/2}\big ( \varphi_j-\varphi_{j+1/2}\big ) +\Delta_{j-1/2}\big (\varphi_j-\varphi_{j-1/2}\big )\vert \leq C  \Delta\; (\Delta_{j+1/2}+\Delta_{j-1/2}).$$
Then using \eqref{schemedisc:3}, we see that 
$$\delta_j^{n+1/2}:=\bu_j^{n+1}-\bu_j^n=-\dfrac{\Delta t_n}{\Delta_j } \delta_x\bu_{j},$$
so that 
\begin{equation*}
\begin{split}
\Vert \sum\limits_{j\in \Z} \delta_{j}^{n+1/2} \bu
\bigg \{\Delta_{j+1/2}\big ( \varphi_j-\varphi_{j+1/2}\big ) &+\Delta_{j-1/2}\big (\varphi_j-\varphi_{j-1/2}\big )\bigg \}\Bigg )\Vert 
\leq C  \Delta t_n\sum\limits_{j\in \Z} \Vert \delta_{x}^{n+1/2} \bu\Vert \Delta_j\\
&=C  \Delta \Delta t_n\sum\limits_{j\in \Z}   \Vert \delta_x\bu_{j}\Vert=C\Delta \sum\limits_{j\in Z}\sum\limits_{l=-p}^{l=p}\vert \bu_{j+l}-\bar\bu_{j+l+1/2}\vert 
\end{split}
\end{equation*}
using the Lipschitz continuity of the fluctuations and the regularity of the transformation $\bv\mapsto \bu$ together with the boundedness of the solution.
Then, from lemma \ref{weakBV}, we see that the results holds true.
\end{proof}

Then ends the proof of proposition \ref{LxW}.
}

{\section{Linear stability analysis}\label{appendix:linearstability}

The scheme writes, setting $\lambda=a\tfrac{\Delta t}{\Delta x}$ and assuming $a>0$,
\begin{equation*}
\begin{split}
u_{j}^{n+1}&=u^n_j-2\lambda \delta_ju^n\\
\bar u_{j+1/2}^{n+1}&=\bar u_{j+1/2}^n-\lambda \big ( u_{j+1}^n-u_j^n\big )
\end{split}
\end{equation*}
on  with periodicity $1$. We set $\Delta x=\tfrac{1}{N}$
It is more convenient to work with point values only, and we will use the form:
\begin{equation*}
\begin{split}
u_{j}^{n+1}&=u^n_j-2\lambda \delta_ju^n\\
u_{j+1/2}^{n+1}&=u_{j+1/2}^n -\lambda \bigg ( \frac{3}{2}\big ( u_{j+1}^n-u_j^n\big )-\frac{1}{4}\big ( \delta_j u^n+\delta_{j+1}u^n\big )\bigg ).
\end{split}
\end{equation*}
We perform a linear stability analysis by Fourier analysis. What is not completely standard is that the grid points do not play the same role. For ease of notations, we double the indices, this avoids to  have half integer in the Fourier analysis. In other points, the quantities $u_j$ associated to the grid points $x_j$ are denoted by $u_{2j}$: this will be the even terms. Those associated to the intervals $[x_j,x_{j+1}]$, i.e. $\bar u_{j+1/2}$ and $u_{j+1/2}$ will be denoted as $\bar u_{2j+1}$ and $u_{2j+1}$, they are the odd  terms, so that we use
{\color{green}\begin{equation}\label{stab:forward}
\begin{split}
u_{2j}^{n+1}&=u^n_j-2\lambda \delta_{2j}u^n\\
u_{2j+1}^{n+1}&=u_{2j+1}^n -\lambda \bigg ( \frac{3}{2}\big ( u_{2j+2}^n-u_{2j}^n\big )-\frac{1}{4}\big ( \delta_{2j}u^n+\delta_{2j+2}u^n\big )\bigg ).
\end{split}
\end{equation} }
We have the Parseval equality, 
$$\frac{1}{2N}\sum\limits_{0}^{2N-1} u_j^2=\sum_{k=0}^{2N-1} \vert \hat{u}(k)\vert^2,$$
with
$$\hat{u}(k)=\frac{1}{2N}\sum\limits_{j=0}^{2N-1} u_je^{2i\pi\frac{kj}{2N}}=\frac{1}{2}\big ( \hat{u}_o(k)+\hat{u}_e(k)\big )$$
with, setting $\omega=e^{\frac{i\pi}{N}}$
\begin{equation*}
\hat{u}_o(k)=\frac{1}{N}\sum\limits_{j=0}^{N-1} u_{2j+1} \omega^{(2j+1)k}, \quad
\hat{u}_e(k)=\frac{1}{N}\sum\limits_{j=0}^{N-1} u_{2j} \omega^{(2j)k}
\end{equation*}
The usual shift operator $[S(u)]_j=u_{j+1}$ gives:
\begin{equation*}
\hat{S(u)}_o=\omega^{-k} \hat{u}_e, \quad \hat{S(u)}_e=\omega^{-k} \hat{u}_o.
\end{equation*} 
Using this, we see that the Euler forward method \eqref{stab:forward} gives
\begin{equation}\label{stab:2}
\begin{pmatrix}
\hat{u}_o^{n+1}\\
\hat{u}_e^{n+1}\end{pmatrix}=\bigg ( \text{Id}-\lambda H\bigg ) \begin{pmatrix}
\hat{u}_o^{n}\\
\hat{u}_e^{n}\end{pmatrix}
\end{equation}
with
$$H_1(k)=\begin{pmatrix}
\frac{1+\omega^{2k}}{4}& \frac{5\omega^{-k}+7\omega^k}{4}\\
0& 2\big (1-\omega^k\big )\end{pmatrix}
$$ for the first order in space scheme,
$$H_2(k)=
\begin{pmatrix}
\frac{1+\omega^{2k}}{2} & \frac{9}{8}\omega^{-k}-2\omega^{-k}-\frac{\omega^{3k}}{8}\\
0& 2\big (1-\omega^k\big )\end{pmatrix}
$$ for the second order scheme and
$$H_3(k)
=
\begin{pmatrix}
\frac{\omega^{-2k}}{12}+\frac{1}{24}+\frac{\omega^{2k}}{12}-\frac{\omega^{4k}}{24} &
\frac{31}{24}\omega^{-k}-\frac{3}{2}\omega^k+\frac{\omega^{2k}}{12}+\frac{5}{24}\omega^{3k}\\
0& 2\big (1-\omega^k\big )\end{pmatrix}
$$ for the third order in time space.

Combined with the RK time stepping we end up with an update of the form
$$\begin{pmatrix}
\hat{u}_o^{n+1}\\
\hat{u}_e^{n+1}\end{pmatrix}=G_k\begin{pmatrix}
\hat{u}_o^{n}\\
\hat{u}_e^{n}\end{pmatrix}$$
and writing $$G_k=\begin{pmatrix}\alpha_k & \beta_k\\
\gamma_k & \delta_k \end{pmatrix}$$ we end up with
\begin{equation*}
\begin{split}
\hat{u}_o^{n+1}&=\alpha_k \hat{u}_o^{n}+\beta_k \hat{u}_e^{n}\\
\hat{u}_e^{n+1}&=\gamma_k \hat{u}_o^{n}+\delta_k \hat{u}_e^{n}\\
\end{split}
\end{equation*}
so that
$$\hat{u}^{n+1}=\frac{\alpha_k+\gamma_k}{2}\hat{u}_o^n+\frac{\beta_k+\delta_k}{2}\hat{u}_e^n,$$
from which we get
$$\vert \hat{u}^{n+1}\vert^2=\frac{1}{4}\overline{  \begin{pmatrix} \hat{u}_o^n & \hat{u}_e^n\end{pmatrix} } M_k
\begin{pmatrix} \hat{u}_o^n \\ \hat{u}_e^n\end{pmatrix}
$$
with
$$M_k= \frac{1}{4}\begin{pmatrix} \vert \alpha_k+\gamma_k\vert^2 & \big(\alpha_k+\gamma_k\big )\overline{ \big (\beta_k+\delta_k \big )}\\

\overline{ \big(\alpha_k+\gamma_k\big )} { \big (\beta_k+\delta_k \big )} & \vert  \beta_k+\delta_k \vert^2\end{pmatrix}
$$
We have stability if the spectral radius of these matrices is always $\leq 1$, and we immediately see that
$$\rho(M_k)=\frac{1}{4}\bigg ( \vert \alpha_k+\gamma_k\vert^2+\vert  \beta_k+\delta_k \vert^2\bigg ).$$

After calculations, we see that the stability limits are:
\begin{itemize}
\item First order scheme, $\vert \lambda\vert\leq 0.92$,
\item Second order scheme, $\vert\lambda\vert\leq 0.6$,
\item Third order scheme, $\vert\lambda\vert\leq 0.5$.
\end{itemize}
}
\section{Some numerical results on irregular meshes}\label{sec:irreg}
In order to support the theoretical analysis of the method, we have applied it  on irregular meshes. The goal is to show that even here, one \remiIII{gets} convergence of the solution to a weak solution that appears to be the right one. Since we use the same schemes, there is no hope to get anything but first order accuracy. Accuracy on irregular meshes will be the topic of future work.
The mesh is defined by: for $0\leq i\leq N$, 
$$ y_0=0, \quad y_{i+1}=y_i+\Delta y, \quad \Delta y=\dfrac{1+\epsilon_i/2}{N}$$
and 
$$\epsilon_0=-1, \epsilon_{i+1}=-\epsilon_i.$$
Then we define the actual mesh by
$$x_i=\dfrac{y_i}{y_N}.$$

On the Sod problem, with $N=10000$, we get the results of the figure \ref{sod:irregular}. 
\begin{figure}[h]
\subfigure[$\rho$]{\includegraphics[width=0.45\textwidth]{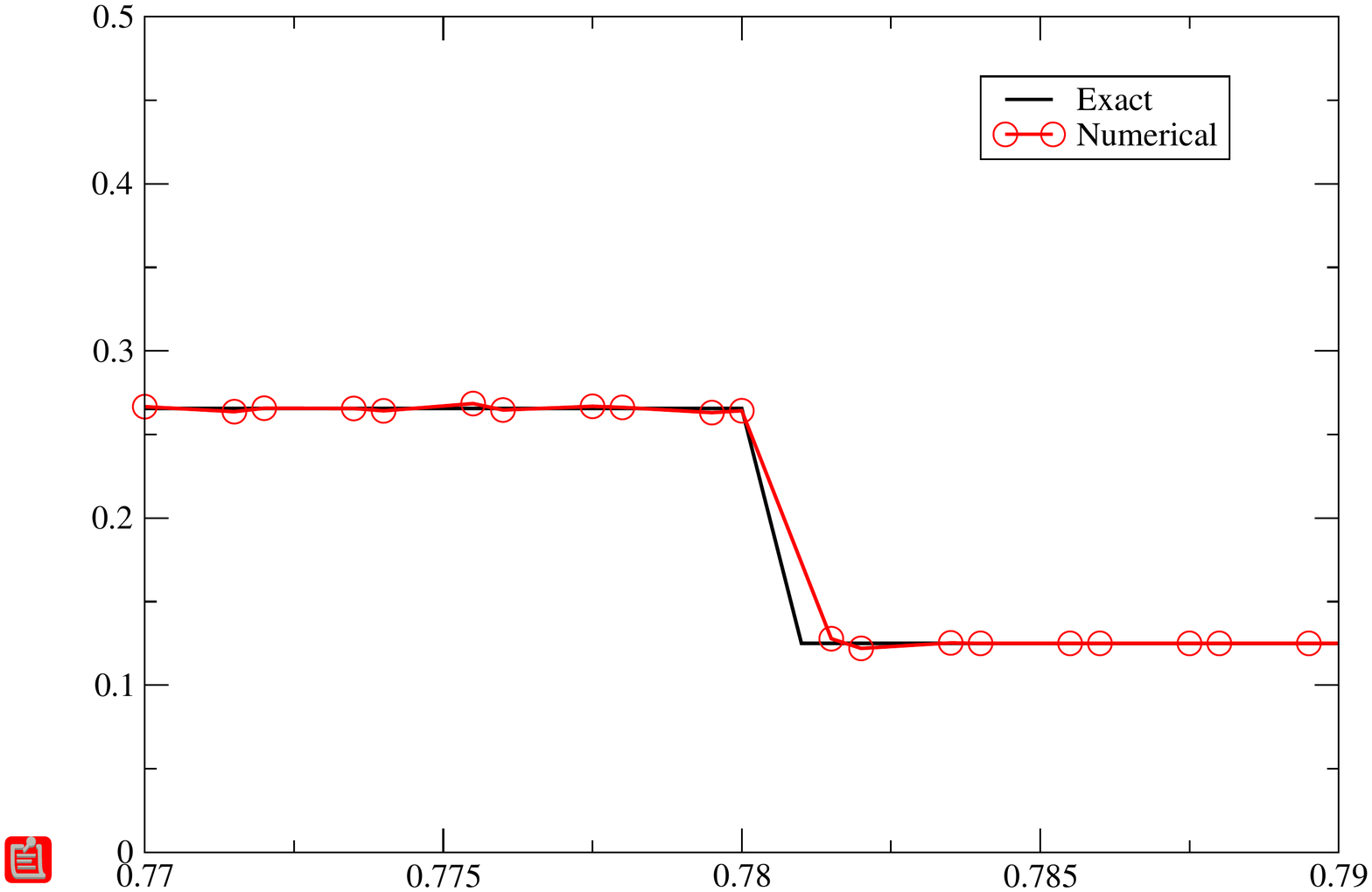}}
\subfigure[$p$]{\includegraphics[width=0.45\textwidth]{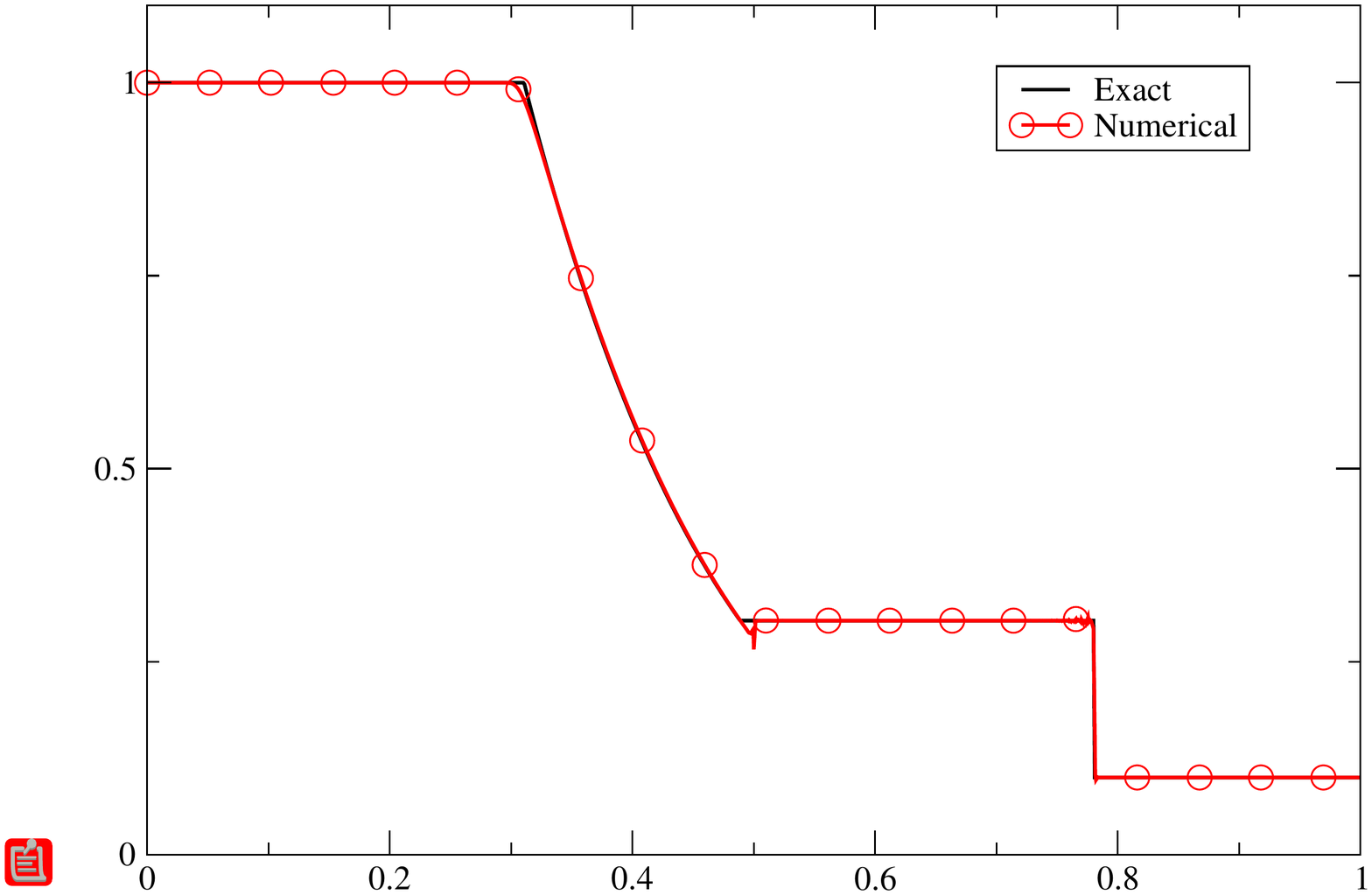}}
\subfigure[$u$]{\includegraphics[width=0.45\textwidth]{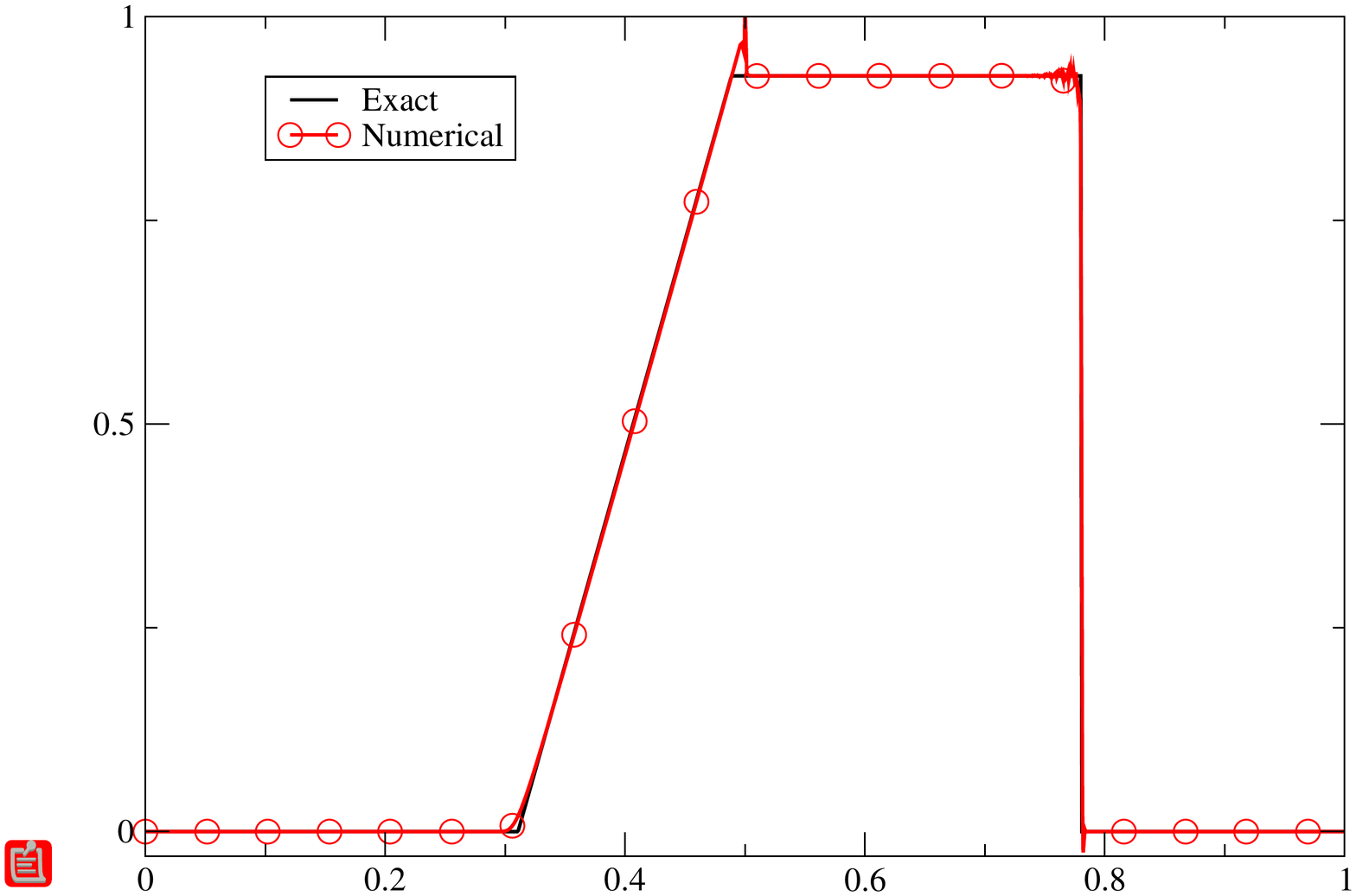}}
\subfigure[$s$]{\includegraphics[width=0.45\textwidth]{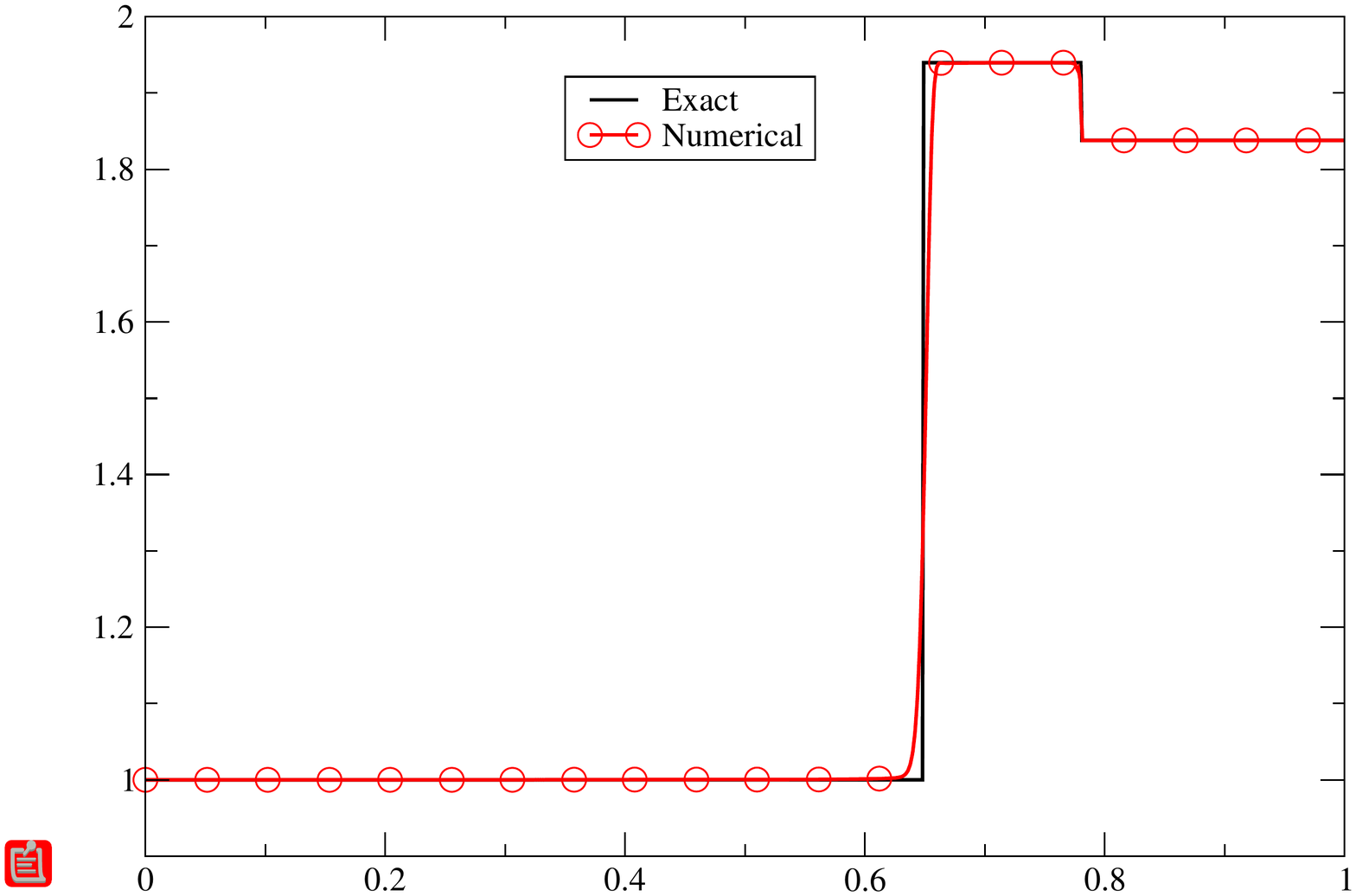}}
\caption{\label{sod:irregular} Plot of the density, pressure, velocity and the pressure. This is obtained with the "third order" with MOOD test on the density and the pressure.}
\end{figure}
On figure \ref{sod:irregular:zoom}, we show a zoom of the density around the shock wave. The discretisation points as well as the numerical and exact solution are shown.
\begin{figure}[h]
\begin{center}
{\includegraphics[width=0.45\textwidth]{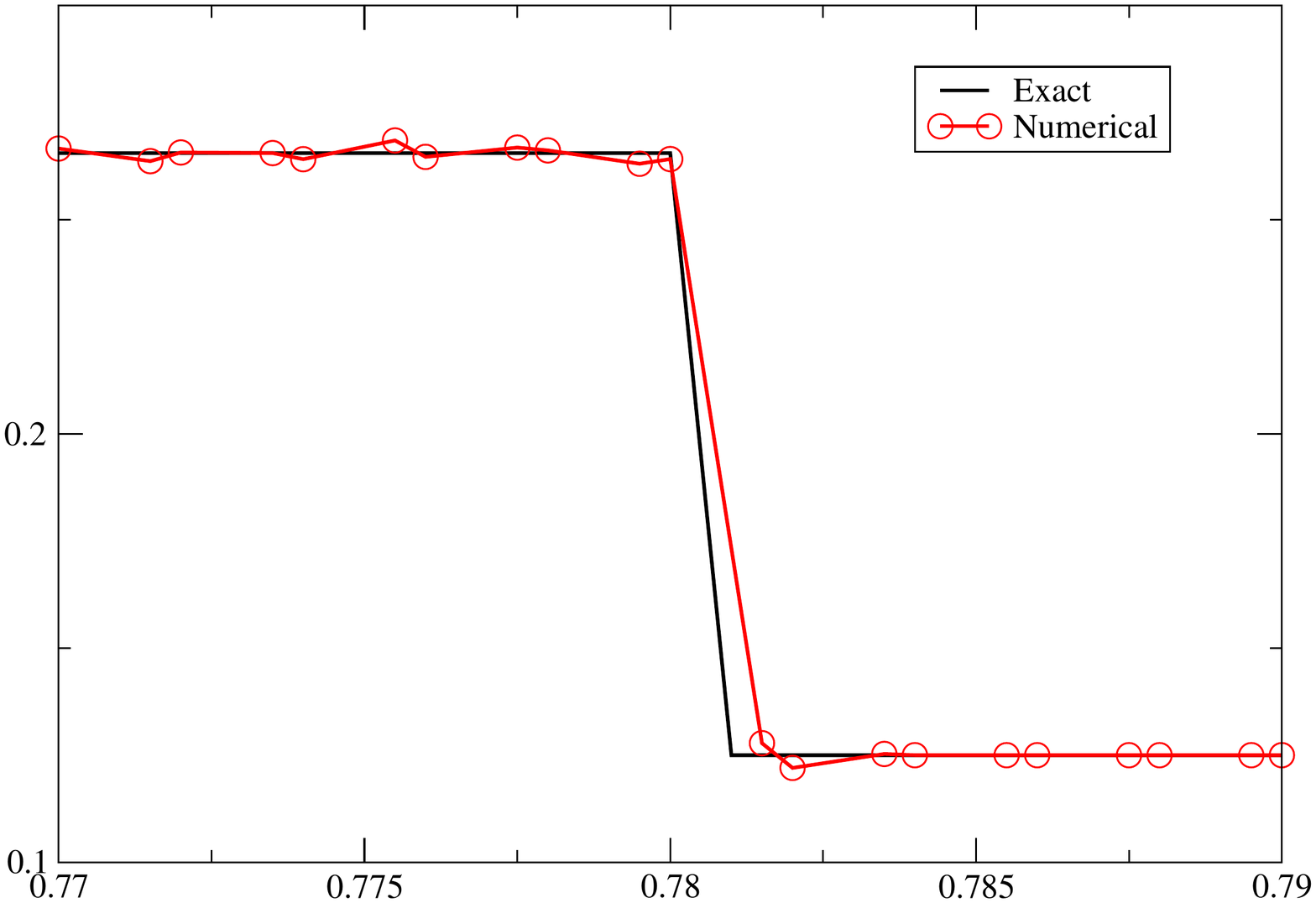}}
\end{center}
\caption{\label{sod:irregular:zoom} Zoom of the density around the shock wave.}
\end{figure}
\end{document}